\documentclass[12pt]{amsart}

\usepackage[T1]{fontenc}
\usepackage[utf8]{inputenc}
\usepackage{lmodern}
\usepackage[a4paper,margin=1in]{geometry}
\usepackage{amsmath,amssymb,amsthm,mathtools}
\usepackage{enumitem}
\usepackage{tikz}
\usepackage{float}
\usepackage{graphicx}
\usepackage[colorlinks=true,linkcolor=blue,citecolor=blue,urlcolor=blue]{hyperref}
\usepackage{microtype}
\hypersetup{
 pdfauthor={Rongli Huang and Qinfeng Jiang},
 pdftitle={Sharp Boundary Recession Criteria for the Special Lagrangian Curvature Potential Equation},
 pdfsubject={Boundary second derivative estimates and sharp recession criteria},
 pdfkeywords={special Lagrangian curvature potential equation, mixed recession compatibility, boundary Hessian estimate}
}
\allowdisplaybreaks

\numberwithin{equation}{section}

\newtheorem{theorem}{Theorem}[section]
\newtheorem{proposition}[theorem]{Proposition}
\newtheorem{lemma}[theorem]{Lemma}
\newtheorem{corollary}[theorem]{Corollary}
\theoremstyle{definition}
\newtheorem{definition}[theorem]{Definition}
\newtheorem{example}[theorem]{Example}
\theoremstyle{remark}
\newtheorem{remark}[theorem]{Remark}
\numberwithin{figure}{section}

\newcommand{\R}{\mathbb R}
\newcommand{\Sym}{\operatorname{Sym}}
\newcommand{\tr}{\operatorname{tr}}
\newcommand{\diag}{\operatorname{diag}}
\newcommand{\cJ}{\mathcal J}
\newcommand{\cL}{\mathcal L}
\newcommand{\cR}{\mathcal R}
\newcommand{\Om}{\Omega}
\newcommand{\eps}{\varepsilon}
\newcommand{\II}{\mathrm{II}}
\newcommand{\ip}[2]{\left\langle #1,#2\right\rangle}

\title[Sharp Boundary Recession Criteria]{Sharp Boundary Recession Criteria for the Special Lagrangian Curvature Potential Equation}

\author[R. Huang]{Rongli Huang}
\address{School of Mathematics and Statistics,
	Guangxi Normal University, Guilin, Guangxi, 541004, China}
\email{ronglihuangmath@gxnu.edu.cn}

\thanks{The first author was supported by Guangxi Natural Science Foundation (No.2026GXNSFDA00640012)}

\author[Q. Jiang]{Qinfeng Jiang}
\address{School of Mathematical Sciences, Beijing Normal University, Beijing 100875, China}
\email{202531130031@mail.bnu.edu.cn}

\subjclass[2020]{35J60, 35B45, 53A10}
\keywords{Special Lagrangian curvature potential equation; mixed recession compatibility; boundary Hessian estimate; tangent cone at infinity; curvature blow-up}

\begin{document}

\begin{abstract}
We establish boundary second derivative estimates for convex graphical solutions of the special Lagrangian curvature potential equation.  Since the curvature matrix depends on both $Du$ and $D^2u$, a phase subsolution alone does not provide the full linearized separation needed for the mixed derivative estimate.  We introduce a mixed recession compatibility condition imposed only on doubly degenerate level jets.  It yields a uniform mixed derivative bound and is sharp within the class of fixed smooth zero-order barriers considered here.  For the double-normal derivative, an exact complex Schur-complement identity gives
\[
 u_{\nu\nu}=\beta+\alpha\cot\delta,
 \qquad 1\leq\alpha\leq C,
 \qquad |\beta|\leq C,
\]
where $\alpha$ and $\beta$ are explicit Schur-complement coefficients,
$\delta$ is the actual boundary limiting-phase gap, and $C$ depends only on
uniform bounds for the gradient and the mixed boundary derivatives.  Thus
curvature blows up if and only if this gap collapses, with optimal rate
$\delta^{-1}$.  Smooth radial solutions attain the rate, while a rank-loss
model shows that a strict lower subsolution need not force strict convexity.
\end{abstract}

\maketitle

\section{Introduction and main results}\label{sec:intro}

Let $\Om\subset\R^n$, $n\geq2$, be a bounded smooth domain and let
\[
 \Gamma_u=\{(x,u(x)):x\in\Om\}\subset\R^{n+1}
\]
be the graph of a function $u$.  We denote by
$\kappa_1[u],\ldots,\kappa_n[u]$ the principal curvatures of $\Gamma_u$ with respect to the upward unit normal.  The constant phase special Lagrangian curvature potential equation is
\begin{equation}\label{eq:slc-intro}
 \sum_{i=1}^n\arctan\kappa_i[u]=\theta,
 \qquad 0<\theta<\frac{n\pi}{2}.
\end{equation}
The equation is the graph-curvature analogue of the classical special Lagrangian equation
\[
 \sum_{i=1}^n\arctan\lambda_i(D^2u)=\Theta.
\]
The latter originates in calibrated geometry \cite{HarveyLawson}, and the geometric notion of special Lagrangian curvature was developed by Smith \cite{Smith}.  The analytical distinction between the two equations is important here: the principal curvatures of a graph depend on both $D^2u$ and $Du$.

Let
\begin{equation*}
 w(p)=\sqrt{1+|p|^2},\qquad
 B(p)=(I+p\otimes p)^{-1/2},\qquad
 A[u]=\frac1{w(Du)}B(Du)D^2uB(Du).
\end{equation*}
The principal curvatures of $\Gamma_u$ are the eigenvalues of $A[u]$.  With
\[
 F(M)=\sum_{i=1}^n\arctan\lambda_i(M),
\]
equation \eqref{eq:slc-intro} becomes $F(A[u])=\theta$.  Throughout the paper we work on the convex branch $D^2u\geq0$.  On the positive cone $F$ is elliptic and concave in the curvature matrix $A$, but the composite map
\[
 (p,X)\longmapsto F\!\left(\frac1{w(p)}B(p)XB(p)\right)
\]
is not jointly concave.  Consequently, concavity in the Hessian variable alone does not yield the mixed boundary estimate for this gradient-dependent equation.

The Dirichlet theory for graph curvature equations goes back to the Caffarelli--Nirenberg--Spruck program \cite{CNSV}; related boundary and curvature estimates appear in \cite{Trudinger1990,Ivochkina,ShengUrbasWang,GuanRenWang}.  For Hessian equations, subsolutions and tangent cones at infinity yield nearly optimal second derivative hypotheses \cite{Guan1994,Guan2014,Szekelyhidi}.  Trudinger's finite asymptotic limit, obtained by sending the normal Hessian eigenvalue to $+\infty$ \cite{Trudinger1995}, is a precursor of the limiting-phase gap used below.  The Dirichlet problem for the Hessian Lagrangian phase equation was studied in \cite{Bhattacharya,CollinsPicardWu}.

The classical special Lagrangian equation provides the closest Hessian counterpart to \eqref{eq:slc-intro}.  Chen--Yuan--Warren \cite{ChenWarrenYuan} proved an interior Hessian estimate on the convex branch.  In dimension three, Warren--Yuan \cite{WarrenYuan2010} obtained Hessian and gradient estimates for large phase; they also established the gradient estimate in the critical and supercritical regimes in general dimensions.  Wang--Yuan \cite{WangYuan2014} subsequently proved the corresponding Hessian estimate in all dimensions.  At the global level, Yuan's Bernstein theorems \cite{Yuan2002,Yuan2006} show that convex entire solutions, and more generally entire solutions with supercritical phase, are quadratic.  These results identify phase and convexity as two basic sources of regularity.

The restrictions are essential.  In the subcritical regime, singular solutions were constructed by Nadirashvili--Vl\u{a}du\c{t} \cite{NadirashviliVladut} and Wang--Yuan \cite{WangYuan2013}; Mooney--Savin \cite{MooneySavin} later produced Lipschitz viscosity solutions which are not $C^1$.

The dependence of $A[u]$ on $Du$ creates a first-order drift in the linearization.  Such terms are treated in augmented-Hessian theory \cite{JiangTrudingerYang,JiangTrudinger} and in the graph-curvature boundary estimates of Jiao--Sun \cite{JiaoSun}.  Harvey--Lawson \cite{HarveyLawson2021} computed the asymptotic interior and the optimal viscosity pseudoconvexity for a special Lagrangian theory that includes the graph-curvature operator; Xiang--Xiong \cite{XiangXiong} studied a generalized curvature version in the critical and supercritical ranges.  The mixed recession condition used here couples subsolution separation with the drift-corrected boundary curvature only when both standard quadratic terms degenerate.

An analogous interior theory holds for the special Lagrangian curvature equation.  Qiu--Zhou \cite{QiuZhou} obtained interior Hessian estimates at the critical phase and for convex graphical solutions, as well as interior gradient estimates for arbitrary constant phase.  Qiu--Tao \cite{QiuTao} constructed Lipschitz non-$C^1$ viscosity solutions in dimensions two and three and a smooth two-dimensional nonconvex curvature blow-up family.  These results show that phase and convexity continue to govern regularity in the curvature setting, although the operator now depends on the gradient.  The examples in \cite{QiuTao} concern interior degeneration on nonconvex or subcritical branches.  We work instead on the convex branch and study two boundary phenomena: failure of the mixed derivative estimate and collapse of the limiting-phase gap as $u_{\nu\nu}\to+\infty$.  They require separate arguments.

Throughout the paper, $\nu$ is the inward unit normal and
$\II_{\partial\Om}$ is normalized so that a convex boundary has positive
second fundamental form.  The symbol $C$ denotes a positive constant whose
dependence is displayed when it matters; unless stated otherwise, it is
independent of the solution, the homotopy parameter $t$, and the recession
size.  All matrix norms are operator norms, and $X:Y=\tr(XY)$ for symmetric
 matrices, while $x\cdot y$ denotes the Euclidean inner product.  The subscript $T$ denotes tangential restriction to
$T\partial\Om$; $D_{\partial\Om}^2$ is the intrinsic boundary Hessian, and
$I_T$ is the identity on $T\partial\Om$.  We write $\Sym(n)$ for the space
of real symmetric $n\times n$ matrices.

We begin by placing the equation in the vertical graph homotopy.  For $t\in[0,1]$ set
\begin{equation}\label{eq:homotopy-intro}
 w_t(p)=\sqrt{1+t^2|p|^2},\qquad
 B_t(p)=(I+t^2p\otimes p)^{-1/2},
\end{equation}
and
\begin{equation*}
 A_t(p,X)=\frac1{w_t(p)}B_t(p)XB_t(p),
 \qquad G_t(p,X)=F(A_t(p,X)).
\end{equation*}
We abbreviate $A_t[u]=A_t(Du,D^2u)$.  Thus $A_0[u]=D^2u$ and $A_1[u]=A[u]$.

Let $K\subset\R^n$ be a fixed compact convex set containing the gradients of the solutions under consideration.  On a level jet
\begin{equation}\label{eq:level-jet-intro}
 (t,p,X)\in[0,1]\times K\times\Sym(n),
 \qquad X\geq0,
 \qquad G_t(p,X)=\theta,
\end{equation}
 write
\begin{equation*}
 a=(a^{ij})=G_{t,X}(p,X),\qquad
 b=(b^k)=G_{t,p}(p,X),
\end{equation*}
that is,
\begin{equation*}
 a^{ij}=\frac{\partial G_t}{\partial X_{ij}}(p,X),
 \qquad
 b^k=\frac{\partial G_t}{\partial p_k}(p,X).
\end{equation*}
Let $\underline u\in C^3(\overline\Om)$ be a fixed convex function with the same boundary value as $u$.  We define the full linearized separation
\begin{equation*}
 \cJ(x;t,p,X)
 =a:\bigl(D^2\underline u(x)-X\bigr)
  +b\cdot\bigl(D\underline u(x)-p\bigr).
\end{equation*}
Let $d$ be the inward boundary distance in a fixed collar of $\partial\Om$ and set
\begin{align*}
 \ell(x;t,p,X)&=a:D^2d(x)+b\cdot Dd(x),\\
 Q(x;t,p,X)&=(p-D\underline u)^Ta(p-D\underline u)=a^{ij}(p_i-\underline u_i)(p_j-\underline u_j)\geq0.
\end{align*}
On $\partial\Om$, the distance function satisfies
$D^2d(\nu,\cdot)=0$ and
$D_T^2d=-\II_{\partial\Om}$.  Hence, writing $a_T$ for the
restriction of $a$ to $T\partial\Om$, we have
\begin{equation}\label{eq:ell-boundary-intro}
	-\ell=a_T:\II_{\partial\Om}-b\cdot\nu.
\end{equation}
Here $a_T:\II_{\partial\Om}$ is the ellipticity-weighted boundary
curvature, while $-b\cdot\nu$ is the normal contribution of the
gradient drift.  Their sum is the effective boundary curvature seen
by the full linearized operator.

We first isolate the recession sequences on which both quadratic terms lose coercivity.

\begin{definition}\label{def:ddrs-intro}
Fix $x_0\in\partial\Om$.  A sequence
$(x_m,t_m,p_m,X_m)$ of level jets, with $x_m$ in the fixed boundary collar, is called a doubly degenerate recession sequence based at $x_0$ if
\begin{gather}\notag
 x_m\to x_0,\qquad |X_m|\to\infty,\\
 a^{(m)}(Dd(x_m),Dd(x_m))\to0,
 \qquad Q^{(m)}\to0.\label{eq:ddrs-b-intro}
\end{gather}
Here $a^{(m)}$, $b^{(m)}$, $\cJ^{(m)}$, $\ell^{(m)}$, and
$Q^{(m)}$ denote the corresponding quantities evaluated at
$(x_m,t_m,p_m,X_m)$ and we regard $a$ as a bilinear form and write
$a(\xi,\eta)=a^{ij}\xi_i\eta_j$.  A \emph{recession limit pair} is a subsequential limit
\begin{equation*}
 (j,g)=\lim_{m\to\infty}(\cJ^{(m)},-\ell^{(m)}).
\end{equation*}
We denote the set of all such pairs by $\cR^*_{x_0}$.  If no such sequence exists, we put $\cR^*_{x_0}=\varnothing$.
\end{definition}

Lemma \ref{lem:coefficient-bounds} shows that $\cJ$ and $\ell$ are uniformly bounded on level jets; a diagonal argument then shows that $\cR^*_{x_0}$ is compact.  The two limits in \eqref{eq:ddrs-b-intro} mean that the normal distance-square term and the square generated by concavifying $u-\underline u$ have simultaneously lost coercivity.

This leads to the compatibility condition used in the mixed estimate.

\begin{definition}\label{def:MRC-intro}
We say that $\underline u$ satisfies the mixed recession compatibility condition at $x_0\in\partial\Om$ if there is a number $\tau_{x_0}>0$ such that
\begin{equation}\label{eq:MRC-intro}
 \inf_{(j,g)\in\cR^*_{x_0}}(j+\tau_{x_0}g)>0,
\end{equation}
where the infimum over the empty set is $+\infty$.  We say that (MRC) holds on $\partial\Om$ if it holds at every boundary point.
\end{definition}

Equivalently, along every doubly degenerate recession sequence based at $x_0$,
\begin{equation}\label{eq:MRC-seq-intro}
 \liminf_{m\to\infty}
 \bigl(\cJ^{(m)}-\tau_{x_0}\ell^{(m)}\bigr)>0
\end{equation}
with a common positive margin.  Since $x_m\to x_0$ and the coefficients are bounded,
 \[
  -\ell^{(m)}-
  \bigl((a^{(m)})_{T_{x_0}}:\II_{\partial\Om}(x_0)
        -b^{(m)}\cdot\nu(x_0)\bigr)\longrightarrow0.
 \]
Thus (MRC) allows subsolution separation and the drift-corrected boundary curvature in \eqref{eq:ell-boundary-intro} to compensate one another precisely on the doubly degenerate recession set.
 
 \begin{remark}\label{rem:MRC-natural}
 	Condition (MRC) is imposed only on recession jets for which both
 	quadratic coercive terms in the standard boundary barrier vanish.   Appendix~\ref{app:MRC-examples} records
 	four elementary situations in which (MRC) follows from familiar
 	analytic or geometric hypotheses.  These examples also show that the
 	condition reduces to standard strict-subsolution or boundary-convexity
 	criteria when one of the two mechanisms is already coercive.
 \end{remark}

Our first result is the mixed derivative estimate.

\begin{theorem}\label{thm:mixed-intro}
 Let $\Om$ be a bounded $C^4$ domain, let $\underline u\in C^3(\overline\Om)$ be convex, and let $K\subset\R^n$ be compact and convex.  Suppose that $u\in C^4(\overline\Om)$ is a convex solution of
\begin{equation}\label{eq:Pt-intro}
 G_t(Du,D^2u)=\theta\quad\text{in }\Om,
 \qquad u=\underline u\quad\text{on }\partial\Om
\end{equation}
 for an arbitrary $t\in[0,1]$, and assume
\begin{equation*}
 Du(\overline\Om)\subset K,
 \qquad u\geq\underline u\quad\text{in }\Om.
\end{equation*}
If (MRC) holds on $\partial\Om$, then
\begin{equation}\label{eq:mixed-est-intro}
 \sup_{x\in\partial\Om}\sup_{\substack{\xi\in T_x\partial\Om\\|\xi|=1}}
 |D^2u(\xi,\nu)|\leq C.
\end{equation}
The same constant works for every $t\in[0,1]$ and every solution with fixed $\Om$, $\theta$, $\underline u$, and $K$, provided the positive margins in (MRC) are uniform on $\partial\Om$.
\end{theorem}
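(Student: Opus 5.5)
We follow the classical barrier argument for mixed boundary derivatives (Caffarelli--Nirenberg--Spruck, Guan), adapted to the full linearized operator including the gradient drift. Fix $x_0\in\partial\Om$ and flatten the boundary locally. Let $\xi$ be a tangential vector field near $x_0$ that is tangent along $\partial\Om$; for instance, a rotation field $T=\partial_\alpha+\sum_\beta B_{\alpha\beta}(x_\beta\partial_n-x_n\partial_\beta)$ built from the local boundary graph. Let
\[
 \cL v=a^{ij}v_{ij}+b^kv_k
\]
denote the linearization of $G_t(Du,D^2u)$ at the solution. Differentiating \eqref{eq:Pt-intro} along $T$ gives
\[
 |\cL(T(u-\underline u))|\leq C\Bigl(1+\sum_i a^{ii}+|b|\Bigr),
\]
and the coefficient bounds of Lemma \ref{lem:coefficient-bounds} control $\sum a^{ii}$ and $|b|$ uniformly. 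We also use the standard trick of adding $\sum_{k<n}|(u-\underline u)_k|^2$ to handle the commutator terms. This produces $\cL\Psi_0\geq -C(1+\sum a^{ii})$ for
\[
 \Psi_0=\pm T(u-\underline u)+\sum_{k<n}(u-\underline u)_k^2 ,
\]
with $\Psi_0=0$ on $\partial\Om\cap B_\rho(x_0)$ and $|\Psi_0|\leq C$ on $\Om\cap B_\rho(x_0)$, using $Du\in K$.

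The barrier is
\[
 h=(u-\underline u)-\tau d+N d^2\qquad\text{on } \Om_\rho=\Om\cap B_\rho(x_0),
\]
with $\tau=\tau_{x_0}$ from (MRC) and $N$ large. Since $u\geq\underline u$, and $\rho$ can be taken small relative to $\tau/N$, we have $h\geq0$ on $\partial\Om_\rho$ in the relevant part. The key inequality to prove is
\[
 \cL h\leq-\eps_0\Bigl(1+\sum a^{ii}\Bigr)\quad\text{in }\Om_\rho .
\]
Compute
\[
 \cL(u-\underline u)=\cJ\text{-type terms}
\]
evaluated at the jet $(t,Du,D^2u)$. To extract the quadratic $-Q$, we concavify via $e^{-K(u-\underline u)}$ or add $-\tfrac{K}{2}(u-\underline u)^2$-like terms. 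Then
\[
 \cL(Nd^2)=2Na(Dd,Dd)+O(Nd),\qquad -\tau\cL d=-\tau\ell .
\]
Altogether,
\[
 \cL h\leq -\cJ+\tau\ell-cQ+2Na(Dd,Dd)\cdot(\text{sign chosen appropriately})+\cdots .
\]
We therefore arrange signs so that the target quantity is
\[
 \cJ-\tau\ell+K Q+N a(Dd,Dd),
\]
and aim to show it is $\geq\eps_0$ at every point of $\Om_\rho$, for suitable $K,N,\rho$.

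This positivity is the main obstacle, and we argue it by contradiction and compactness. Suppose there are solutions (with varying $t$), points $x_m\to x_0$, and constants $K_m,N_m\to\infty$ for which the quantity stays $\leq 1/m$. Since $\cJ$ and $\ell$ are bounded (Lemma \ref{lem:coefficient-bounds}), this forces $Q^{(m)}\to0$ and $a^{(m)}(Dd,Dd)\to0$. If $|X_m|$ stays bounded, then $a$ is uniformly elliptic at the limiting jet, which contradicts $a(Dd,Dd)\to0$. Hence $|X_m|\to\infty$, and the sequence is a doubly degenerate recession sequence. Passing to a limit pair $(j,g)\in\cR^*_{x_0}$ gives $j+\tau g\leq0$, contradicting \eqref{eq:MRC-intro}. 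The subtlety is that the argument must be run with fixed $K,N$ chosen uniformly. We handle this by a quantitative version: by compactness of $\cR^*_{x_0}$ and the uniform margin, there exist $\eta>0$ and $K,N$ such that on level jets with $x$ near $x_0$, either $KQ+Na(Dd,Dd)\geq C_1$, or the jet is within $\eta$ of the recession set, where $\cJ-\tau\ell\geq\eps_0$. We also need to absorb the $O(Nd)$ and $O(Kd)$ errors by shrinking $\rho$, and to obtain the $\sum a^{ii}$ factor from ellipticity or normalization. That normalization is the delicate bookkeeping step.

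Finally, we compare $A h+B|x-x_0|^2\pm\Psi_0$ via the maximum principle on $\Om_\rho$, using
\[
 \cL(|x-x_0|^2)\leq C\Bigl(1+\sum a^{ii}\Bigr).
\]
This gives $A h+B|x-x_0|^2\pm\Psi_0\geq0$, with equality at $x_0$. Taking the normal derivative at $x_0$ yields $|u_{\xi\nu}(x_0)|\leq C$. All constants depend only on $\Om,\theta,\underline u,K$ and the uniform (MRC) margins, and a finite covering of $\partial\Om$ gives \eqref{eq:mixed-est-intro} uniformly in $t$.
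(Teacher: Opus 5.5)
Your proposal is essentially the paper's proof. It uses the Killing (rotation) field with $\cL(Tu)=0$, a concavified barrier whose linearization produces $\cJ-\tau\ell+KQ+Na(Dd,Dd)$, and uniform positivity of that quantity from (MRC) by the compactness/contradiction argument (Lemmas~\ref{lem:MRC-localization} and~\ref{lem:finite-coercive}). It then concludes with the comparison $Av+B|x-x_0|^2\pm T(u-\underline u)$ and the normal derivative at $x_0$.

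The one correction is a sign slip. The barrier as you wrote it, $(u-\underline u)-\tau d+Nd^2$, has the signs of the $d$-terms reversed; it must be $\Phi_K(u-\underline u)+\tau d-Nd^2$, which is what your target quantity and the requirement $\rho\le\tau/N$ already presuppose.

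Two pieces are also unnecessary here. The extra $\sum_{k<n}(u-\underline u)_k^2$ term is not needed because $\cL(Tu)=0$ exactly. The $\sum a^{ii}$ normalization is not needed because $c_0\le\tr a\le n$.
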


The proof uses the concavified barrier
\begin{equation*}
 v=\Phi_\mu(u-\underline u)+\tau d-Nd^2,
 \qquad
 \Phi_\mu(s)=\frac{1-e^{-\mu s}}{\mu}.
\end{equation*}
Writing $w=u-\underline u$ and $\cL=a^{ij}D_{ij}+b^kD_k$, the two relevant negative terms in $\cL v$ are
\[
 -\Phi_\mu'(w)(\cJ+\mu Q)-2N a(Dd,Dd).
\]
The two negative quadratic terms control every recession mode except those in Definition \ref{def:ddrs-intro}; (MRC) supplies the missing sign there.  Theorem \ref{thm:MRC-sharp} shows that this condition is necessary and sufficient at the recession level for the fixed smooth zero-order barriers considered in Section~\ref{sec:MRC}.

We next describe the second, independent recession mechanism.  Write $\varphi=u|_{\partial\Om}$.  At $x\in\partial\Om$, decompose
\[
 Du=q+s\nu,\qquad q=D_T\varphi,\qquad s=u_\nu.
\]
The boundary identity is
\begin{equation}\label{eq:tangential-block-intro}
 M(x,s):=D_T^2u=D^2_{\partial\Om}\varphi-s\II_{\partial\Om}.
\end{equation}
For $t\in[0,1]$, put
\begin{equation*}
 C_t=w_t(Du)(I+t^2Du\otimes Du).
\end{equation*}
Its tangential block is $C_{t,T}=w_t(Du)(I_T+t^2q\otimes q)$.
When $M(x,s)\geq0$, define
\begin{equation*}
 L_t(x,s)=\frac\pi2+
 \sum_{\alpha=1}^{n-1}
 \arctan\lambda_\alpha
 \bigl(C_{t,T}^{-1/2}M(x,s)C_{t,T}^{-1/2}\bigr).
\end{equation*}
For $t=1$ we write $\mathcal B(x,s)=L_1(x,s)$.
This is the phase obtained by sending the double-normal Hessian entry to $+\infty$ while keeping the tangential and mixed blocks fixed.

Our second result is an exact identity rather than an asymptotic formula.  Its full derivation is given in Section \ref{sec:normal-gap}.  At a boundary point write
\[
 D^2u=\begin{pmatrix}M&z\\z^T&r_\nu\end{pmatrix},
 \qquad
 C_t=\begin{pmatrix}C_{t,T}&c_t\\c_t^T&c_{t,0}\end{pmatrix},
\]
where $z$ is the mixed tangential-normal block,
$z(\xi)=D^2u(\xi,\nu)$ for $\xi\in T_x\partial\Om$, and
$r_\nu=u_{\nu\nu}$.
The remaining blocks are
\[
 c_t=w_t(Du)t^2sq,
 \qquad c_{t,0}=w_t(Du)(1+t^2s^2).
\]
To shorten the formulas, set
\begin{equation*}
 \begin{gathered}
 D_t=C_{t,T}+\sqrt{-1}M,
 \qquad e_t=c_t+\sqrt{-1}z,
 \qquad S_t=e_t^TD_t^{-1}e_t,\\
 \alpha_t=c_{t,0}-\operatorname{Re}S_t,
 \qquad \beta_t=\operatorname{Im}S_t,
 \qquad \delta_t=L_t-F(A_t[u]).
 \end{gathered}
\end{equation*}
Thus $D_t$ and $e_t$ are the complex tangential block and mixed vector,
respectively, while $S_t$ is their Schur-complement contribution.

The Schur complement gives the following exact identity.

\begin{theorem}\label{thm:exact-gap-intro}
If $D^2u\geq0$, then
\[
 \delta_t=\arctan\frac{\alpha_t}{r_\nu-\beta_t},
 \qquad \alpha_t>0,
 \qquad r_\nu-\beta_t\geq0.
\]
If $|Du|\leq P$ and $|z|\leq Z$, then
\[
 r_\nu=\beta_t+\alpha_t\cot\delta_t,
 \qquad
 1\leq\alpha_t\leq C(P,Z),
 \qquad |\beta_t|\leq C(P,Z).
\]
\end{theorem}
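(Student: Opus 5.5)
The plan is to rewrite the phase $F(A_t[u])$ as the argument of a complex determinant, and then factor that determinant through the Schur complement of the tangential block. Since $B_t(p)^{-2}=I+t^2p\otimes p$, the eigenvalues of $A_t[u]=w_t^{-1}B_tD^2uB_t$ are exactly the eigenvalues of $C_t^{-1/2}D^2u\,C_t^{-1/2}$, where $C_t=w_t(Du)(I+t^2Du\otimes Du)$ is positive definite. It follows that
\[
\prod_{i=1}^n(1+\sqrt{-1}\lambda_i)=\frac{\det(C_t+\sqrt{-1}D^2u)}{\det C_t},
\]
so $F(A_t[u])$ equals $\arg\det(C_t+\sqrt{-1}D^2u)$.

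The branch of the argument is fixed by continuity along $s\mapsto C_t+\sqrt{-1}sD^2u$, $s\in[0,1]$. This matrix is never singular: if $(C_t+\sqrt{-1}sX)y=0$, then $y^*C_ty+\sqrt{-1}s\,y^*Xy=0$, and taking real parts forces $y=0$. Applying the same reasoning to the tangential blocks $D_t=C_{t,T}+\sqrt{-1}M$ gives $\arg\det D_t=L_t-\tfrac\pi2$. This uses that $M\geq0$, which holds because $M$ is a principal block of $D^2u\geq0$, and that $C_{t,T}>0$.

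Next I apply the block determinant formula:
\[
\det(C_t+\sqrt{-1}D^2u)=\det D_t\cdot\bigl(c_{t,0}+\sqrt{-1}r_\nu-S_t\bigr)=\det D_t\cdot\bigl(\alpha_t+\sqrt{-1}(r_\nu-\beta_t)\bigr).
\]
Taking arguments gives
\[
F(A_t[u])=L_t-\tfrac\pi2+\arg\bigl(\alpha_t+\sqrt{-1}(r_\nu-\beta_t)\bigr).
\]
Once I show $\alpha_t>0$ and $r_\nu-\beta_t\geq0$, this argument lies in $(0,\tfrac\pi2]$ and equals $\tfrac\pi2-\arctan\frac{\alpha_t}{r_\nu-\beta_t}$, with the value $\tfrac\pi2$ when $r_\nu=\beta_t$. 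Hence $\delta_t=\arctan\frac{\alpha_t}{r_\nu-\beta_t}$, and solving for $r_\nu$ gives $r_\nu=\beta_t+\alpha_t\cot\delta_t$.

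The main point is the sign information, and I will get it from a sesquilinear quadratic form. Set $y=(-D_t^{-1}e_t,\,1)\in\mathbb C^n$ and $Z=C_t+\sqrt{-1}D^2u$, which is complex symmetric. Then $Zy=(0,\;c_{t,0}+\sqrt{-1}r_\nu-S_t)$, so
\[
y^*Zy=\alpha_t+\sqrt{-1}(r_\nu-\beta_t).
\]
On the other hand, $y^*Zy=y^*C_ty+\sqrt{-1}\,y^*D^2u\,y$, where both $y^*C_ty$ and $y^*D^2u\,y$ are real. Comparing real and imaginary parts gives
\[
\alpha_t=y^*C_ty\geq\lambda_{\min}(C_t)|y|^2\geq1,
\qquad
r_\nu-\beta_t=y^*D^2u\,y\geq0.
\]
Here I use $C_t\geq w_tI\geq I$ and $|y|\geq1$, since the last component of $y$ equals $1$. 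This step is where convexity enters, and it is the crux of the argument.

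For the upper bounds, note that $\operatorname{Re}D_t=C_{t,T}\geq I_T$. Hence $|D_tv|\,|v|\geq\operatorname{Re}(v^*D_tv)\geq|v|^2$, which gives $|D_t^{-1}|\leq1$. Therefore
\[
|S_t|\leq|e_t|^2\leq\bigl(|c_t|+|z|\bigr)^2\leq C(P,Z).
\]
Also $c_{t,0}\leq(1+P^2)^{3/2}$. Together these give $\alpha_t\leq c_{t,0}+|S_t|\leq C(P,Z)$ and $|\beta_t|\leq|S_t|\leq C(P,Z)$, uniformly in $t\in[0,1]$.
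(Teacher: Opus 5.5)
Your proof is correct. Its skeleton matches the paper's: the factorization $I+\sqrt{-1}A_t=C_t^{-1/2}(C_t+\sqrt{-1}D^2u)C_t^{-1/2}$, the complex block-determinant formula, the continuous lift of the phase along $sD^2u$, and the bound on $\|D_t^{-1}\|$ from $\operatorname{Re}D_t=C_{t,T}$ for the upper estimates.

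You differ from the paper in the positivity step. Write $\mathfrak s_t=\alpha_t+\sqrt{-1}(r_\nu-\beta_t)$ for the Schur complement. The paper (Lemma~\ref{lem:exact-Schur}) takes $y=(C_t+\sqrt{-1}D^2u)^{-1}e_n$. This locates $\mathfrak s_t^{-1}=y^*C_ty-\sqrt{-1}\,y^*D^2u\,y$ only qualitatively. The paper then proves $\alpha_t\geq1$ separately in Lemma~\ref{lem:alpha-beta}: it normalizes with $C_{t,T}^{-1/2}$, diagonalizes $\widehat M$, shows componentwise that $\operatorname{Re}S_t\leq c_t^TC_{t,T}^{-1}c_t$, and computes the real Schur complement $c_{t,0}-c_t^TC_{t,T}^{-1}c_t=w_t^3/(1+t^2|q|^2)$.

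Your vector $(-D_t^{-1}e_t,1)$ is exactly $\mathfrak s_t(C_t+\sqrt{-1}D^2u)^{-1}e_n$. Normalizing its last entry to $1$ turns the same computation into $\mathfrak s_t=y^*C_ty+\sqrt{-1}\,y^*D^2u\,y$. So the sign and the bound $\alpha_t\geq w_t|y|^2\geq1$ come out together, with no diagonalization.

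The paper's route gives an explicit defect term $(\widehat z_a-k_a\widehat c_a)^2/(1+k_a^2)$ and the sharper constant $w_t^3/(1+t^2|q|^2)$. Your identity recovers that constant too. Write $y=y_1+\sqrt{-1}y_2$ with $y_1,y_2$ real. Then $y^*C_ty=y_1^TC_ty_1+y_2^TC_ty_2\geq y_1^TC_ty_1$, and the minimum of $y_1^TC_ty_1$ over real vectors with last entry $1$ is the real Schur complement. Your cruder bound $\|D_t^{-1}\|\leq1$, in place of $w_t^{-1}$, only changes constants.

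Two small repairs are needed.

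First, you say $\arg(\alpha_t+\sqrt{-1}(r_\nu-\beta_t))$ lies in $(0,\pi/2]$ and equals $\pi/2$ when $r_\nu=\beta_t$. In fact it lies in $[0,\pi/2)$ and vanishes when $r_\nu=\beta_t$. It is $\delta_t=\pi/2-\arg\mathfrak s_t$ that lies in $(0,\pi/2]$, so your final formula is unaffected.

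Second, to add the lifted arguments legitimately, state that your computation holds verbatim with $sD^2u$ in place of $D^2u$ for each $s\in[0,1]$. Then $\operatorname{Re}\mathfrak s_t(s)\geq1$ along the whole path, and the lifted argument of $\mathfrak s_t$ is its principal value.
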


The lower bound $\alpha_t\geq1$ is uniform even when the tangential block is only positive semidefinite.  A consequence is the quantitative double-normal estimate.

\begin{corollary}\label{cor:boundary-hessian-intro}
Under the assumptions of Theorem \ref{thm:mixed-intro}, suppose in addition that
\begin{equation*}
 \mathcal B(x,u_\nu(x))\geq\theta+\sigma
 \qquad\text{on }\partial\Om
\end{equation*}
for some $0<\sigma<\pi/2$.  Then
\begin{equation*}
 \sup_{\partial\Om}|D^2u|
 \leq C_0+C_1\cot\sigma.
\end{equation*}
The same upper bound is uniform along the homotopy if the gap is imposed on a normal-derivative window containing every $u_\nu$.  The constants are independent of both $t$ and $\sigma$.
\end{corollary}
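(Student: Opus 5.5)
We plan to combine Theorem \ref{thm:mixed-intro} with Theorem \ref{thm:exact-gap-intro}, and then recover the full boundary Hessian from its three blocks.

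\emph{Tangential and mixed blocks.}  By Theorem \ref{thm:mixed-intro}, $|z|\le Z$ on $\partial\Om$, with $Z$ independent of $t$ and of the solution.  Since $Du(\overline\Om)\subset K$, we have $|Du|\le P:=\max_K|p|$.  In particular $|s|=|u_\nu|\le P$.  The tangential block is given by \eqref{eq:tangential-block-intro}, namely $M=D^2_{\partial\Om}\varphi-s\II_{\partial\Om}$, so $|M|\le C(\Om,\underline u,P)$.  Neither bound involves $\sigma$.

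\emph{Normal entry.}  Convexity gives $D^2u\ge0$, so $M\ge0$ and $L_t(x,u_\nu)$ is defined.  Theorem \ref{thm:exact-gap-intro} then yields
\[
 u_{\nu\nu}=r_\nu=\beta_t+\alpha_t\cot\delta_t,
 \qquad 1\le\alpha_t\le C(P,Z),\quad |\beta_t|\le C(P,Z),
\]
together with $\alpha_t>0$ and $r_\nu-\beta_t\ge0$.  Hence $\delta_t\in(0,\pi/2]$.  The hypothesis is stated at $t=1$: $\delta_1=\mathcal B(x,u_\nu)-F(A[u])=\mathcal B(x,u_\nu)-\theta\ge\sigma$.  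Because $\cot$ is decreasing on $(0,\pi/2]$, we get $\cot\delta_1\le\cot\sigma$, and therefore
\[
 0\le u_{\nu\nu}\le C(P,Z)+C(P,Z)\cot\sigma .
\]
For general $t$ we use the homotopy version of the hypothesis: the gap condition $L_t(x,s)\ge\theta+\sigma$ holds for every $s$ in a window containing all $u_\nu$.  Since $F(A_t[u])=\theta$, this gives $\delta_t\ge\sigma$, and the same argument applies with constants uniform in $t$.

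\emph{Assembly.}  For a symmetric matrix, the operator norm is bounded by a fixed multiple of the sum of the norms of its blocks.  Hence $|D^2u|\le C(|M|+|z|+|u_{\nu\nu}|)$, which gives $\sup_{\partial\Om}|D^2u|\le C_0+C_1\cot\sigma$ with $C_0,C_1$ depending only on $\Om,\theta,\underline u,K$ and the (MRC) margins.  They do not depend on $t$ or $\sigma$.

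\emph{Main obstacle.}  The substantive difficulty is in the earlier results rather than in this assembly.  First, the constant $C(P,Z)$ in Theorem \ref{thm:exact-gap-intro} must not depend on the size or any lower bound of $M$; that theorem asserts exactly this, and its uniformity even when $M$ is only semidefinite is what we rely on.  Second, one must identify $F(A_t[u])$ with $\theta$ on the boundary; this holds by continuity, since $u\in C^4(\overline\Om)$.  The only point needing care is confirming that $\delta_t\in(0,\pi/2]$, so that monotonicity of $\cot$ applies; this follows from $\alpha_t>0$ and $r_\nu-\beta_t\ge0$.
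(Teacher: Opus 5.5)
Your block assembly is the same as the paper's: the tangential block comes from \eqref{eq:tangential-block-intro}, the mixed block from Theorem \ref{thm:mixed-intro}, and the double-normal entry from the exact Schur identity. At $t=1$ your argument is complete.

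The gap is for solutions of the homotopy with $t<1$. The corollary covers these, since it is stated under the hypotheses of Theorem \ref{thm:mixed-intro} with arbitrary $t$. For such a solution your equality $\delta_1=\mathcal B(x,u_\nu)-F(A[u])=\mathcal B(x,u_\nu)-\theta$ fails, because $u$ solves $F(A_t[u])=\theta$, not $F(A_1[u])=\theta$. You then replace the hypothesis by a gap on $L_t$. However, the statement assumes a gap only on $\mathcal B=L_1$, and so does the window version \eqref{eq:window-gap-global} used later. As written, you prove the $t<1$ case only under a different, $t$-dependent assumption.

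The missing link is a comparison between parameter $t$ and parameter $1$. The paper supplies it through Lemma \ref{lem:L-monotone}: for $M\ge0$, $L_t(x,s)\ge L_1(x,s)=\mathcal B(x,s)$. This holds because $C_{t,T}=w_t(I_T+t^2q\otimes q)$ is nondecreasing in $t$ in the Loewner order. Since $M\ge0$, the eigenvalues of $C_{t,T}^{-1/2}MC_{t,T}^{-1/2}$ can therefore only decrease as $t$ increases. Hence $\delta_t=L_t-\theta\ge\mathcal B-\theta\ge\sigma$, which is exactly Proposition \ref{prop:double-normal}.

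An equally short repair avoids $L_t$ altogether. The identity in Theorem \ref{thm:exact-gap-intro} is pointwise algebraic, so apply it with parameter $1$ to the $t$-solution $u$. Lemma \ref{lem:t-monotone} gives $F(A_1[u])=G_1(Du,D^2u)\le G_t(Du,D^2u)=\theta$, so $\delta_1\ge\mathcal B-\theta\ge\sigma$. Meanwhile $\alpha_1$ and $\beta_1$ are still bounded by $C(P,Z)$. Either way, imposing the gap on $\mathcal B$ over a window containing all $u_\nu$ yields constants independent of $t$ and $\sigma$.
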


For the original graph equation $t=1$, put
\[
 \delta(x)=\mathcal B(x,u_\nu(x))-\theta
\]
and define the actual boundary gap
\begin{equation}\label{eq:delta-star-intro}
 \delta_*(u)=\min_{x\in\partial\Om}
 \delta(x).
\end{equation}
Every smooth convex solution with finite boundary Hessian has $\delta_*(u)>0$.  The exact boundary identity first controls the double-normal
derivative in terms of the actual phase gap.  A maximum principle for
the mean curvature of a convex constant-phase graph then propagates
this boundary control to the whole graph, yielding the following
global statement.

For nonnegative constants $P,K_\varphi,Z$, we say that a convex solution
$u$ with boundary value $\varphi$ is
\emph{$(P,K_\varphi,Z)$-controlled} if it satisfies
\begin{equation}\label{eq:compact-data-intro}
 \|u\|_{C^1(\overline\Om)}\leq P,
 \qquad
 \|\varphi\|_{C^2(\partial\Om)}\leq K_\varphi,
 \qquad
 \sup_{x\in\partial\Om}
 \sup_{\substack{\xi\in T_x\partial\Om\\|\xi|=1}}
 |D^2u(\xi,\nu)|\leq Z.
\end{equation}

With this terminology, the global estimate takes the following form.

\begin{theorem}\label{thm:compactness-intro}
Every $(P,K_\varphi,Z)$-controlled convex solution of $F(A[u])=\theta$ satisfies
\begin{equation}\label{eq:global-upper-intro}
 \sup_{\overline\Om}|D^2u|+\sup_{\overline\Om}|A[u]|
 \leq C\bigl(1+\cot\delta_*(u)\bigr),
\end{equation}
and
\begin{equation}\label{eq:global-lower-intro}
 \sup_{\overline\Om}|D^2u|
 \geq \cot\delta_*(u)-C.
\end{equation}
Here $C$ depends only on $n,P,K_\varphi,Z$ and the fixed boundary geometry.  Consequently, for any family satisfying \eqref{eq:compact-data-intro} uniformly, 
\begin{equation}\label{eq:compact-equivalence-intro}
 \sup_{\overline\Om}|D^2u_j|\longrightarrow\infty
 \quad\Longleftrightarrow\quad
 \delta_*(u_j)\longrightarrow0.
\end{equation}
The same equivalence holds with $|D^2u_j|$ replaced by the curvature norm $|A[u_j]|$.
\end{theorem}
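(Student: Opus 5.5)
The plan is to combine the exact boundary identity of Theorem~\ref{thm:exact-gap-intro} at $t=1$ with a maximum principle that carries boundary curvature control into the interior. At $t=1$ we have $L_1=\mathcal B$ and $F(A[u])=\theta$, so the gap $\delta_1$ in Theorem~\ref{thm:exact-gap-intro} is exactly $\delta(x)$. The hypotheses \eqref{eq:compact-data-intro} give $|Du|\le P$ and $|z|\le Z$, so at every $x\in\partial\Om$
\[
 u_{\nu\nu}(x)=\beta_1(x)+\alpha_1(x)\cot\delta(x),
 \qquad 1\le\alpha_1\le C,\qquad |\beta_1|\le C.
\]
Since $\delta(x)\in(0,\pi/2]$ for convex solutions, $\cot$ is decreasing there, and $\delta(x)\ge\delta_*(u)$ yields
\[
 u_{\nu\nu}\le C\bigl(1+\cot\delta_*(u)\bigr)\quad\text{on }\partial\Om.
\]
The tangential block $M=D^2_{\partial\Om}\varphi-u_\nu\II_{\partial\Om}$ is bounded by $C(K_\varphi,P)$, and the mixed block is bounded by $Z$. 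Because $D^2u\ge0$, we get
\[
 \sup_{\partial\Om}|D^2u|\le C\bigl(1+\cot\delta_*(u)\bigr).
\]
Since $|A|\le |D^2u|/w$ up to bounded factors, the same bound holds for $A[u]$ on $\partial\Om$.

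For the lower bound \eqref{eq:global-lower-intro}, pick $x_*\in\partial\Om$ attaining the minimum in \eqref{eq:delta-star-intro}. There $u_{\nu\nu}(x_*)\ge\alpha_1\cot\delta_*-|\beta_1|\ge\cot\delta_*(u)-C$, using $\alpha_1\ge1$. Since $\sup|D^2u|\ge u_{\nu\nu}(x_*)$, this gives the lower bound with no further work.

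The main step is propagating the boundary upper bound to the interior. I would follow the hint and show that the mean curvature $H=\sum\kappa_i=\tr A[u]$ of a convex constant-phase graph satisfies a maximum principle. Concretely, with $\Delta_g$ the Laplace--Beltrami operator of the induced metric twisted by the linearized coefficients $F^{ij}$, the Simons-type identity for the linearized operator $\mathcal L_F=F^{ij}\nabla_i\nabla_j$ on $\Gamma_u$ should give
\[
 \mathcal L_F H=-F^{ij,kl}h_{ij\alpha}h_{kl\alpha}+(\text{curvature terms}).
\]
The curvature terms come from commuting derivatives on a hypersurface in flat $\R^{n+1}$ and have the form $\sum_i F^{ii}\kappa_i^2\,H-(\sum_i F^{ii}\kappa_i)|A|^2$, plus terms involving the constant phase. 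The first-order term vanishes because $F(A)=\theta$ is constant along the graph.

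Concavity of $F$ on the positive cone makes the third-derivative term nonnegative. For the zero-order terms I would use the known fact, as in Qiu--Zhou for convex special Lagrangian curvature graphs, that $\arctan$ gives $F^{ii}=1/(1+\kappa_i^2)$. Then $\sum_i F^{ii}\kappa_i^2H-\sum_i F^{ii}\kappa_i|A|^2$ can be compared using convexity. If the sign fails, I would fall back on $\log H$ or on $H$ plus a multiple of the height or of $w$, choosing the multiple so that the bounded gradient terms are absorbed. Either route gives
\[
 \sup_{\Om}H\le \sup_{\partial\Om}H+C,
\]
and hence $|A|\le H$ by convexity. I expect this step to be the main obstacle.

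To translate back, note that $D^2u=wB^{-1}AB^{-1}$ with $|B^{-1}|\le C(P)$, so the curvature bound gives \eqref{eq:global-upper-intro}. Finally, the equivalence \eqref{eq:compact-equivalence-intro} follows from the two-sided bounds, because $\cot\delta\to\infty$ if and only if $\delta\to0$. The same holds for $|A|$, since $|A|$ and $|D^2u|$ are comparable with constants depending only on $P$.
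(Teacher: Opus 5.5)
Your outline follows the paper's proof. You bound the boundary Hessian from $u_{\nu\nu}=\beta_1+\alpha_1\cot\delta$ with $\delta\ge\delta_*$, and get the lower bound at a minimizing point $x_*$ from $\alpha_1\ge1$. You then propagate by a maximum principle for $H$, and conclude with $|A|\le H$ and $|A|\le|D^2u|\le(1+P^2)^{3/2}|A|$. Those parts are fine.

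The gap is in the step you flag as the main obstacle. You never establish the sign of the zero-order term. Moreover, the expression you display, $H\sum_iF^{ii}\kappa_i^2-(\sum_iF^{ii}\kappa_i)|A|^2$, is the negative of the correct one. Combine the Simons identity $\nabla_i\nabla_jH=\Delta h_{ij}-Hh_{im}h_{mj}+\tr(h^2)h_{ij}$ with $F^{ij}\Delta h_{ij}=-F^{ij,rs}\nabla_kh_{ij}\nabla_kh_{rs}$, and write $f_i=(1+\kappa_i^2)^{-1}$. This gives
\[
F^{ij}\nabla_i\nabla_jH=-F^{ij,rs}\nabla_kh_{ij}\nabla_kh_{rs}+\Bigl(\sum_if_i\kappa_i\Bigr)|A|^2-H\sum_if_i\kappa_i^2 .
\]
Symmetrizing the zero-order part gives
\[
\Bigl(\sum_if_i\kappa_i\Bigr)|A|^2-H\sum_if_i\kappa_i^2=\frac12\sum_{i,j}(f_j-f_i)(\kappa_i-\kappa_j)\kappa_i\kappa_j,
\qquad
(f_j-f_i)(\kappa_i-\kappa_j)=\frac{(\kappa_i+\kappa_j)(\kappa_i-\kappa_j)^2}{(1+\kappa_i^2)(1+\kappa_j^2)}\geq0 ,
\]
which is nonnegative on the convex branch. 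Hence $F^{ij}\nabla_i\nabla_jH\ge0$ and $\sup H\le\sup_{\partial}H$ exactly. There are no extra ``constant-phase'' terms and no additive constant. This explicit computation is the heart of the propagation step, and your proposal leaves it as ``should give''.

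Your fallback would not have rescued a genuinely unfavorable sign. For the height and for $w$, the operator $F^{ij}\nabla_i\nabla_j$ produces only terms bounded by multiples of $\sum_if_i\kappa_i\le n/2$ and $\sum_if_i\kappa_i^2\le n$. The zero-order term above, by contrast, can be of order $\kappa_{\max}^2$. For example, take $n=2$ and $\theta>\pi/2$ with $\kappa_1\to\infty$; then $\kappa_2\to\tan(\theta-\pi/2)>0$ and the term is comparable to $\kappa_1^2$. Passing to $\log H$ only divides this by $H$, which still leaves an unbounded term. So the global upper bound stands or falls with the sign computation above. Once you supply it, your argument coincides with the paper's.
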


The gap in \eqref{eq:delta-star-intro} is evaluated at the realized normal derivative.  If a larger prescribed normal window contains slopes that the solution never reaches, the infimum over that window may vanish without curvature blow-up.  Thus \eqref{eq:compact-equivalence-intro} concerns the realized gap rather than an auxiliary window.

Finally, the cotangent rate in Theorems \ref{thm:exact-gap-intro} and \ref{thm:compactness-intro} is optimal.  We have the following smooth sharpness family.

\begin{theorem}\label{thm:radial-intro}
Let $\pi/2\leq\theta<n\pi/2$.
There exist a fixed ball $\Om\subset\R^n$, a constant $\delta_0>0$, and a family of smooth strictly convex solutions
\[
 u_\delta\in C^\infty(\overline\Om),
 \qquad 0<\delta<\delta_0,
\]
of $F(A[u_\delta])=\theta$ such that $\delta_*(u_\delta)=\delta$,
\[
\sup_{0<\delta<\delta_0}
\left(
\|u_\delta\|_{C^1(\overline\Om)}
+\|u_\delta\|_{C^3(\partial\Om)}
+\sup_{x\in\partial\Om}
 \sup_{\substack{\xi\in T_x\partial\Om\\|\xi|=1}}
 |D^2u_\delta(\xi,\nu)|
\right)<\infty,
\]
where the $C^3(\partial\Om)$ norm is the intrinsic norm of the
boundary trace.  Nevertheless,
\[
\sup_{x\in\partial\Om}(u_{\delta})_{\nu\nu}(x)
\asymp\delta^{-1}.
\]
Moreover, the $C^4$ norm of the boundary trace diverges at order $\delta^{-1}$.
\end{theorem}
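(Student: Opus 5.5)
The plan is to build the family from one \emph{maximal} radial solution and translate it against a fixed ball. Translating, rather than using the radial solution on a concentric ball, is what makes the boundary trace nonconstant, and this is exactly what produces the $\delta^{-1}$ divergence of its $C^4$ norm.

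\emph{Step 1: the radial profile.} For $U=U(r)$ one has $A[U]$ with eigenvalues $U''/w^3$ (radial) and $U'/(rw)$ (multiplicity $n-1$), where $w=\sqrt{1+U'^2}$. The equation becomes the ODE
\[
 \arctan\frac{U''}{w^3}+(n-1)\arctan\frac{U'}{rw}=\theta,
 \qquad U'(0)=0 .
\]
Near $r=0$ the solution is $U=\tfrac{\lambda}{2}r^2+\dots$ with $n\arctan\lambda=\theta$; it is smooth and strictly convex.

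Along the solution, $U'/(rw)$ increases as long as $U''/w^3>U'/(rw)$. The tangential angle therefore increases toward the value $\theta-\pi/2$, which lies in $[0,(n-1)\pi/2)$ because $\pi/2\le\theta<n\pi/2$. This forces $\arctan(U''/w^3)\to\pi/2$ at a finite radius $r_*$, with $U'(r_*)<\infty$.

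\emph{Step 2: blow-up rate.} Let $g(r)=\arctan(w^3/U'')$ be the radial gap. Differentiating the ODE gives $g'(r)=\tfrac{d}{dr}(n-1)\arctan(U'/(rw))$, which is $\asymp U''$ near $r_*$. Since $g\approx w^3/U''$, this yields $\tfrac{d}{dr}(1/U'')\asymp -U''$. Hence
\[
U''(r)\asymp (r_*-r)^{-1/2},\qquad U'''(r)\asymp(r_*-r)^{-3/2},
\]
and $U'$ stays bounded, so the $C^1$ bounds are uniform. I would justify these asymptotics carefully by a comparison argument on the ODE; this is the main technical obstacle.

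\emph{Step 3: the family.} Fix a ball $\Om=B_\rho(0)$ with $\rho<r_*$ and set $u_c(x)=U(|x-c|)$ for $c=-(r_*-\rho-\eps)e_1$. Then $\overline\Om\subset B_{r_*}(c)$, and the closest approach to the singular sphere is at distance $\eps$, attained at $x_0=\rho e_1$. Each $u_c$ is smooth and strictly convex on $\overline\Om$.

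At $x\in\partial\Om$ the quantity $\mathcal B(x,u_\nu)$ is computed from the boundary blocks. Using Theorem \ref{thm:exact-gap-intro} with bounded $\alpha,\beta$, one gets $\delta(x)\asymp 1/u_{\nu\nu}(x)$. Thus $\delta_*(u_c)\asymp U''(r_*-\eps)^{-1}\asymp\eps^{1/2}$. The map $\eps\mapsto\delta_*(u_c)$ is continuous and tends to $0$ as $\eps\to0$, so the intermediate value theorem gives, for each small $\delta$, an $\eps(\delta)$ with $\delta_*=\delta$ exactly. At that parameter, $\sup_{\partial\Om}u_{\nu\nu}\asymp\delta^{-1}$.

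\emph{Step 4: boundary estimates.} Parametrize $\partial\Om$ near $x_0$ by arclength $s$. Then $r(s)=|x(s)-c|=r_*-\eps-\kappa s^2+O(s^4)$ with $\kappa>0$ fixed, and
\[
r_*-r(s)\asymp \eps+s^2 ,
\]
while away from $x_0$ everything is uniformly smooth.

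\emph{Mixed derivatives.} $D^2u(\xi,\nu)=U''(e_r\cdot\xi)(e_r\cdot\nu)+(U'/r)\,\xi\cdot\nu$ with $|e_r\cdot\xi|\lesssim|s|$. This gives the bound $\lesssim |s|(\eps+s^2)^{-1/2}\le C$.

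\emph{Trace up to third order.} By Faà di Bruno, the trace $\varphi(s)=U(r(s))$ has
\[
\varphi'''=U'''r'^3+3U''r'r''+U'r''' .
\]
With $|r'|\lesssim|s|$, each term is $\lesssim |s|^3(\eps+s^2)^{-3/2}+|s|(\eps+s^2)^{-1/2}+1\le C$. Hence $\|u_\delta\|_{C^3(\partial\Om)}$ is uniformly bounded.

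\emph{Fourth order.} $\varphi''''$ contains $3U''r''^2$. At $s=0$ (where $r'=0$) this is $\asymp U''(r_*-\eps)\asymp\delta^{-1}$, and the other terms, $U'r''''$ and the terms carrying factors of $r'$, are bounded there. A parallel count using $U''''\asymp(r_*-r)^{-5/2}$ bounds $\varphi''''$ by $C\delta^{-1}$ everywhere. So the $C^4$ trace norm diverges exactly at order $\delta^{-1}$.
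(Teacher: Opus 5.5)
\textbf{Step 1 fails, and the rest of the construction depends on it.}

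Write $s=U'/w$ and $\zeta=s/r$. Then $\kappa_r=s'=\zeta+r\zeta'$ and $\kappa_T=\zeta$, and the equation becomes $r\zeta'=h(\zeta):=\tan\bigl(\theta-(n-1)\arctan\zeta\bigr)-\zeta$. Smoothness at the origin forces $\zeta(0)=\lambda=\tan(\theta/n)$. This value is an equilibrium of $h$.

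So your maximal solution is $\zeta\equiv\tan(\theta/n)$, i.e.\ the spherical cap $U=-\sqrt{R^2-r^2}$ with $R=\cot(\theta/n)$. Its principal curvatures are identically $\tan(\theta/n)$. It is the only solution with $U'(0)=0$: since $h'(\tan(\theta/n))=-n$, any other trajectory behaves like $Cr^{-n}$ near $0$. Its maximal radius is $R$, where $U'\to\infty$ while $U''/w^3$ stays equal to $\tan(\theta/n)$. There is no $r_*$ with radial curvature blow-up and bounded gradient.

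Your monotonicity heuristic is reversed. At the origin the tangential angle is $(n-1)\theta/n>\theta-\pi/2$, so reaching $\arctan\kappa_r=\pi/2$ requires it to \emph{decrease} to $\theta-\pi/2$.

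The same sign slip appears in Step 2. From $g=\pi/2-\theta+(n-1)\arctan\zeta$ you get $g'=(n-1)(\kappa_r-\zeta)/\bigl(r(1+\zeta^2)\bigr)>0$ when $\kappa_r$ is large. So $1/U''$ \emph{increases} with $r$, and $(1/U'')^2\asymp r-r_{\min}$, not $r_*-r$. Because $\tan(\theta/n)$ is attracting for increasing $r$, a convex radial solution can lose curvature control only on an \emph{inner} sphere, approached as $r$ decreases.

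\textbf{Repair.} Put the singular sphere inside. Take a radial solution on an annulus $\{R_0\le r\le R_1\}$ that is singular on $|x|=R_0$, and a ball lying outside that sphere, tangent to it at $x_0=R_0e_1$. Along $\partial\Om$ near $x_0$ this gives $r-R_0\asymp\eps+s^2$.

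This is the paper's geometry, except that the paper does not translate. It varies the initial value $\zeta_\delta(R_0)=\tan\bigl(\vartheta_*+\delta/(n-1)\bigr)$, so that $\kappa_r(R_0)=\cot\delta$ exactly. It then gets $\delta_*(u_\delta)=\delta$ exactly from Lemma~\ref{lem:rank-one}: the gap at every boundary point is at least $\arctan(1/\kappa_{\max})$. Since $\kappa_r$ decreases in $r$, equality holds at $x_0$, and no intermediate value argument is needed.

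With the inner-sphere geometry, your Step 3 goes through essentially as written: comparability via Theorem~\ref{thm:exact-gap-intro} plus continuity in the parameter. Your Step 4 derivative counts also go through with $r-R_0$ in place of $r_*-r$. One small correction: the mixed-derivative coefficient is $U''-U'/r$, and the $\xi\cdot\nu$ term vanishes.
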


For $0<\theta<\pi/2$, convexity gives $\mathcal B\geq\pi/2$, so the boundary gap is at least $\pi/2-\theta$.  Hence gap collapse on the convex branch is a genuinely high-phase phenomenon.

The paper is organized as follows.  Section \ref{sec:linearization} records the homotopy, the coefficient bounds, gradient localization, and the failure of joint jet concavity.  Section \ref{sec:MRC} proves the mixed estimate and the recession sharpness of (MRC), and gives stronger but easier-to-check sufficient conditions.  Section \ref{sec:normal-gap} proves the exact Schur-complement identity and the quantitative double-normal estimate.  Section \ref{sec:global} establishes the mean-curvature maximum principle and the global compactness alternative.  Section \ref{sec:radial} contains two sharpness models: a radial family attaining the optimal boundary-gap rate and a rank-loss example showing that a strict lower subsolution does not force strict convexity.

\section{The homotopy and its linearization}\label{sec:linearization}

In this section we record the operator identities used throughout the paper.  They are stated on the closed convex branch whenever possible; approximation gives the same formulas from the strictly convex branch.

The homotopy \eqref{eq:homotopy-intro} is obtained by applying the graph operator to the vertically scaled graph of $tu$ and then dividing its principal curvatures by $t$.  Its first useful property is monotonicity.

\begin{lemma}\label{lem:t-monotone}
If $X\geq0$ and $0\leq s\leq t\leq1$, then
\begin{equation}\label{eq:t-monotone}
 G_t(p,X)\leq G_s(p,X).
\end{equation}
\end{lemma}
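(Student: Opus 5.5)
The plan is to reduce \eqref{eq:t-monotone} to a Loewner-order comparison of two positive semidefinite matrices. Since $F$ is a sum of an increasing function of the eigenvalues, it is then enough to invoke Weyl's monotonicity principle.

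\emph{Step 1: a similarity reduction.} Since $X\geq0$, write $X=X^{1/2}X^{1/2}$. For any matrices $P,R$, the products $PR$ and $RP$ have the same nonzero eigenvalues with multiplicity, and both are $n\times n$ here, so they have the same spectrum. Applying this with $P=B_tX^{1/2}$ and $R=X^{1/2}B_t$ shows that $A_t(p,X)=w_t^{-1}B_tXB_t$ and
\[
 \widetilde A_t:=\frac{1}{w_t(p)}\,X^{1/2}B_t(p)^2X^{1/2}
 =\frac{1}{w_t(p)}\,X^{1/2}\bigl(I+t^2p\otimes p\bigr)^{-1}X^{1/2}
\]
have the same eigenvalues. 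Both matrices are symmetric and positive semidefinite. Hence $G_t(p,X)=F(\widetilde A_t)$.

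\emph{Step 2: Loewner monotonicity in $t$.} Let $0\leq s\leq t\leq1$.
\begin{itemize}
\item Since $I+s^2p\otimes p\leq I+t^2p\otimes p$ and matrix inversion is order-reversing on positive definite matrices, we get $(I+t^2p\otimes p)^{-1}\leq(I+s^2p\otimes p)^{-1}$.
\item Congruence by $X^{1/2}$ preserves the Loewner order, so $X^{1/2}(I+t^2p\otimes p)^{-1}X^{1/2}\leq X^{1/2}(I+s^2p\otimes p)^{-1}X^{1/2}$.
\item Both sides of this inequality are positive semidefinite.
\item The scalar factors satisfy $0<1/w_t(p)\leq 1/w_s(p)$.
\end{itemize}
Combining these, we obtain
\[
 \widetilde A_t\leq \frac{1}{w_t}X^{1/2}(I+s^2p\otimes p)^{-1}X^{1/2}\leq \widetilde A_s .
\]
The second inequality uses only that the middle matrix is positive semidefinite and that its scalar factor increases from $1/w_t$ to $1/w_s$.

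\emph{Step 3: conclusion.} By Weyl's monotonicity theorem (Courant--Fischer), $\widetilde A_t\leq\widetilde A_s$ implies $\lambda_i(\widetilde A_t)\leq\lambda_i(\widetilde A_s)$ for each ordered index $i$. Since $\arctan$ is increasing, summing over $i$ gives $F(\widetilde A_t)\leq F(\widetilde A_s)$, which is \eqref{eq:t-monotone}.

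I do not expect a real obstacle. The only point needing care is Step 1: the conjugated form $B_tXB_t$ is not itself monotone in $t$ in the Loewner sense, so one must pass to the similar matrix $X^{1/2}B_t^2X^{1/2}$. This is exactly where the hypothesis $X\geq0$ enters, since it guarantees that $X^{1/2}$ exists. The case $p=0$ is trivial, because then $G_t$ does not depend on $t$.
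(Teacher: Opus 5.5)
Your proof is correct, and it takes a somewhat different route from the paper's.

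The paper rotates so that $p=|p|e_1$. Then $B_t=\diag(w_t^{-1},1,\ldots,1)$, and one can factor $B_t=C_{s,t}B_s$ with $C_{s,t}=\diag(w_s/w_t,1,\ldots,1)$. This gives the explicit relation $A_t=(w_s/w_t)\,C_{s,t}A_sC_{s,t}$: $A_t$ arises from $A_s$ by a scalar factor at most one and a congruence by a diagonal contraction that compresses only the gradient direction. The paper then invokes the min--max principle.

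As you observe, $CA_sC\leq A_s$ fails in the Loewner order in general. So the paper's min--max step tacitly relies on $A_s\geq0$, for instance through $\lambda_j(CA_sC)=\lambda_j(A_s^{1/2}C^2A_s^{1/2})$ or an Ostrowski-type argument.

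You instead pass at the outset to the similar matrix $w_t^{-1}X^{1/2}(I+t^2p\otimes p)^{-1}X^{1/2}$. There all of the $t$-dependence sits in a middle factor that is genuinely Loewner-monotone. Weyl's monotonicity theorem then applies directly, the argument is coordinate-free, and the exact place where $X\geq0$ is needed is explicit.

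In exchange, the paper's factorization shows concretely how the homotopy acts on the curvature matrix: $A_s$ is compressed by $w_s/w_t$ in the direction of $p$ and left unchanged orthogonally, apart from the overall scalar factor. That structural picture is less visible in your formulation.
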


\begin{proof}
After an orthogonal change of coordinates, assume $p=|p|e_1$.  Then
\[
 B_t=\diag(w_t^{-1},1,\ldots,1).
\]
Set
\[
 C_{s,t}=\diag(w_s/w_t,1,\ldots,1).
\]
Since $B_t=C_{s,t}B_s$, we have
\begin{equation*}
 A_t(p,X)=\frac{w_s}{w_t}
 C_{s,t}A_s(p,X)C_{s,t}.
\end{equation*}
Both the scalar factor and the symmetric contraction $C_{s,t}$ lie between $0$ and $1$.  The min--max principle therefore gives
$\lambda_j(A_t)\leq\lambda_j(A_s)$ for every $j$.  Since $\arctan$ is increasing, \eqref{eq:t-monotone} follows.
\end{proof}

In particular, a convex strict lower subsolution for $t=1$ remains a strict lower subsolution for the whole homotopy.  The comparison consequence does not require any joint concavity.

\begin{lemma}\label{lem:comparison}
Let $u$ solve \eqref{eq:Pt-intro}.  Suppose that
\begin{equation*}
 G_t(D\underline u,D^2\underline u)\geq\theta+\eta
 \quad\text{in }\Om,
 \qquad \underline u=u\quad\text{on }\partial\Om
\end{equation*}
for some $\eta>0$.  Then $u\geq\underline u$ in $\Om$.
\end{lemma}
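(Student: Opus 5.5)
The argument is a standard first-contact comparison.  It uses only ellipticity of $G_t$ in $X$ and continuity in $p$; no concavity is needed, since at an interior minimum of $u-\underline u$ the gradients coincide.

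Suppose $\min_{\overline\Om}(u-\underline u)<0$.  Since $u=\underline u$ on $\partial\Om$, the minimum is attained at some interior point $x_0\in\Om$.  At $x_0$ we have
\[
 Du(x_0)=D\underline u(x_0)=:p,
 \qquad D^2u(x_0)\geq D^2\underline u(x_0).
\]
Set $X=D^2u(x_0)$ and $\underline X=D^2\underline u(x_0)$.  Both are nonnegative, because $u$ and $\underline u$ are convex.

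Next compare the two phases at the common gradient $p$.  The map $X\mapsto A_t(p,X)=w_t^{-1}B_tXB_t$ is a congruence by the positive definite matrix $B_t(p)$, scaled by $w_t(p)^{-1}>0$.  Hence $X\geq\underline X$ gives $A_t(p,X)\geq A_t(p,\underline X)$ in the Loewner order.  By the min--max principle, as in the proof of Lemma \ref{lem:t-monotone}, each ordered eigenvalue satisfies $\lambda_j(A_t(p,X))\geq\lambda_j(A_t(p,\underline X))$.  Since $\arctan$ is increasing, this yields $G_t(p,X)\geq G_t(p,\underline X)$.

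The equation for $u$ and the strict subsolution inequality for $\underline u$ then give
\[
 \theta=G_t(p,X)\geq G_t(p,\underline X)\geq\theta+\eta,
\]
which is impossible since $\eta>0$.  Hence $u\geq\underline u$ in $\Om$.

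The only point needing care is that the comparison happens at equal gradients.  This is exactly why the failure of joint concavity in $(p,X)$ plays no role: the $p$-dependence is frozen at $x_0$, and only monotonicity in $X$ is used.  The strict margin $\eta>0$ removes any need for a perturbation argument.  If $u$ were only a non-strict comparison function, one would instead replace $\underline u$ by $\underline u-\epsilon(|x|^2-R^2)$ and use continuity.  That refinement is unnecessary here.
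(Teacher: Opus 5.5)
Your argument is correct and is essentially the paper's proof. At an interior negative minimum of $u-\underline u$ the gradients coincide and $D^2u\geq D^2\underline u$, so monotonicity of $G_t(p,\cdot)$ at the frozen gradient gives $\theta\geq\theta+\eta$, and no joint concavity is needed.

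Your closing aside, which the proof does not use, has the sign backwards: to make a non-strict subsolution strict you would add $\epsilon(|x|^2-R^2)$, not subtract it. Even with the correct sign, the gradient shift $2\epsilon x$ enters $G_t$ through $b$, so continuity alone does not guarantee a strict inequality.
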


\begin{proof}
If $\underline u-u$ had a positive interior maximum at $x_0$, then
\[
 D\underline u(x_0)=Du(x_0),
 \qquad D^2\underline u(x_0)\leq D^2u(x_0).
\]
The gradients agree, so ellipticity in the Hessian variable gives
\[
 \theta+\eta
 \leq G_t(D\underline u,D^2\underline u)
 \leq G_t(Du,D^2u)=\theta,
\]
a contradiction.
\end{proof}

We next record the coefficient bounds on the level set.

Let $(t,p,X)$ be a level jet as in \eqref{eq:level-jet-intro}, and abbreviate
\[
 A=A_t(p,X),\qquad F_A=(I+A^2)^{-1}.
\]
Differentiation in the Hessian variable gives
\begin{equation}\label{eq:a-formula}
 a=G_{t,X}(p,X)=\frac1{w_t}B_tF_AB_t.
\end{equation}
In particular, $a>0$ at every finite jet.

\begin{lemma}\label{lem:coefficient-bounds}
Suppose $p\in K$ and $X\geq0$ with $G_t(p,X)=\theta$.  There are constants $c_0>0$ and $C_0<\infty$, depending only on $n,\theta$ and $K$, such that
\begin{gather}
 c_0\leq\tr a\leq n,\label{eq:trace-a-bounds}\\
 |b|\leq C_0,\label{eq:b-bound}\\
 0\leq a:X
 =\tr\bigl(A(I+A^2)^{-1}\bigr)
 =\sum_{i=1}^n\frac{\kappa_i}{1+\kappa_i^2}
 \leq\frac n2.\label{eq:aX-bound}
\end{gather}
Consequently, in a fixed boundary collar, we have
\begin{equation}\label{eq:J-ell-Q-bounds}
 |\cJ|+|\ell|+Q\leq C_1
\end{equation}
where $C_1$ depends only on $n$, $\theta$, $K$, the
geometry of the fixed boundary collar, and $\|\underline u\|_{C^2}$.
\end{lemma}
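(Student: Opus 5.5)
The plan is to work in principal coordinates for $A=A_t(p,X)$ and use the explicit formula \eqref{eq:a-formula}. Since $p\in K$ and $K$ is compact, we have $1\le w_t(p)\le W:=\sqrt{1+\max_K|p|^2}$. Also $B_t(p)$ has eigenvalues in $[w_t^{-1},1]$, i.e.\ $w_t^{-2}I\le B_t^2\le I$.

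\textbf{Trace bounds.} Since $0<F_A\le I$, we get $\tr a=w_t^{-1}\tr(B_tF_AB_t)\le \tr F_A\le n$. For the lower bound,
\[
\tr a\ge w_t^{-3}\tr F_A=w_t^{-3}\sum_i(1+\kappa_i^2)^{-1}.
\]
The level condition $\sum\arctan\kappa_i=\theta<n\pi/2$ with $\kappa_i\ge0$ forces $\min_i\kappa_i\le \tan(\theta/n)$. Hence $\tr F_A\ge \cos^2(\theta/n)$, and so $c_0=W^{-3}\cos^2(\theta/n)$ works.

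\textbf{The identity for $a:X$.} From \eqref{eq:a-formula} and cyclicity of the trace,
\[
a:X=\tr\bigl(F_A\,w_t^{-1}B_tXB_t\bigr)=\tr\bigl(A(I+A^2)^{-1}\bigr)=\sum_i\frac{\kappa_i}{1+\kappa_i^2}\in\Bigl[0,\frac n2\Bigr],
\]
where the bound uses $\kappa_i\ge0$.

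\textbf{Bound on $b$: the main step.} Differentiating in $p$ gives
\[
b^k=F_A:\partial_{p_k}A=\tr\bigl(F_A\,\partial_{p_k}(w_t^{-1}B_t)\,XB_t\bigr)+\tr\bigl(F_A\,w_t^{-1}B_tX\,\partial_{p_k}B_t\bigr).
\]
The issue is that $X$ is unbounded, so we rewrite $X=w_tB_t^{-1}AB_t^{-1}$. Then each term has the form $\tr(F_A\,E_k\,A)$ or $\tr(F_A\,A\,E_k')$. Here
\[
E_k=w_t\,\partial_{p_k}(w_t^{-1}B_t)\,B_t^{-1},\qquad E_k'=B_t^{-1}\partial_{p_k}B_t,
\]
and these are smooth in $(t,p)\in[0,1]\times K$, hence bounded in norm by $C(K)$. (Note that $B_t^{-1}=(I+t^2p\otimes p)^{1/2}$ is bounded.) In the eigenbasis of $A$, $AF_A=F_AA=\diag(\kappa_i/(1+\kappa_i^2))$ has norm at most $1/2$. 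Therefore $|\tr(F_AE_kA)|=|\tr(E_kAF_A)|\le n|E_k|/2$, and similarly for the other term. This gives $|b|\le C_0(n,K)$.

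The obstacle is exactly this absorption of $X$ into $A F_A$; I would double-check the derivative of $B_t$ via the smooth functional calculus. It is smooth because $I+t^2p\otimes p\ge I$.

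\textbf{Bounds on $\cJ,\ell,Q$.} Write
\[
\cJ=a:D^2\underline u-a:X+b\cdot(D\underline u-p).
\]
Using $|a:Y|\le\tr a\,|Y|$ for $a\ge0$, together with the bounds above, $|p|\le\max_K|p|$ and $\|\underline u\|_{C^2}$, this is bounded. Next, $|\ell|\le n\|D^2d\|+C_0$ on the collar. Finally, $0\le Q\le \tr a\,|p-D\underline u|^2\le n(\max_K|p|+\|\underline u\|_{C^1})^2$. Summing these gives \eqref{eq:J-ell-Q-bounds}.
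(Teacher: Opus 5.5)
Your proposal is correct and follows essentially the same route as the paper: the trace bounds come from $w_t^{-2}I\le B_t^2\le I$ together with $\min_i\kappa_i\le\tan(\theta/n)$, and the drift bound comes from rewriting $\partial_{p_k}A$ as $E_kA+AE_k'$ and using $\|AF_A\|\le\frac12$. The only cosmetic difference is that your $E_k$ absorbs the scalar term $\omega_k I$, which the paper keeps separate and controls via $F_A:A\le n/2$.
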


\begin{proof}
The upper trace bound follows at once from \eqref{eq:a-formula}, since
$0<B_t\leq I$, $w_t\geq1$, and $0<F_A\leq I$.  For the lower bound, write
\[
 \gamma_i=\arctan\kappa_i\in[0,\pi/2),
 \qquad \sum_i\gamma_i=\theta.
\]
At least one $\gamma_i$ is no larger than $\theta/n$, and therefore
\[
 \tr F_A=\sum_i\cos^2\gamma_i
 \geq\cos^2(\theta/n).
\]
Moreover, $B_t^2\geq w_t^{-2}I$.  Hence
\begin{equation}\label{eq:c0-explicit}
 \tr a
 \geq\frac{\cos^2(\theta/n)}{w_t^3}
 \geq
 \frac{\cos^2(\theta/n)}{(1+\sup_K|p|^2)^{3/2}}
 =:c_0.
\end{equation}

We next estimate the gradient coefficient.  Write
$p=(p_1,\ldots,p_n)$ and, for $r=1,\ldots,n$, put
\[
 E_r=(\partial_{p_r}B_t)B_t^{-1},
 \qquad \omega_r=\partial_{p_r}\log(w_t^{-1}).
\]
On $[0,1]\times K$ these matrices and scalars are uniformly bounded.  A direct differentiation gives
\begin{equation*}
 \partial_{p_r}A
 =\omega_rA+E_rA+AE_r^T.
\end{equation*}
Since $b^r=\partial_{p_r}G_t=F_A:\partial_{p_r}A$, we obtain
\[
 b^r=\omega_r(F_A:A)+F_A:(E_rA)+F_A:(AE_r^T).
\]
The matrix $F_A$ commutes with $A$, and $\|AF_A\|\leq\frac12$,
while $E_r$ and $\omega_r$ are uniformly bounded on $[0,1]\times K$.
Moreover, $0\leq F_A:A\leq n/2$.  Hence every component $b^r$ is
uniformly bounded, and summing over $r$ proves \eqref{eq:b-bound}.  Finally,
$A_t$ is linear in $X$, so Euler's identity gives
\[
 a:X=F_A:A,
\]
which is \eqref{eq:aX-bound}.  The definitions of $\cJ,\ell,Q$, the compactness of $K$, and $0<a\leq I$ now imply \eqref{eq:J-ell-Q-bounds}.
\end{proof}

The coefficient matrix can lose ellipticity in a particular direction only at infinity.

\begin{lemma}\label{lem:finite-coercive}
Fix $R<\infty$ and a boundary collar.  On the compact set of level jets satisfying $|X|\leq R$ there exists $\lambda_R>0$ such that
\begin{equation*}
 a(Dd,Dd)\geq\lambda_R.
\end{equation*}
\end{lemma}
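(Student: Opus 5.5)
The plan is to get the bound from explicit lower bounds on each factor in formula \eqref{eq:a-formula}, evaluated on the compact set of admissible jets. Here $a=w_t^{-1}B_tF_AB_t$, so
\[
 a(Dd,Dd)=\frac1{w_t}\,F_A(B_tDd,B_tDd).
\]
Since $|Dd|=1$ in the collar, it suffices to bound each factor from below.

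First, because $p\in K$ and $t\in[0,1]$, we have $w_t\leq(1+\sup_K|p|^2)^{1/2}$. We also have $B_t^2=(I+t^2p\otimes p)^{-1}\geq w_t^{-2}I$, which gives $|B_tDd|\geq w_t^{-1}$.

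Second, $F_A=(I+A^2)^{-1}\geq(1+|A|^2)^{-1}I$. It remains to bound $|A|$, where $A=w_t^{-1}B_tXB_t$. Using $0<B_t\leq I$ and $w_t\geq1$, we get $|A|\leq|X|\leq R$.

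Combining these,
\[
 a(Dd,Dd)\geq\frac{1}{(1+\sup_K|p|^2)^{3/2}(1+R^2)}=:\lambda_R>0.
\]
This holds uniformly in $t$, in $x$ in the collar, and over all level jets with $|X|\leq R$. The level-set constraint $G_t=\theta$ and the condition $X\geq0$ are not even needed. Alternatively, one could argue by compactness: the map $(t,p,X,x)\mapsto a(Dd,Dd)$ is continuous and positive on the compact set (note $a>0$ at every finite jet), so it attains a positive minimum. The explicit bound is preferable because it shows exactly how $\lambda_R$ depends on $R$ and $K$.

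There is no real obstacle in this argument. The only point that needs care is using the operator-norm convention so that $|A|\leq|X|$ is justified. This follows from $\|B_t\|\leq1$ and $w_t\geq1$.
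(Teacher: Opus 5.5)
Your proof is correct. It takes a different route from the paper's, namely the one you mention only as an alternative.

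The paper argues purely by compactness. By \eqref{eq:a-formula}, $a$ is positive definite at every finite jet. The map $(x,t,p,X)\mapsto a(Dd,Dd)$ is continuous on the compact set of level jets with $|X|\leq R$, whose closure allows $X\geq0$. Hence a positive minimum is attained, with no explicit constant.

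You instead bound the factors of $w_t^{-1}(B_tDd)^TF_A(B_tDd)$ separately. You use $B_t^2\geq w_t^{-2}I$, the same inequality the paper uses for $c_0$ in \eqref{eq:c0-explicit}. You also use $F_A\geq(1+|A|^2)^{-1}I$ together with $|A|\leq|X|$. This gives
\[
 \lambda_R=(1+\sup_K|p|^2)^{-3/2}(1+R^2)^{-1}.
\]

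What your version buys:
\begin{itemize}
\item An explicit constant showing that normal coercivity degenerates at most like $R^{-2}$. This makes transparent why it can be lost only along sequences with $|X_m|\to\infty$, which is the motivation for the doubly degenerate recession sequences.
\item Proof that neither the level-set constraint $G_t=\theta$, nor $X\geq0$, nor compactness of the jet set is needed.
\item Immediate uniformity of $\lambda_R$ over families with fixed $K$ and $R$, as used in Lemma \ref{lem:strict-local-barrier}.
\end{itemize}

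The paper's continuity argument is shorter but says nothing about how $\lambda_R$ depends on $R$. Your norm caveat is harmless: the operator norm is the paper's stated convention, and with the Frobenius norm one still has $\|A\|_{op}\leq\|A\|_F\leq\|X\|_F$.
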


\begin{proof}
The closure of the indicated jet set allows $X\geq0$, but $a$ remains positive definite at every finite $X$ by \eqref{eq:a-formula}.  Since $|Dd|=1$ and all remaining parameters range in compact sets, the conclusion follows by continuity.
\end{proof}

For a family of convex solutions, the compact set $K$ in Definition \ref{def:ddrs-intro} can be determined from a boundary normal window.  Suppose
\begin{equation}\label{eq:normal-window}
 \underline p(x)\leq u_\nu(x)\leq\overline p(x)
 \qquad (x\in\partial\Om).
\end{equation}
Since the tangential gradient equals $D_T\varphi$, set
\begin{equation*}
 K_\partial=
 \left\{D_T\varphi(x)+s\nu(x):
 x\in\partial\Om,
 \ s\in[\underline p(x),\overline p(x)]\right\},
 \qquad
 K=\overline{\operatorname{co}}K_\partial.
\end{equation*}
Here $\operatorname{co}K_\partial$ denotes the convex hull of
$K_\partial$, so $K$ is its closed convex hull.

\begin{lemma}\label{lem:gradient-localization}
If $u\in C^1(\overline\Om)$ and $D^2u\geq0$ in $\Om$, then the boundary window \eqref{eq:normal-window} implies
\begin{equation}\label{eq:gradient-localization}
 Du(\overline\Om)\subset K.
\end{equation}
\end{lemma}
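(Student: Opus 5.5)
The plan is to reduce \eqref{eq:gradient-localization} to a statement about linear functionals: a closed convex set is the intersection of its supporting half-spaces.
Fix a unit vector $e\in\R^n$ and set
\[
 h_K(e)=\sup_{p\in K}p\cdot e=\max_{p\in K_\partial}p\cdot e .
\]
It suffices to show that
\[
 \max_{x\in\overline\Om}u_e(x)\leq h_K(e)\qquad\text{for every unit } e,
\]
where $u_e=Du\cdot e$. Taking the intersection over all $e$ then places $Du(\overline\Om)$ inside $\overline{\operatorname{co}}K_\partial=K$.

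\emph{Step 1: reduce to the boundary.} Fix $x\in\overline\Om$ and consider the ray $x+\lambda e$, $\lambda\geq0$. Since $\Om$ is bounded, the ray leaves $\overline\Om$. I would let $\lambda_*$ be the first exit parameter and put $y=x+\lambda_* e\in\partial\Om$. If $\Om$ is not convex, the segment $[x,y]$ may still lie in $\overline\Om$ because $\lambda_*$ is the first exit. Convexity of $u$ along the segment (from $D^2u\geq0$ in $\Om$, and by continuity of $Du$ up to the closure) shows that $\lambda\mapsto u_e(x+\lambda e)$ is nondecreasing on $[0,\lambda_*]$. Hence
\[
 u_e(x)\leq u_e(y).
\]
Points of $[x,y]$ that touch $\partial\Om$ before $y$ cause no trouble: the derivative of $\lambda\mapsto u_e(x+\lambda e)$ is $D^2u(e,e)\geq0$ in the interior of $\Om$, and monotonicity extends across boundary touching points by continuity of $Du$.

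\emph{Step 2: read off the boundary value.} At $y\in\partial\Om$ write $Du(y)=D_T\varphi(y)+u_\nu(y)\nu(y)$, with $u_\nu(y)\in[\underline p(y),\overline p(y)]$ by \eqref{eq:normal-window}. Then $Du(y)\in K_\partial$, so
\[
 u_e(y)\leq h_K(e).
\]
Combining this with Step 1 gives $u_e(x)\leq h_K(e)$ for all $x\in\overline\Om$ and all unit $e$, which is the desired containment. In fact this argument shows $Du(y)\in K_\partial$ for every boundary point $y$ directly, and the interior is controlled by the supporting-functional comparison.

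\emph{Main obstacle.} The only delicate point is the monotonicity along segments when $\Om$ is not convex, since the segment may graze the boundary. I would handle it by replacing $x$ with an interior point and using the first exit point, so the open segment stays in $\Om$. Alternatively, one can apply the maximum principle to the function $u_e$, which satisfies no PDE but is monotone along rays in convex subsets. Continuity of $Du$ on $\overline\Om$ then passes the inequality to the limit, and boundary points $x$ are covered by the same limiting argument.
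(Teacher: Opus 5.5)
Your proposal is correct and is essentially the paper's proof. Both use monotonicity of $e\cdot Du$ along the ray from an interior point $x$ to the first point $y$ where it hits $\partial\Omega$, note that $Du(y)\in K_\partial$ by the normal window, apply the support-function characterization of the closed convex hull, and pass to $\overline\Omega$ by continuity. Keep the version in your ``main obstacle'' paragraph, where you start at an interior point so the open segment lies in $\Omega$; that is exactly the paper's argument. The Step~1 version is weaker, because continuity of $Du$ along the segment alone does not justify monotonicity across a possibly large set of grazing points.
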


\begin{proof}
Fix an interior point $x$ and a unit vector $e$.  Let $I=(a,b)$ be the
connected component of
\[
 \{s\in\R:x+se\in\Om\}
\]
which contains $0$, and put $y=x+be\in\partial\Om$.  Convexity of $u$
implies that the one-dimensional function
$s\mapsto e\cdot Du(x+se)$ is nondecreasing on $I$.  Therefore
\[
 e\cdot Du(x)\leq e\cdot Du(y)
 \leq\sup_{q\in K_\partial}e\cdot q
 =h_{K_\partial}(e).
\]
Here $h_E(e):=\sup_{q\in E}e\cdot q$ denotes the support function of a set
$E\subset\R^n$. The geometric meaning of this argument is illustrated in
Figure~\ref{fig:gradient-localization}.
This holds for every unit vector $e$.  Since
$h_{K_\partial}=h_{\overline{\operatorname{co}}K_\partial}$, the
support-function characterization of a closed convex set gives
$Du(x)\in K$.  Continuity extends the inclusion to $\overline\Om$, proving
\eqref{eq:gradient-localization}.
\end{proof}

\begin{figure}[htbp]
	\centering
	\includegraphics[width=0.8\textwidth]
	{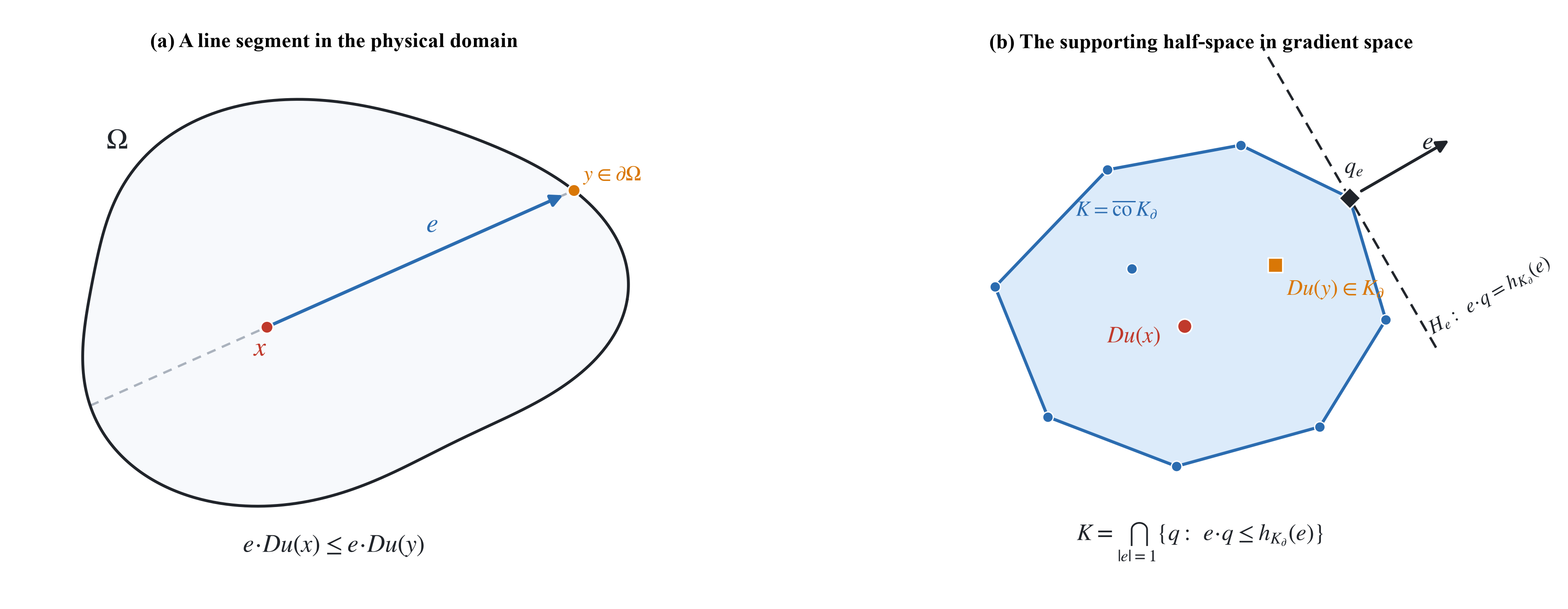}
	\caption{The support-function proof of $Du(x)\in K$.}
	\label{fig:gradient-localization}
\end{figure}

 For example, a strict lower subsolution gives the lower normal bound directly: Lemma \ref{lem:comparison} yields $w=u-\underline u\geq0$, and $w=0$ on the boundary implies the one-sided inequality $w_\nu\geq0$.  An upper bound may come from a fixed supersolution or from any independent first-order estimate.  The mixed recession condition then depends only on this localized set, not on an unspecified global gradient ball.

We finish the section by explaining why a constant-phase subsolution alone is insufficient.  For fixed $p$, the map $X\mapsto G_t(p,X)$ is concave on $X\geq0$.  It is tempting to apply this concavity while both $Du$ and $D^2u$ vary between a solution and a lower subsolution.  The following elementary calculation shows why this is not valid.

\begin{proposition}\label{prop:not-joint-concave}
For $n=2$, $t=1$, and $X=I$, the function
\begin{equation}\label{eq:psi-r}
 \psi(r)=G_1((r,0),I)
 =\arctan(1+r^2)^{-3/2}
  +\arctan(1+r^2)^{-1/2}
\end{equation}
is strictly convex for every $r\geq1$.  In particular, the map $(p,X)\mapsto G_t(p,X)$ is not jointly
concave in general, already for $t=1$.
\end{proposition}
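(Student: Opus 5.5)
The plan is to compute $A_1((r,0),I)$ explicitly, reduce $\psi$ to a sum of two one-variable functions of the same type, and show that each is strictly convex on $[1,\infty)$ by a direct second-derivative computation. Joint non-concavity then follows by restricting to a line in jet space.

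\textbf{Step 1: the explicit form of $\psi$.} Take $p=(r,0)$, so that $w=\sqrt{1+r^2}$. As in the proof of Lemma~\ref{lem:t-monotone}, $B_1(p)=\diag(w^{-1},1)$. With $X=I$ this gives
\[
 A_1(p,I)=w^{-1}B_1^2=\diag(w^{-3},w^{-1}),
\]
and hence \eqref{eq:psi-r}. It is then enough to prove that, for $k\in\{1,3\}$, the function
\[
 f_k(r)=\arctan h_k(r),\qquad h_k(r)=(1+r^2)^{-k/2},
\]
is strictly convex on $[1,\infty)$.

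\textbf{Step 2: the second derivative of $f_k$.} Differentiating $h=h_k$ gives
\[
 h'=-kr(1+r^2)^{-k/2-1},\qquad
 h''=k(1+r^2)^{-k/2-2}\bigl((k+1)r^2-1\bigr),
\]
and
\[
 f_k''=\frac{h''(1+h^2)-2h(h')^2}{(1+h^2)^2}.
\]
After cancelling the common positive factor $k(1+r^2)^{-k/2-2}$, the condition $f_k''>0$ becomes
\[
 \bigl((k+1)r^2-1\bigr)(1+h^2)>2kr^2(1+r^2)^{-k}.
\]

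\textbf{Step 3: checking the inequality for $r\geq1$.} Since $1+h^2>1$ and $(k+1)r^2-1>0$ for $r\geq1$, it suffices to prove the non-strict inequality
\[
 (k+1)r^2-1\geq 2kr^2(1+r^2)^{-k}.
\]
For $k=1$ this reads $(2r^2-1)(1+r^2)\geq 2r^2$, which is equivalent to $2r^4-r^2-1\geq0$, that is, $(r^2-1)(2r^2+1)\geq0$. This holds for $r\geq1$. For $k=3$ we have $(1+r^2)^{-3}\leq 1/8$ when $r\geq1$, so the right side is at most $\tfrac34 r^2$. The left side is $4r^2-1\geq 3r^2$, so the inequality holds.

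The factor $1+h^2>1$ then gives strict inequality. This bookkeeping at $r=1$, where the $k=1$ estimate is otherwise an equality, is the only delicate point. Summing, $\psi''=f_1''+f_3''>0$ on $[1,\infty)$.

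\textbf{Step 4: non-concavity.} The map $r\mapsto\bigl((r,0),I\bigr)$ is affine into $\R^n\times\Sym(n)$. If $(p,X)\mapsto G_1(p,X)$ were concave on the region containing these jets, then its restriction $\psi$ would be concave along this line. That contradicts the strict convexity of $\psi$ on $[1,\infty)$.

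For $t<1$, the same computation with $w_t=\sqrt{1+t^2r^2}$ in place of $w$ should behave similarly for $t$ near $1$. The statement only claims non-concavity for $t=1$, so this extension is not needed.
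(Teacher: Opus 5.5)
Your proposal is correct and follows essentially the same route as the paper. Both arguments compute the second derivative of each summand $\arctan(1+r^2)^{-k/2}$, $k\in\{1,3\}$, explicitly and show it is positive on $[1,\infty)$. The only difference is cosmetic: you use $1+h^2>1$ to reduce to $(r^2-1)(2r^2+1)\geq0$ and a crude bound for $k=3$, whereas the paper keeps the $h^2$ contribution and checks the combined bracket $A_a+(1+r^2)^{-2a}B_a$ directly.
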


\begin{proof}
For $a>0$ let $y_a(r)=(1+r^2)^{-a}$.  A direct differentiation gives
\begin{equation*}
 \frac{d^2}{dr^2}\arctan y_a
 =\frac{2a(1+r^2)^{-a-2}}{(1+y_a^2)^2}
 \left(A_a+(1+r^2)^{-2a}B_a\right),
\end{equation*}
where
\[
 A_a=(2a+1)r^2-1,
 \qquad B_a=(1-2a)r^2-1.
\]
For $a=1/2$, the bracket equals
$2r^2-1-(1+r^2)^{-1}>0$ when $r\geq1$.  For $a=3/2$, it equals
\[
 4r^2-1-(2r^2+1)(1+r^2)^{-3}>0
 \qquad (r\geq1).
\]
Thus both summands in \eqref{eq:psi-r} have positive second derivative.
\end{proof}

\begin{remark}\label{rem:subsolution-vs-J}
The strict phase inequality
\[
 G_t(D\underline u,D^2\underline u)>G_t(Du,D^2u)
\]
 does not imply $\cJ>0$.  Here is an explicit jet calculation.  For $t=1$, take
 \[
  p=(3,0),\quad X=\diag(1/5,2),
  \qquad q=(1,0),\quad Y=\diag(2,1/10).
 \]
 Direct evaluation gives
 \[
  G_1(q,Y)=\arctan\frac1{\sqrt2}
    +\arctan\frac1{10\sqrt2},
  \qquad
  G_1(p,X)=\arctan\frac1{50\sqrt{10}}
    +\arctan\frac2{\sqrt{10}}.
 \]
 Both sums lie in $(0,\pi/2)$, and their tangents are respectively $\frac{22}{19\sqrt2}$ and $\frac{505}{249\sqrt{10}}$.
 The first is larger, so $G_1(q,Y)>G_1(p,X)$,
 whereas the full linearized separation at $(p,X)$ is
 \[
  G_{1,X}(p,X):(Y-X)+G_{1,p}(p,X)\cdot(q-p)
  =-\frac{14201}{50002\sqrt{10}}<0.
 \]
 Thus even strict phase separation has the wrong supporting sign at this pair of convex jets.  Lemma \ref{lem:comparison} remains valid because the gradients agree at a contact point, but a boundary barrier evaluates the full linearization at points where $Du$ and $D\underline u$ need not agree.  The role of (MRC) is to impose only the part of that full linearized separation which is not supplied by the two available quadratic terms.
\end{remark}

\section{Mixed recession compatibility}\label{sec:MRC}

We now prove Theorem \ref{thm:mixed-intro}.  The proof separates finite or normally elliptic jets from the doubly degenerate recession set; the additional hypothesis is used only in the latter regime.

For later use, we first record the compact form of (MRC).  The following elementary observation also justifies the phrase ``common positive margin'' in \eqref{eq:MRC-seq-intro}.

\begin{lemma}\label{lem:MRC-localization}
Assume (MRC) at $x_0$, and fix a corresponding $\tau>0$.  There exist a neighborhood $U$ of $x_0$ and constants
\begin{equation*}
 R<\infty,\qquad \eta>0,\qquad \zeta>0,
 \qquad \eps>0,
\end{equation*}
such that every level jet with
\begin{equation}\label{eq:MRC-local-region}
 x\in U,\qquad |X|\geq R,\qquad
 a(Dd,Dd)\leq\eta,\qquad Q\leq\zeta
\end{equation}
satisfies
\begin{equation}\label{eq:MRC-local-margin}
 \cJ-\tau\ell\geq4\eps.
\end{equation}
\end{lemma}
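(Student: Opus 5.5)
We argue by contradiction with a diagonal extraction, using the (MRC) inequality in its limit form together with compactness of the coefficients. Set
\[
 m_0:=\inf_{(j,g)\in\cR^*_{x_0}}(j+\tau g)>0,
\]
where $\tau=\tau_{x_0}$. If $\cR^*_{x_0}=\varnothing$, the same argument will show that the region \eqref{eq:MRC-local-region} contains no level jets for suitable parameters, and the conclusion then holds vacuously. Otherwise we take $\eps:=m_0/8$ when $m_0<\infty$; in the empty case any $\eps>0$ works.

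Suppose the lemma fails for this $\eps$. Then for every $m$ we can choose $U=B_{1/m}(x_0)\cap\{\text{collar}\}$, $R=m$, $\eta=\zeta=1/m$, and find a level jet $(x_m,t_m,p_m,X_m)$ with
\[
 x_m\in B_{1/m}(x_0),\qquad |X_m|\geq m,\qquad a^{(m)}(Dd(x_m),Dd(x_m))\leq 1/m,\qquad Q^{(m)}\leq 1/m,
\]
but
\[
 \cJ^{(m)}-\tau\ell^{(m)}<4\eps.
\]
This sequence is by definition a doubly degenerate recession sequence based at $x_0$: we have $x_m\to x_0$, $|X_m|\to\infty$, and both degeneracy quantities tend to $0$.

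By Lemma \ref{lem:coefficient-bounds}, the quantities $\cJ^{(m)}$ and $\ell^{(m)}$ are uniformly bounded, with constant $C_1$ from \eqref{eq:J-ell-Q-bounds}. Passing to a subsequence, we obtain a limit
\[
 (\cJ^{(m)},-\ell^{(m)})\to(j,g)\in\cR^*_{x_0}.
\]
In particular $\cR^*_{x_0}\neq\varnothing$, which settles the empty case. In the nonempty case the limit satisfies
\[
 j+\tau g=\lim_{m\to\infty}\bigl(\cJ^{(m)}-\tau\ell^{(m)}\bigr)\leq 4\eps=m_0/2<m_0,
\]
contradicting the definition of $m_0$. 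Hence some finite choice of $(U,R,\eta,\zeta)$ works. If $m_0=+\infty$ while $\cR^*_{x_0}\neq\varnothing$, this cannot happen since $j+\tau g$ is finite; so either $m_0$ is finite, or the set is empty and we are in the vacuous case above.

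The only delicate point is ensuring that the contradicting sequence genuinely lies in the class of level jets used to define $\cR^*_{x_0}$. The jets must have $p_m\in K$, $X_m\geq0$, and $G_{t_m}(p_m,X_m)=\theta$, with $x_m$ in the fixed collar. These conditions hold by construction. The uniform bounds of Lemma \ref{lem:coefficient-bounds} are exactly what make the extraction of a convergent subsequence possible, so no further compactness of the jets themselves is needed. The quantities $\eta$, $\zeta$, $R$ and $U$ can be taken to shrink simultaneously, as above, since a single index $m$ controls all four.
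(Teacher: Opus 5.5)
Your argument is correct and follows the paper's proof. You take a violating jet in each of a sequence of shrinking regions, observe that this is a doubly degenerate recession sequence, use Lemma~\ref{lem:coefficient-bounds} to extract a limit pair in $\cR^*_{x_0}$, and contradict (MRC). The only cosmetic difference is that you fix $\eps=m_0/8$ in advance and treat $\cR^*_{x_0}=\varnothing$ explicitly, whereas the paper uses the violation threshold $1/m$ and reads off the margin afterwards.
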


\begin{proof}
	Suppose that the conclusion is false.  Choose a decreasing sequence of
	neighborhoods $U_m$ of $x_0$ and positive numbers
	\[
	U_m\downarrow\{x_0\},\qquad
	R_m\uparrow\infty,\qquad
	\eta_m,\zeta_m\downarrow0.
	\]
	Since no uniform positive lower bound is available in any of these
	regions, for every $m$ there is a level jet
	$(x_m,t_m,p_m,X_m)$ such that
	\[
	x_m\in U_m,\qquad |X_m|\geq R_m,\qquad
	a^{(m)}(Dd,Dd)\leq\eta_m,\qquad Q^{(m)}\leq\zeta_m,
	\]
	and
	\[
	\cJ^{(m)}-\tau\ell^{(m)}\leq \frac1m.
	\]
	Consequently,
	\[
	x_m\to x_0,\qquad |X_m|\to\infty,\qquad
	a^{(m)}(Dd,Dd)\to0,\qquad Q^{(m)}\to0,
	\]
	so this is a doubly degenerate recession sequence based at $x_0$.
	
	By Lemma \ref{lem:coefficient-bounds}, the sequence
	$(\cJ^{(m)},-\ell^{(m)})$ is bounded.  After passing to a subsequence,
	\[
	(\cJ^{(m)},-\ell^{(m)})\longrightarrow(j,g)
	\in\cR^*_{x_0}.
	\]
	Taking the limit in the preceding inequality gives
	\[
	j+\tau g
	=\lim_{m\to\infty}
	\bigl(\cJ^{(m)}-\tau\ell^{(m)}\bigr)
	\leq0,
	\]
	contradicting \eqref{eq:MRC-intro}.  Hence there is a positive uniform
	margin $c_*$ on some region of the form
	\eqref{eq:MRC-local-region}.  Taking
	$\eps=c_*/4$ yields \eqref{eq:MRC-local-margin}.
\end{proof}

We next construct the concavified local barrier.

Let $u$ be as in Theorem \ref{thm:mixed-intro} and put
\begin{equation*}
 w=u-\underline u\geq0.
\end{equation*}
Since $w=0$ on the boundary and $|Dw|$ is uniformly bounded, integration
along the inward normal segment from the nearest boundary point gives
\begin{equation}\label{eq:w-less-d}
 0\leq w\leq C d
\end{equation}
in a fixed collar.  For $\mu\geq0$, define
\begin{equation*}
 \Phi_\mu(s)=
 \begin{cases}
  (1-e^{-\mu s})/\mu,&\mu>0,\\
  s,&\mu=0.
 \end{cases}
\end{equation*}
Thus
\begin{equation*}
 \Phi_\mu'(s)=e^{-\mu s}>0,
 \qquad
 \Phi_\mu''(s)=-\mu e^{-\mu s}\leq0.
\end{equation*}

Along a solution, let
\begin{equation*}
 \cL=a^{ij}\partial_{ij}+b^k\partial_k
\end{equation*}
be the linearized operator.  At the solution jet,
\begin{equation*}
 \cL w=-\cJ,
 \qquad a(Dw,Dw)=Q.
\end{equation*}

\begin{lemma}\label{lem:strict-local-barrier}
Fix $x_0\in\partial\Om$ and suppose (MRC) holds there.  There are positive constants $\mu,N,\delta,c$ and a neighborhood $U$ of $x_0$ such that
\begin{equation}\label{eq:v-barrier}
 v=\Phi_\mu(w)+\tau d-Nd^2
\end{equation}
satisfies
\begin{equation}\label{eq:Lv-negative}
 \cL v\leq-c
 \quad\text{in }U\cap\{0<d<\delta\},
\end{equation}
and
\begin{equation*}
 v=0\quad\text{on }U\cap\partial\Om,
 \qquad
 v\geq0\quad\text{when }d=\delta,
\end{equation*}
The same constants may be used for any family with fixed data bounds and fixed constants in Lemma \ref{lem:MRC-localization}.
\end{lemma}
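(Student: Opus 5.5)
The plan is to compute $\cL v$ explicitly along the solution and then split the jets into four regimes. In three of them one of the coercive terms controls everything, and in the remaining one (MRC), through Lemma~\ref{lem:MRC-localization}, supplies the sign. Throughout, the jet is $(x,t,Du(x),D^2u(x))$, which is a level jet with $Du\in K$, so Lemma~\ref{lem:coefficient-bounds} applies.

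Since $\cL w=-\cJ$ and $a(Dw,Dw)=Q$, the chain rule together with $\cL d=\ell$ and $\cL(d^2)=2d\ell+2a(Dd,Dd)$ gives
\[
 \cL v=-\Phi_\mu'(w)\bigl(\cJ+\mu Q\bigr)+\tau\ell-2Na(Dd,Dd)-2Nd\,\ell .
\]
First fix $\tau$ from (MRC), and take $U,R,\eta,\zeta,\eps$ from Lemma~\ref{lem:MRC-localization}; shrink $U$ so that it lies in the collar. Let $C_1$ be the constant of \eqref{eq:J-ell-Q-bounds}, $\lambda_R$ the constant of Lemma~\ref{lem:finite-coercive}, and $C$ the constant in \eqref{eq:w-less-d}. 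The constants are then chosen in the order $\mu$, then $N$, then $\delta$.

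\emph{Choice of $\mu$.} Put $\mu=2(C_1+\tau C_1+\eps+1)/\zeta$. Later $\delta$ will be taken small enough that $\mu C\delta\le\log 2$. By \eqref{eq:w-less-d} this gives $\Phi_\mu'(w)=e^{-\mu w}\ge e^{-\mu C d}\ge\tfrac12$ in $\{0<d<\delta\}$.

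\emph{Choice of $N$.} Put $N=2(C_1+\tau C_1+2)/\min(\lambda_R,\eta)$.

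\emph{Choice of $\delta$.} Choose $\delta$ so that $2N\delta C_1\le\min(\eps,1)$, $\delta\le\tau/N$, and $1-e^{-\mu C\delta}\le\min\bigl(\tfrac12,\eps/C_1\bigr)$.

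With these choices, each point of $U\cap\{0<d<\delta\}$ falls into one of four cases.
\begin{enumerate}[label=(\roman*)]
\item If $|D^2u|\le R$, then $a(Dd,Dd)\ge\lambda_R$. Drop $-\Phi_\mu'\mu Q\le0$ and use $\Phi_\mu'\le1$. This gives $\cL v\le C_1+\tau C_1+1-2N\lambda_R\le-1$.
\item If $a(Dd,Dd)\ge\eta$, the same computation with $\eta$ in place of $\lambda_R$ again gives $\cL v\le-1$.
\item If $Q\ge\zeta$, drop $-2Na(Dd,Dd)\le0$ and use $\Phi_\mu'\ge\tfrac12$. This gives $\cL v\le-\tfrac12(\mu\zeta-C_1)+\tau C_1+\eps\le-1$ by the choice of $\mu$.
\item Otherwise the jet lies in the region \eqref{eq:MRC-local-region}, so $\cJ-\tau\ell\ge4\eps$. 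Dropping both nonpositive quadratic terms, write
\[
 -\Phi_\mu'\cJ+\tau\ell=-(\cJ-\tau\ell)+(1-\Phi_\mu')\cJ\le-4\eps+\eps .
\]
Together with $2Nd|\ell|\le\eps$ this yields $\cL v\le-2\eps$.
\end{enumerate}
Hence \eqref{eq:Lv-negative} holds with $c=\min(1,2\eps)$.

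For the boundary behaviour, note that $v=0$ on $U\cap\partial\Om$ because $w=d=0$ there. On $\{d=\delta\}$ we have $\Phi_\mu(w)\ge0$ since $w\ge0$, and $\tau\delta-N\delta^2\ge0$ by $\delta\le\tau/N$, so $v\ge0$. All constants depend only on $\tau,R,\eta,\zeta,\eps,\lambda_R,C_1,C$, which gives the stated uniformity.

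The main obstacle is the ordering of the constants. The coefficient $\mu$ must be fixed before $N$, and $\delta$ must be taken last, so that both the drift error $2Nd\ell$ and the defect $1-\Phi_\mu'$ are small in the MRC regime without destroying the coercive regimes. The bound $w\le Cd$ from \eqref{eq:w-less-d} is exactly what makes $\Phi_\mu'$ close to $1$ once $\delta$ is small.
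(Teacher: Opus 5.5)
Your proposal is correct and follows the paper's proof essentially step by step: the same expansion of $\cL v$, the same split into the MRC region, the $Q\geq\zeta$ region, and the region where $a(Dd,Dd)$ is bounded below via Lemma \ref{lem:finite-coercive}, with constants fixed in the order $\mu$, $N$, $\delta$ and with $w\leq Cd$ used to make $e^{-\mu w}$ close to $1$. The differences are cosmetic: you give explicit constants, you absorb the defect as $(1-\Phi_\mu')\cJ$ rather than the paper's $\tau(1-E)\ell$, and you control $2Nd\,\ell$ after fixing $N$ instead of shrinking the collar independently of $N$.
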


\begin{proof}
	Let $\tau$ be supplied by (MRC), and take
	$U,R,\eta,\zeta,\eps$ from Lemma
	\ref{lem:MRC-localization}.  Put
	\[
	w=u-\underline u,\qquad E=e^{-\mu w}.
	\]
	Since $\cL w=-\cJ$, $a(Dw,Dw)=Q$, and
	$\Phi_\mu''=-\mu\Phi_\mu'$, differentiating
	\eqref{eq:v-barrier} gives
	\begin{align}
		\cL v
		&=-E(\cJ+\mu Q)+\tau\ell
		-2Nd\,\ell-2Na(Dd,Dd)\notag\\
		&=-E\bigl(\cJ-\tau\ell+\mu Q\bigr)
		+\tau(1-E)\ell
		-2Nd\,\ell-2Na(Dd,Dd).
		\label{eq:Lv-exact}
	\end{align}
	
	We first choose $\mu$.  By Lemma
	\ref{lem:coefficient-bounds}, there is $C_2>0$ such that $\cJ-\tau\ell\geq-C_2$
	on all relevant level jets.  Since $\zeta>0$, we may fix $\mu$ so
	large that
	\begin{equation}\label{eq:mu-choice}
		-C_2+\mu\zeta\geq4\eps.
	\end{equation}
	We divide the level jets in the local collar into three regions.
	
	First, suppose that
	\[
	|X|\geq R,\qquad
	a(Dd,Dd)\leq\eta,\qquad Q\leq\zeta.
	\]
	Then Lemma \ref{lem:MRC-localization} gives
	\[
	\cJ-\tau\ell+\mu Q
	\geq\cJ-\tau\ell\geq4\eps.
	\]
	Second, if $Q\geq\zeta$, then \eqref{eq:mu-choice} gives
	\[
	\cJ-\tau\ell+\mu Q
	\geq-C_2+\mu\zeta\geq4\eps.
	\]
	Thus the first term on the second line of
	\eqref{eq:Lv-exact} is strictly negative in both regions.
	
	It remains to consider the region in which $Q<\zeta$ and either
	$|X|<R$ or $a(Dd,Dd)>\eta$.  In the first case, Lemma
	\ref{lem:finite-coercive} gives
	$a(Dd,Dd)\geq\lambda_R$; in the second case it is already larger
	than $\eta$.  Hence throughout this region,
	\begin{equation}\label{eq:lambda-star}
		a(Dd,Dd)\geq
		\lambda_*:=\min\{\lambda_R,\eta\}>0.
	\end{equation}
	Since $\ell$ is uniformly bounded, shrink the collar, independently
	of $N$, so that $2d|\ell|\leq\lambda_*$.
	The two terms involving $N$ then satisfy
	\[
	-2Nd\,\ell-2Na(Dd,Dd)
	\leq2Nd|\ell|-2N\lambda_*
	\leq-N\lambda_*.
	\]
	Once $\mu$ is fixed, all the remaining terms in
	\eqref{eq:Lv-exact} are uniformly bounded.  We may therefore choose
	$N$ sufficiently large so that
	\[
	\cL v\leq-1
	\]
	throughout the region \eqref{eq:lambda-star}.
	
	We now return to the first two regions.  By
	\eqref{eq:w-less-d}, after shrinking the collar to
	$\{0<d<\delta\}$, we have
	\[
	E\geq\frac12,
	\qquad
	0\leq1-E\leq\mu w\leq C\mu d.
	\]
	Since the principal bracket is at least $4\eps$,
	\[
	-E\bigl(\cJ-\tau\ell+\mu Q\bigr)\leq-2\eps.
	\]
	After $N$ has been fixed, decrease $\delta$ further so that
	\[
	\left|\tau(1-E)\ell-2Nd\,\ell\right|\leq\eps.
	\]
	The remaining term $-2Na(Dd,Dd)$ is nonpositive, and hence
	\[
	\cL v\leq-\eps
	\]
	in the first two regions.  Taking $c=\min\{1,\eps\}$ proves
	\eqref{eq:Lv-negative} throughout the local collar.
	
	Finally, require $\delta\leq\tau/N$.  On the true boundary,
	$w=d=0$, so $v=0$.  On the artificial boundary $d=\delta$,
	\[
	v=\Phi_\mu(w)+\tau\delta-N\delta^2
	\geq\delta(\tau-N\delta)\geq0.
	\]
	This completes the proof.
\end{proof}

We complete the proof of Theorem \ref{thm:mixed-intro}.  The use of a Euclidean Killing field is convenient because it avoids third derivatives of the unknown solution.

\begin{proof}[Proof of Theorem \ref{thm:mixed-intro}]
Fix $x_0\in\partial\Om$, translate it to the origin, and choose coordinates so that $e_n=\nu(x_0)$.  Locally write
\[
 \partial\Om=\{x_n=\rho(x')\},
 \qquad \rho(0)=|D\rho(0)|=0.
\]
For $1\leq\alpha<n$, define the affine vector field
\begin{equation}\label{eq:Killing-field}
 T_\alpha
 =\partial_\alpha+
 \sum_{\beta<n}\rho_{\alpha\beta}(0)
 \bigl(x_\beta\partial_n-x_n\partial_\beta\bigr).
\end{equation}
It is the sum of a translation and infinitesimal Euclidean rotations.  Since $G_t$ is invariant under rigid motions of the base variables, differentiating this invariance gives
\begin{equation*}
 \cL(T_\alpha u)=0.
\end{equation*}
Consequently,
\begin{equation}\label{eq:LTw-bound}
 \bigl|\cL\bigl(T_\alpha(u-\underline u)\bigr)\bigr|
 =|\cL(T_\alpha\underline u)|\leq C.
\end{equation}
Indeed, $T_\alpha$ has affine coefficients, $\underline u$ is fixed in $C^3$, and \eqref{eq:trace-a-bounds}--\eqref{eq:b-bound} control the contraction.

The vector field \eqref{eq:Killing-field} is tangent to the quadratic osculating graph of $\partial\Om$ at the origin.  Since $u-\underline u=0$ on the true boundary and both gradients are bounded,
\begin{equation}\label{eq:Tw-boundary}
 \bigl|T_\alpha(u-\underline u)\bigr|
 \leq C|x|^2
 \qquad\text{on }\partial\Om
\end{equation}
after reducing the coordinate neighborhood.

Let $v$ be the barrier from Lemma \ref{lem:strict-local-barrier}.  In a small local collar set, consider
\begin{equation*}
 W_\pm
 =A v+B|x|^2
 \pm T_\alpha(u-\underline u).
\end{equation*}
The linearization of $|x|^2$ is uniformly bounded.  First choose $B$ so that \eqref{eq:Tw-boundary}, the artificial face $d=\delta$, and the lateral boundary of the coordinate neighborhood are controlled.  Then choose $A\gg B+1$.  Equations \eqref{eq:Lv-negative} and \eqref{eq:LTw-bound} give
\begin{equation*}
 \cL W_\pm\leq0,
 \qquad
 W_\pm\geq0\quad\text{on the boundary of the local collar}.
\end{equation*}
The minimum principle yields $W_\pm\geq0$ inside.  Both functions vanish at $x_0$.  Taking their inward normal derivatives gives
\begin{equation*}
 \left|\partial_\nu T_\alpha(u-\underline u)(x_0)\right|
 \leq A\,\partial_\nu v(x_0)
 =A\bigl((u-\underline u)_\nu(x_0)+\tau\bigr)\leq C.
\end{equation*}
At $x_0$, differentiating the coefficients of $T_\alpha$ shows
\[
 \partial_\nu T_\alpha(u-\underline u)
 =D^2(u-\underline u)(e_\alpha,\nu)+O(|D(u-\underline u)|).
\]
All lower-order terms are bounded.  Hence $|D^2u(e_\alpha,\nu)(x_0)|\leq C$.  The strict MRC margin is stable in a neighborhood of each boundary point; compactness of $\partial\Om$ provides a finite cover and a uniform constant.
\end{proof}

We next ask how much of (MRC) is forced by the barrier argument.

 Consider a fixed smooth local barrier near $x_0$ of the form
 \begin{equation}\label{eq:general-core-barrier}
 	\mathcal V(w,x)=\Psi\bigl(w,d(x)\bigr)+\chi(x),
 \end{equation}
 where $w=u-\underline u$, $\Psi$ is smooth near $(0,0)$, and
 \begin{equation*}
 	\Psi(0,0)=0,\qquad
 	c_w:=\Psi_w(0,0)>0,\qquad
 	c_d:=\Psi_d(0,0)\geq0.
 \end{equation*}
 We assume that $\chi$ is smooth in a full neighborhood of $x_0$ and
 has an ambient local minimum there.  After subtracting a constant,
 we may arrange that
 \begin{equation*}
 	\chi(x_0)=0,\qquad D\chi(x_0)=0,\qquad D^2\chi(x_0)\geq0.
 \end{equation*}
 This class contains the standard tangential correction
 $\chi(x)=B|x-x_0|^2$, with $B\geq0$.
 
 At a solution jet, the chain rule and the identities
 \[
 \cL w=-\cJ,\qquad
 \cL d=\ell,\qquad
 Dw=p-D\underline u
 \]
 give
 \begin{align*}
 	\cL\mathcal V(w,x)
 	={}&-\Psi_w(w,d)\cJ+\Psi_d(w,d)\ell
 	+\Psi_{ww}(w,d)Q\\
 	&+2\Psi_{wd}(w,d)a(p-D\underline u,Dd)
 	+\Psi_{dd}(w,d)a(Dd,Dd)\\
 	&+a:D^2\chi+b\cdot D\chi.
 \end{align*}
 To study this expression independently of a particular solution, let
 $s$ represent a possible value of $w(x)$.  For a level jet
 $(t,p,X)$ at $x$, define its algebraic action on the barrier by
 \begin{align}
 	\cL_{(t,p,X)}\mathcal V[s]
 	:={}&-\Psi_w(s,d)\cJ+\Psi_d(s,d)\ell
 	+\Psi_{ww}(s,d)Q\notag\\
 	&+2\Psi_{wd}(s,d)a(p-D\underline u,Dd)
 	+\Psi_{dd}(s,d)a(Dd,Dd)\notag\\
 	&+a:D^2\chi+b\cdot D\chi.
 	\label{eq:algebraic-barrier-linearization}
 \end{align}
 Here $d=d(x)$, and every derivative of $\Psi$ on the right hand side
 is evaluated at $(s,d(x))$.
 
 Fix $C_{\rm col}>0$ such that
 \begin{equation*}
 	0\leq w(x)\leq C_{\rm col}d(x)
 \end{equation*}
 in the boundary collar for the family under consideration.  We say
 that $\mathcal V$ has \emph{uniform recession negativity} at $x_0$ if
 there exist a neighborhood $U$ of $x_0$ and constants
 \[
 R<\infty,\qquad \eta,\zeta,c>0,
 \]
 such that
 \begin{equation*}
 	\cL_{(t,p,X)}\mathcal V[s]\leq-c
 \end{equation*}
 whenever $0\leq s\leq C_{\rm col}d(x)$ and
 \begin{equation*}
 		x\in U,\qquad |X|\geq R,\qquad
 		a(Dd,Dd)\leq\eta,\qquad Q\leq\zeta.
 \end{equation*}
 Thus the barrier remains uniformly strict on all sufficiently large
 level jets for which both available quadratic coercive terms are
 small.

The preceding barrier class characterizes (MRC) at the recession level.

\begin{theorem}\label{thm:MRC-sharp}
There exists a core barrier of the form \eqref{eq:general-core-barrier} with uniform recession negativity at $x_0$ if and only if (MRC) holds at $x_0$.  Any such barrier forces
\begin{equation}\label{eq:necessary-slope}
 \inf_{(j,g)\in\cR^*_{x_0}}
 \left(j+\frac{c_d}{c_w}g\right)>0
\end{equation}
when $c_d>0$; if $c_d=0$, the same conclusion holds with a sufficiently small positive slope.  Conversely, the barrier \eqref{eq:v-barrier} realizes any slope supplied by (MRC).
\end{theorem}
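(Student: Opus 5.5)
The argument has two directions. Sufficiency follows from results already proved. Necessity is obtained by evaluating the algebraic action \eqref{eq:algebraic-barrier-linearization} along a doubly degenerate recession sequence and passing to the limit.

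\emph{Sufficiency.} Assume (MRC) at $x_0$ with slope $\tau$. The barrier \eqref{eq:v-barrier} is of the form \eqref{eq:general-core-barrier} with
\[
 \Psi(w,d)=\Phi_\mu(w)+\tau d-Nd^2,\qquad \chi=0,
\]
so $c_w=1$ and $c_d=\tau$. To get uniform recession negativity, repeat the computation \eqref{eq:Lv-exact} with $w$ replaced by the free variable $s\in[0,C_{\rm col}d]$. In the recession region, Lemma \ref{lem:MRC-localization} gives $\cJ-\tau\ell\geq4\eps$. The term $\mu Q$ is nonnegative, and $-2Na(Dd,Dd)\leq0$. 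The error terms $\tau(1-e^{-\mu s})\ell$ and $2Nd\,\ell$ are $O(d)$, so they are absorbed after shrinking $U$. This yields $\cL_{(t,p,X)}\mathcal V[s]\leq-2\eps$, and it realizes the slope $c_d/c_w=\tau$.

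\emph{Necessity.} Let $(j,g)\in\cR^*_{x_0}$ be the limit along a doubly degenerate recession sequence $(x_m,t_m,p_m,X_m)$. Take $s_m=0$, which is admissible. Since $x_m\to x_0$, we have $d(x_m)\to0$, so every derivative of $\Psi$ at $(0,d(x_m))$ converges to its value at $(0,0)$.

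The terms vanish or converge as follows.
\begin{itemize}
\item The quadratic terms satisfy $\Psi_{ww}Q\to0$ and $\Psi_{dd}\,a(Dd,Dd)\to0$.
\item For the cross term, Cauchy--Schwarz for the positive form $a$ gives $|a(p-D\underline u,Dd)|\leq Q^{1/2}a(Dd,Dd)^{1/2}\to0$.
\item For $\chi$, $b\cdot D\chi(x_m)\to0$ because $b$ is bounded by Lemma \ref{lem:coefficient-bounds} and $D\chi(x_0)=0$.
\item The term $a:D^2\chi(x_m)$ is asymptotically $\geq-o(1)$, since $a\geq0$ and $D^2\chi(x_m)\geq -o(1)I$ by continuity and $D^2\chi(x_0)\geq0$. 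The trace bound $\operatorname{tr}a\leq n$ controls this step.
\end{itemize}
Uniform recession negativity then gives
\[
 -c\geq\limsup_m\cL\mathcal V\geq -c_wj-c_dg,
\]
that is,
\[
 j+\frac{c_d}{c_w}g\geq\frac{c}{c_w}.
\]
The bound is uniform over $\cR^*_{x_0}$, which is \eqref{eq:necessary-slope}. If $\cR^*_{x_0}$ is empty, the infimum is $+\infty$ and there is nothing to prove.

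\emph{The case $c_d=0$.} The argument gives $\inf j\geq c/c_w>0$. Lemma \ref{lem:coefficient-bounds} bounds $|g|\leq C_1$, so any $\tau<c/(c_wC_1)$ gives $\inf(j+\tau g)>0$. Hence (MRC) holds with that small positive slope.

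The main obstacle is the necessity direction. There one has to check that no term of \eqref{eq:algebraic-barrier-linearization} other than $-\Psi_w\cJ+\Psi_d\ell$ can contribute a negative sign in the limit. The sign conditions on $\chi$ and the vanishing of the mixed term via Cauchy--Schwarz are exactly what rule this out.
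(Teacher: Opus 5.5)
Your proposal is correct and follows the paper's proof. For necessity, you evaluate the algebraic action along a doubly degenerate recession sequence (you take $s_m=0$, the paper takes any admissible $s_m$), kill the quadratic and cross terms by Cauchy--Schwarz, and handle the $\chi$ terms via $D\chi(x_0)=0$ and $D^2\chi(x_0)\geq0$; for sufficiency, you use the barrier \eqref{eq:v-barrier} with $\chi\equiv0$. Your redone computation \eqref{eq:Lv-exact} with the free variable $s\in[0,C_{\rm col}d]$ is slightly more careful than the paper's one-line appeal to Lemma \ref{lem:strict-local-barrier}, which is stated only along an actual solution.
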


\begin{proof}
 At the solution jet, the chain rule gives
\begin{align}
 \cL\Psi(w,d)
 ={}&-\Psi_w\cJ+\Psi_d\ell
 +\Psi_{ww}a(Dw,Dw)\notag\\
 &+2\Psi_{wd}a(Dw,Dd)
 +\Psi_{dd}a(Dd,Dd).
 \label{eq:L-general-Psi}
\end{align}
 Along a doubly degenerate recession sequence, choose any
 $s_m\in[0,C_{\rm col}d(x_m)]$.  Then $s_m\to0$, and the quadratic terms in \eqref{eq:algebraic-barrier-linearization} satisfy
\[
 a(p-D\underline u,p-D\underline u)=Q\to0,
 \qquad a(Dd,Dd)\to0.
\]
Since $a\geq0$, Cauchy--Schwarz gives
\begin{equation*}
 |a(p-D\underline u,Dd)|^2\leq Q\,a(Dd,Dd)\to0.
\end{equation*}
 Since $x_0$ is an ambient local minimum of $\chi$, we have $D\chi(x_0)=0$ and $D^2\chi(x_0)\geq0$.  The coefficient bounds, together with $a(Dd,Dd)\to0$, show that the normal and mixed contractions vanish; the surviving tangential contraction is nonnegative.  Thus $\chi$ cannot improve a required negative upper bound.  Passing to a recession limit pair $(j,g)$ in \eqref{eq:L-general-Psi}, or equivalently in \eqref{eq:algebraic-barrier-linearization}, gives
\begin{equation*}
 \liminf_{m\to\infty}
  \cL_{(t_m,p_m,X_m)}\mathcal V[s_m]
 \geq-c_wj-c_dg.
\end{equation*}
On the other hand, uniform recession negativity gives
\[
 -c\geq\limsup_{m\to\infty}
 \cL_{(t_m,p_m,X_m)}\mathcal V[s_m].
\]
Combining the last two inequalities yields
\[
 c_wj+c_dg\geq c,
 \qquad
 j+\frac{c_d}{c_w}g\geq\frac c{c_w}>0,
\]
uniformly over all recession limit pairs.  This is
\eqref{eq:necessary-slope}.  If $c_d=0$, the same argument gives a uniform
positive lower bound for $j$; boundedness of $g$ then permits a sufficiently
small positive slope.  This proves necessity.  For sufficiency, take
$\chi\equiv0$ and use the barrier from Lemma \ref{lem:strict-local-barrier}.
\end{proof}

\begin{remark}\label{rem:not-PDE-necessity}
Theorem \ref{thm:MRC-sharp} does not assert that failure of (MRC) produces an actual sequence of solutions with unbounded mixed derivatives.  The bad level jets may not be dynamically reachable, and a higher-order or equation-specific construction may lie outside the fixed zero-order class \eqref{eq:general-core-barrier}.  The theorem concerns only the standard maximum-principle barrier argument.
\end{remark}

The abstract recession condition has several more concrete sufficient criteria.  They are useful when verifying a theorem in examples.

\begin{proposition}\label{prop:MRC-criteria}
Each of the following conditions implies (MRC) at $x_0$.
\begin{enumerate}[label=\textup{(\roman*)}]
\item There is $\eps_0>0$ such that $\cJ\geq\eps_0$ on all level jets near $x_0$.
\item There is $\eps_0>0$ such that $j\geq\eps_0$ for all $(j,g)\in\cR^*_{x_0}$.
\item There is $\gamma_0>0$ such that $g\geq\gamma_0$ for all $(j,g)\in\cR^*_{x_0}$.
\end{enumerate}
Condition \textup{(i)} is the full jet-subsolution condition; \textup{(ii)} is its asymptotic version; and \textup{(iii)} is the pure drift-curvature condition.
\end{proposition}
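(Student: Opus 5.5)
The three criteria are successively weaker, so I would prove (i)$\Rightarrow$(ii) and then check (ii) and (iii) directly against Definition~\ref{def:MRC-intro}. The only tools needed are the uniform bounds of Lemma~\ref{lem:coefficient-bounds} and the convention that an infimum over the empty set is $+\infty$. If $\cR^*_{x_0}=\varnothing$, (MRC) holds trivially for any $\tau_{x_0}>0$, so assume throughout that recession limit pairs exist.

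\emph{Step 1: (i)$\Rightarrow$(ii).} Let $(j,g)\in\cR^*_{x_0}$ arise from a doubly degenerate recession sequence $(x_m,t_m,p_m,X_m)$. Since $x_m\to x_0$, these are level jets at base points eventually lying in the neighborhood where (i) holds. Hence $\cJ^{(m)}\geq\eps_0$ for all large $m$, and passing to the limit gives $j\geq\eps_0$.

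\emph{Step 2: (ii)$\Rightarrow$(MRC).} By \eqref{eq:J-ell-Q-bounds}, $|\ell|\leq C_1$ on level jets in the collar, so every pair in $\cR^*_{x_0}$ satisfies $|g|\leq C_1$. Choose $\tau_{x_0}=\eps_0/(2C_1)$, or any positive number if $C_1=0$. Then
\[
j+\tau_{x_0}g\geq\eps_0-\tau_{x_0}C_1\geq\eps_0/2
\]
uniformly over $\cR^*_{x_0}$, so the infimum in \eqref{eq:MRC-intro} is at least $\eps_0/2>0$.

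\emph{Step 3: (iii)$\Rightarrow$(MRC).} By the same lemma, $|\cJ|\leq C_1$, hence $j\geq-C_1$ for every recession limit pair. Choose $\tau_{x_0}=2C_1/\gamma_0$, or any positive number if $C_1=0$. Then
\[
j+\tau_{x_0}g\geq -C_1+2C_1\geq C_1,
\]
and in the degenerate case $j+\tau g\geq\tau\gamma_0>0$. A cleaner uniform choice is $\tau_{x_0}=(C_1+1)/\gamma_0$, which gives the margin $j+\tau_{x_0}g\geq 1$ in all cases.

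There is no real obstacle. The one point needing care is that the bound $C_1$ is uniform over all level jets in the fixed collar, not just along a particular sequence; this is exactly what Lemma~\ref{lem:coefficient-bounds} provides. That uniformity is what lets a single $\tau_{x_0}$ and a common positive margin work for the whole set $\cR^*_{x_0}$.
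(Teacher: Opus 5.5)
Your argument is correct and matches the paper's proof. Boundedness of $g=-\ell$ lets a small $\tau$ work under (i) or (ii), and your Step~1 makes explicit the passage from (i) to (ii) that the paper leaves implicit; the uniform lower bound on $j=\cJ$ lets a large $\tau$ work under (iii). One small correction to your opening remark: the criteria are not ``successively weaker,'' since (iii) is independent of (i) and (ii), though your proof rightly handles it separately.
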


\begin{proof}
The quantity $g=-\ell$ is uniformly bounded.  Thus (i) or (ii) implies (MRC) after choosing $\tau>0$ sufficiently small.  The quantity $j=\cJ$ has a uniform lower bound, so (iii) implies (MRC) after choosing $\tau$ sufficiently large.
\end{proof}

The first criterion follows from a familiar phase separation if the drift loss is quantitatively dominated.  This is stronger than (MRC), because it is imposed on every level jet rather than only on doubly degenerate recession jets, but it is often easier to verify directly.

\begin{corollary}\label{cor:checkable-JS}
Let
\[
 C_b=\sup_{[0,1]\times K\times\{G_t=\theta\}}|G_{t,p}|,
 \qquad
 D_0=\sup_{x\in\overline\Om,\,p\in K}|D\underline u(x)-p|.
\]
Suppose
\begin{equation*}
 \inf_{x,t,p\in K}G_t(p,D^2\underline u(x))
 \geq\theta+\eta_0,
 \qquad \eta_0>C_bD_0.
\end{equation*}
Then $\cJ\geq\eta_0-C_bD_0>0$ on every level jet, and hence (MRC) holds.
\end{corollary}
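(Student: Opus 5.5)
We prove Corollary \ref{cor:checkable-JS} by splitting the full linearized separation $\cJ$ into a Hessian part and a gradient part, estimating the first by concavity in $X$ at fixed gradient, and bounding the second crudely by $C_bD_0$. Fix a level jet $(t,p,X)$ at a point $x$, so $X\geq0$, $p\in K$ and $G_t(p,X)=\theta$. Write
\[
 \cJ=a:\bigl(D^2\underline u(x)-X\bigr)+b\cdot\bigl(D\underline u(x)-p\bigr).
\]

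For the Hessian part, we use that for fixed $p$ the map $Y\mapsto G_t(p,Y)$ is concave on the closed convex cone $Y\geq0$. This holds because $F$ is concave on the positive cone and $Y\mapsto A_t(p,Y)=w_t^{-1}B_tYB_t$ is linear and maps $Y\geq0$ into the positive cone. Both $X$ and $D^2\underline u(x)$ lie in this cone, the latter since $\underline u$ is convex. Concavity then gives the supporting hyperplane inequality
\[
 G_t(p,D^2\underline u(x))\leq G_t(p,X)+a:\bigl(D^2\underline u(x)-X\bigr),
\]
where $a=G_{t,X}(p,X)$ is positive definite at a finite jet by \eqref{eq:a-formula}. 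If $X$ lies on the boundary of the cone, we argue on the segment $X+\lambda(D^2\underline u-X)$, $\lambda\in[0,1]$, which stays in the cone. Along it $G_t$ is concave and differentiable at $\lambda=0$, because $G_t$ is smooth in $X$ near any $X\geq0$. Combining with the hypothesis,
\[
 a:\bigl(D^2\underline u(x)-X\bigr)\geq G_t(p,D^2\underline u(x))-\theta\geq\eta_0.
\]

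For the gradient part, the definitions of $C_b$ and $D_0$ give $|b\cdot(D\underline u(x)-p)|\leq |b|\,|D\underline u(x)-p|\leq C_bD_0$. The constant $C_b$ is finite by \eqref{eq:b-bound} of Lemma \ref{lem:coefficient-bounds}. Adding the two bounds yields $\cJ\geq\eta_0-C_bD_0>0$ uniformly on every level jet.

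This is exactly criterion (i) of Proposition \ref{prop:MRC-criteria}, with $\eps_0=\eta_0-C_bD_0$, at every boundary point, so (MRC) holds on $\partial\Om$ with a uniform margin. The only point requiring care is the concavity and supporting-hyperplane step on the closed cone. It is valid because both matrices are positive semidefinite, and the gradient argument is frozen at $p$, which avoids the failure of joint concavity shown in Proposition \ref{prop:not-joint-concave}.
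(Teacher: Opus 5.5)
Your proposal is correct and follows the paper's proof essentially verbatim. Concavity of $X\mapsto G_t(p,X)$ at the frozen gradient $p$ gives $a:(D^2\underline u-X)\geq\eta_0$, the drift term is bounded below by $-C_bD_0$, and (MRC) then follows from criterion (i) of Proposition~\ref{prop:MRC-criteria}, which the paper leaves implicit; your remark about level jets on the boundary of the positive cone only spells out the supporting-hyperplane step more carefully.
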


\begin{proof}
For fixed $p$, concavity in $X$ gives
\[
 G_{t,X}(p,X):(D^2\underline u-X)
 \geq G_t(p,D^2\underline u)-G_t(p,X)
 \geq\eta_0.
\]
The drift term satisfies
\[
 b\cdot(D\underline u-p)
 \geq-|b|\,|D\underline u-p|
 \geq-C_bD_0.
\]
Adding this to the Hessian estimate gives
$\cJ\geq\eta_0-C_bD_0>0$, as claimed.
\end{proof}

There is also a simple purely geometric condition which avoids a lower subsolution in the mixed estimate.

\begin{corollary}\label{cor:Serrin-coarse}
Assume $|Du|\leq P$, and let $c_0$ be the lower trace bound in
\eqref{eq:c0-explicit}.  Let $C_b$ be the supremum of $|b|$ over the
corresponding level jets.  If
\begin{equation}\label{eq:Serrin-coarse}
 \II_{\partial\Om}\geq\kappa_0 I,
 \qquad \kappa_0>\frac{2C_b}{c_0},
\end{equation}
then the mixed derivative estimate \eqref{eq:mixed-est-intro} follows from a pure distance barrier and a fixed extension of the boundary data.
\end{corollary}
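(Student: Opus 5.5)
The strategy is to show that, under \eqref{eq:Serrin-coarse}, the pure drift-curvature criterion Proposition~\ref{prop:MRC-criteria}(iii) holds with a uniform margin. The mixed estimate then follows from Theorem~\ref{thm:mixed-intro}. We may also argue directly with the barrier $v=\Phi_\mu(w)+\tau d-Nd^2$, where $\underline u$ is replaced by any fixed $C^3$ extension of $\varphi$. This extension need not be a subsolution, and it need not lie below $u$.

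The key step is a lower bound for $-\ell$ on every level jet near the boundary. By \eqref{eq:ell-boundary-intro} and continuity in the collar,
\[
 -\ell=a_T:\II_{\partial\Om}-b\cdot\nu+O(d)\geq\kappa_0\tr a_T-C_b+O(d).
\]
On a doubly degenerate recession sequence we have $a(Dd,Dd)\to0$. Since $a\geq0$, the normal and mixed entries of $a$ tend to zero, so $\tr a_T-\tr a\to0$. Together with \eqref{eq:trace-a-bounds} this gives
\[
 g\geq\kappa_0c_0-C_b>C_b>0 .
\]
(Under the cruder requirement $\tr a_T\geq c_0/2$ one gets $g\geq\kappa_0c_0/2-C_b>0$, which is exactly where the factor $2$ in \eqref{eq:Serrin-coarse} is used.) Thus every $(j,g)\in\cR^*_{x_0}$ satisfies $g\geq\gamma_0$, with $\gamma_0$ independent of $x_0$. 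Since $j$ is bounded below by Lemma~\ref{lem:coefficient-bounds}, a large $\tau$ gives (MRC) with a uniform margin, and Theorem~\ref{thm:mixed-intro} applies.

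The main obstacle is the positivity of $w=u-\underline u$. Both the barrier argument and the hypothesis $u\geq\underline u$ in Theorem~\ref{thm:mixed-intro} need it, and without a subsolution it is unavailable. My plan is to avoid it by using the pure distance barrier $v=\tau d-Nd^2$, with no $w$ term at all. On jets where $a(Dd,Dd)\geq\lambda_*$ the term $-2Na(Dd,Dd)$ dominates, exactly as in the third region of Lemma~\ref{lem:strict-local-barrier}. On jets where $a(Dd,Dd)<\eta$, the trace argument above yields $\ell\leq-\gamma_0/2$ uniformly in a neighborhood, and this no longer needs a recession sequence. Hence $\cL v\leq\tau\ell+2Nd|\ell|\leq-\tau\gamma_0/4$ once $\delta$ is small. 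This gives $\cL v\leq-c$ on the collar, and $v\geq0$ on its boundary when $\delta\leq\tau/N$.

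The Killing-field comparison $W_\pm=Av+B|x|^2\pm T_\alpha(u-\underline u)$ then goes through verbatim. It uses only \eqref{eq:LTw-bound} and \eqref{eq:Tw-boundary}, and both hold for any fixed $C^3$ extension $\underline u$ of $\varphi$. The normal derivative at $x_0$ is bounded by $A\tau$, and compactness of $\partial\Om$ gives the uniform constant.
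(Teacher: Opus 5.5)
Your final argument is correct and is essentially the paper's proof: the pure distance barrier $v=\tau d-Nd^2$, a split into jets where $-2Na(Dd,Dd)$ is coercive and jets where $\ell$ is uniformly negative, and then the Killing-field comparison with a fixed $C^3$ extension of $\varphi$. The paper splits on $\tr a_T\geq c_0/2$ versus $\tr a_T<c_0/2$ instead of on a small threshold for $a(Dd,Dd)$. You were right to drop the opening route through Proposition~\ref{prop:MRC-criteria}(iii) and Theorem~\ref{thm:mixed-intro}, since that theorem needs a convex $\underline u\leq u$ which is not available here, and your small-$\eta$ split in fact shows that $\kappa_0c_0>C_b$ already suffices, so the factor $2$ only comes from splitting at $c_0/2$.
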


\begin{proof}
 Choose first
 \[
  \frac{2C_b}{c_0}<\kappa_1<\kappa_0.
 \]
 By continuity of the second fundamental forms of the parallel
 hypersurfaces, the collar can then be shrunk so that every level set of $d$
 has second fundamental form at least $\kappa_1I$.  At every point of this
 collar,
 \[
  \ell\leq-\kappa_1\tr a_T+C_b.
 \]
 If $\tr a_T\geq c_0/2$, then
 \[
  \ell\leq-\frac{\kappa_1c_0}{2}+C_b<0.
 \]
 If $\tr a_T<c_0/2$, then, because
 $\tr a=\tr a_T+a(Dd,Dd)$ in the tangential-normal splitting,
 \eqref{eq:trace-a-bounds} gives
 \[
  a(Dd,Dd)\geq\frac{c_0}{2},
 \]
 and the term $-2Na(Dd,Dd)$ is coercive.  For $v=\tau d-Nd^2$, we have
 \[
 \cL v=(\tau-2Nd)\ell-2Na(Dd,Dd).
 \]
 Choose $N$ large enough to dominate the bounded term $\tau\ell$, and
 then shrink the collar so that the term $2Nd\ell$ is negligible.  The
 two cases above then give $\cL v\leq-c<0$.  Since $v=0$ on
 $\partial\Om$ and $v\geq0$ on the artificial boundary, the
 Killing-field comparison applied with a fixed smooth extension of
 $\varphi$ yields the mixed derivative estimate.
\end{proof}

\begin{remark}\label{rem:MRC-weakness}
The coarse criterion \eqref{eq:Serrin-coarse} is not expected to be sharp because it estimates the drift by its absolute supremum.  Definition \ref{def:MRC-intro} retains the actual coupling between the limiting coefficient matrix, the boundary second fundamental form, the drift, and the subsolution separation.  It may hold even when neither the asymptotic jet-subsolution condition nor the pure geometric condition holds by itself.
\end{remark}

\section{The exact boundary-gap identity}\label{sec:normal-gap}

We turn to the double-normal direction.  The phase difference admits an exact formula, which gives the optimal dependence on the gap without a contradiction sequence.

We first derive the boundary identity by a complex Schur complement.

Fix a boundary point and use the Euclidean orthogonal splitting
\[
 T_x\partial\Om\oplus\R\nu.
\]
Write the Euclidean Hessian in tangential-normal blocks as
\begin{equation*}
 D^2u=
 \begin{pmatrix}
  M&z\\ z^T&r_\nu
 \end{pmatrix},
 \qquad r_\nu=u_{\nu\nu}.
\end{equation*}
Here $M=D_T^2u$, $z=D^2u(\,\cdot\,,\nu)|_{T\partial\Om}$ is the mixed
block, and $r_\nu$ is the double-normal entry.  Let $Du=q+s\nu$, where
$q=D_T\varphi$ and $s=u_\nu$.  Using the shorthand $w_t=w_t(Du)$, we have
\[
 A_t[u]=C_t^{-1/2}D^2u\,C_t^{-1/2},
 \qquad C_t=w_t(I+t^2Du\otimes Du).
\]
The corresponding block decomposition is
\begin{equation*}
 C_t=
 \begin{pmatrix}
  C_{t,T}&c_t\\c_t^T&c_{t,0}
 \end{pmatrix},
\end{equation*}
where
\begin{equation}\label{eq:C-block-explicit}
 C_{t,T}=w_t(I_T+t^2q\otimes q),
 \qquad c_t=w_tt^2sq,
 \qquad c_{t,0}=w_t(1+t^2s^2).
\end{equation}
Recall that
\begin{equation*}
 D_t=C_{t,T}+\sqrt{-1} M,
 \qquad e_t=c_t+\sqrt{-1} z,
 \qquad S_t=e_t^TD_t^{-1}e_t.
\end{equation*}
The transpose in the last expression is the complex bilinear transpose, as required by the determinant formula; it is not a Hermitian product.

Let
\begin{equation*}
 \Phi_t(M)
 =\sum_{\alpha=1}^{n-1}
 \arctan\lambda_\alpha(C_{t,T}^{-1/2}MC_{t,T}^{-1/2}),
 \qquad L_t=\frac\pi2+\Phi_t(M).
\end{equation*}

\begin{lemma}\label{lem:exact-Schur}
If $D^2u\geq0$, then
\begin{equation}\label{eq:exact-Schur}
 L_t-F(A_t[u])
 =\arctan\frac{c_{t,0}-\operatorname{Re}S_t}
 {r_\nu-\operatorname{Im}S_t}.
\end{equation}
The quotient is interpreted as $+\infty$ if the denominator vanishes.  Moreover,
\begin{equation}\label{eq:s-quadrant}
 c_{t,0}-\operatorname{Re}S_t>0,
 \qquad r_\nu-\operatorname{Im}S_t\geq0,
\end{equation}
and the second inequality is strict if $D^2u>0$.
\end{lemma}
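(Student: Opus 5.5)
The plan is to encode the phase as an argument of a complex determinant, and then use the block determinant formula to split off the normal direction.

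\emph{Step 1: phase as an argument.} Since $A_t[u]=C_t^{-1/2}D^2u\,C_t^{-1/2}$ has eigenvalues $\kappa_i\geq0$,
\[
 \det(I+\sqrt{-1}A_t)=\prod_i(1+\sqrt{-1}\kappa_i)
\]
has argument exactly $F(A_t[u])$, taken as the sum of the individual $\arctan\kappa_i$ rather than modulo $2\pi$. Moreover
\[
 \det(C_t+\sqrt{-1}D^2u)=\det C_t\,\det(I+\sqrt{-1}A_t),
\]
and $\det C_t>0$. Hence $F(A_t[u])$ is a continuous branch of $\arg\det(C_t+\sqrt{-1}D^2u)$. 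In the same way, $\Phi_t(M)$ is a continuous branch of $\arg\det D_t$, where $D_t=C_{t,T}+\sqrt{-1}M$ is invertible because $C_{t,T}>0$.

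\emph{Step 2: Schur complement.} The block determinant formula gives
\[
 \det(C_t+\sqrt{-1}D^2u)=\det D_t\,\bigl(c_{t,0}+\sqrt{-1}r_\nu-S_t\bigr),
\]
with the complex bilinear transpose in $S_t=e_t^TD_t^{-1}e_t$. Taking arguments,
\[
 F(A_t)=\Phi_t(M)+\arg\bigl(\alpha_t+\sqrt{-1}(r_\nu-\beta_t)\bigr)\pmod{2\pi}.
\]
If we show $\alpha_t>0$ and $r_\nu-\beta_t\geq0$, then the last argument lies in $[0,\pi/2)$ and equals $\pi/2-\arctan\frac{\alpha_t}{r_\nu-\beta_t}$. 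That yields \eqref{eq:exact-Schur} provided the $2\pi$ ambiguity is absent.

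To remove the ambiguity, deform continuously. Replace $D^2u$ by $\epsilon D^2u$ with $\epsilon\in(0,1]$, which keeps the convex branch, and let $\epsilon\to0$. Both sides stay continuous as long as the quadrant property holds along the path, and at $\epsilon\to0$ both tend to $0$ and $\pi/2-\pi/2$ respectively. Alternatively, I would observe directly that $F(A_t)\in[0,n\pi/2)$ while the right-hand side lies in $[0,(n-1)\pi/2+\pi/2)$, and that the two differ by a multiple of $2\pi$. Since both values lie in an interval of length $n\pi/2$, this direct route needs a further check when $n\geq4$, so I would rely on the deformation argument.

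\emph{Step 3: quadrant property (main obstacle).} Write $H=C_t+\sqrt{-1}D^2u$. For a complex vector $\zeta=(\xi,1)$ with $\xi=-D_t^{-1}e_t$, the Schur complement $\sigma=c_{t,0}+\sqrt{-1}r_\nu-S_t$ equals $\zeta^TH\zeta$, the bilinear form rather than the Hermitian one. I would use the standard fact that $H$ is symmetric with positive definite real part, so $H^{-1}$ exists and $\operatorname{Re}H^{-1}>0$. By the inverse-block formula, $1/\sigma$ is the $(n,n)$ entry of $H^{-1}$, so $\operatorname{Re}(1/\sigma)>0$ and hence $\operatorname{Re}\sigma>0$.

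For the imaginary part, $H^{-1}=(C+\sqrt{-1}X)^{-1}$ with $X\geq0$ has $\operatorname{Im}H^{-1}=-(\cdots)X(\cdots)^{*}$, which is a negative semidefinite Hermitian form. This can be seen via
\[
 H^{-1}=C^{-1/2}(I+\sqrt{-1}A)^{-1}C^{-1/2},
\]
where $\operatorname{Im}(I+\sqrt{-1}A)^{-1}=-A(I+A^2)^{-1}\leq0$. So $\operatorname{Im}(1/\sigma)\leq0$, which gives $\operatorname{Im}\sigma=r_\nu-\beta_t\geq0$, with strict inequality when $A>0$. This establishes \eqref{eq:s-quadrant}, and also confirms that along the deformation in Step 2 the argument never leaves the quadrant, which is what makes that deformation argument valid.
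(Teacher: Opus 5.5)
Your proposal is correct and follows essentially the same route as the paper. Both arguments use the factorization $I+\sqrt{-1}A_t=C_t^{-1/2}(C_t+\sqrt{-1}D^2u)C_t^{-1/2}$, the block determinant with the complex Schur complement, the deformation $\eps D^2u$ to rule out the $2\pi$ ambiguity, and the identification of $\mathfrak s_t^{-1}$ as the $(n,n)$ entry of the inverse; the only cosmetic difference is that you read off the signs of that entry from the resolvent $C_t^{-1/2}(I+\sqrt{-1}A_t)^{-1}C_t^{-1/2}$, whereas the paper writes it as $y^*C_ty-\sqrt{-1}\,y^*D^2u\,y$ with $y=(C_t+\sqrt{-1}D^2u)^{-1}e_n$.
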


\begin{proof}
Since
\begin{equation}\label{eq:I-iA-factor}
 I+\sqrt{-1} A_t
 =C_t^{-1/2}(C_t+\sqrt{-1}D^2u)C_t^{-1/2},
\end{equation}
the two determinants have the same phase.  The block determinant formula gives
\begin{equation}\label{eq:block-det}
 \det(C_t+\sqrt{-1}D^2u)
 =\det(C_{t,T}+\sqrt{-1}M)\,\mathfrak s_t,
\end{equation}
where
\begin{equation*}
 \begin{aligned}
 \mathfrak s_t
 &=c_{t,0}+\sqrt{-1}r_\nu
 -(c_t+\sqrt{-1}z)^T(C_{t,T}+\sqrt{-1}M)^{-1}
 (c_t+\sqrt{-1}z)\\
 &=(c_{t,0}-\operatorname{Re}S_t)
   +\sqrt{-1}(r_\nu-\operatorname{Im}S_t).
 \end{aligned}
\end{equation*}

We first locate $\mathfrak s_t$.  The lower-right entry in the inverse block formula is $\mathfrak s_t^{-1}$.  Let
\[
 y=(C_t+\sqrt{-1}D^2u)^{-1}e_n.
\]
As $e_n$ is real,
\begin{equation*}
 \mathfrak s_t^{-1}=e_n^*y=((C_t+\sqrt{-1}D^2u)y)^*y
 =y^*C_ty-\sqrt{-1} y^*D^2u\,y.
\end{equation*}
Thus $\operatorname{Re}\mathfrak s_t^{-1}>0$ and
$\operatorname{Im}\mathfrak s_t^{-1}\leq0$.  Therefore $\mathfrak s_t$ lies
in the closed first quadrant, with strictly positive real part.  If
$D^2u>0$, its imaginary part is also positive.  This proves
\eqref{eq:s-quadrant}.

 For a positive semidefinite matrix $N$, write
 $\operatorname{Arg}_0\det(I+\sqrt{-1} N)$ for the continuous lift of the determinant phase along $\eps N$, $0\leq\eps\leq1$, starting from zero.  Then
 \[
  \operatorname{Arg}_0\det(I+\sqrt{-1} N)
  =\sum_j\arctan\lambda_j(N)
 \]
 on this lifted branch.  Applying the same lift to \eqref{eq:I-iA-factor} and \eqref{eq:block-det} gives
\begin{equation*}
 F(A_t[u])=\Phi_t(M)+\arg\mathfrak s_t.
\end{equation*}
 Indeed, after replacing $D^2u$ by $\eps D^2u$, all determinant phases in the block identity start from zero at $\eps=0$; hence their lifted phases add exactly, with no $2\pi$ ambiguity.  Since $\mathfrak s_t$ is in the first quadrant,
\[
 L_t-F(A_t[u])
 =\frac\pi2-\arg\mathfrak s_t
 =\arctan\frac{\operatorname{Re}\mathfrak s_t}
 {\operatorname{Im}\mathfrak s_t},
\]
which is \eqref{eq:exact-Schur}.
\end{proof}

Retain the notation $\alpha_t$ and $\beta_t$ introduced in the statement of
Theorem \ref{thm:exact-gap-intro}.
The positivity in Lemma \ref{lem:exact-Schur} has a quantitative form.

\begin{lemma}\label{lem:alpha-beta}
Assume $D^2u\geq0$, $|Du|\leq P$, and $|z|\leq Z$.  Let
\begin{equation*}
 w_*=(1+P^2)^{1/2},
 \qquad
 \Xi(P,Z)=\frac{w_*P^4}{4}+Z^2.
\end{equation*}
Then
\begin{equation}\label{eq:alpha-beta-bounds}
 1\leq\alpha_t
 \leq w_*(1+P^2)+\Xi(P,Z),
 \qquad
 |\beta_t|\leq\Xi(P,Z).
\end{equation}
\end{lemma}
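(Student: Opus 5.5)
The plan is to diagonalize the tangential block against $C_{t,T}$ and expand $S_t$ into real and imaginary parts explicitly. Since $w_t\geq1$ and $I_T+t^2q\otimes q\geq I_T$, we have $C_{t,T}\geq I_T>0$. We can therefore write
\[
 D_t=C_{t,T}^{1/2}\bigl(I_T+\sqrt{-1}H\bigr)C_{t,T}^{1/2},
 \qquad H=C_{t,T}^{-1/2}MC_{t,T}^{-1/2}\geq0.
\]
This is invertible even when $M$ is only semidefinite. Because $H$ is real symmetric,
\[
 (I_T+\sqrt{-1}H)^{-1}=R-\sqrt{-1}K,
 \qquad R=(I_T+H^2)^{-1},\quad K=H(I_T+H^2)^{-1}.
\]
Here $R$ and $K$ commute, $0<R\leq I_T$ and $0\leq K\leq\frac12 I_T$. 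Put $\tilde c=C_{t,T}^{-1/2}c_t$ and $\tilde z=C_{t,T}^{-1/2}z$. Then $S_t=(\tilde c+\sqrt{-1}\tilde z)^T(R-\sqrt{-1}K)(\tilde c+\sqrt{-1}\tilde z)$ with the bilinear transpose, which gives
\[
 \operatorname{Re}S_t=\tilde c^TR\tilde c-\tilde z^TR\tilde z+2\tilde c^TK\tilde z,
 \qquad
 \operatorname{Im}S_t=2\tilde c^TR\tilde z-\tilde c^TK\tilde c+\tilde z^TK\tilde z.
\]

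\emph{Lower bound $\alpha_t\geq1$.} I would split
\[
 \alpha_t=\bigl(c_{t,0}-c_t^TC_{t,T}^{-1}c_t\bigr)
 +\bigl(\tilde c^T(I_T-R)\tilde c+\tilde z^TR\tilde z-2\tilde c^TK\tilde z\bigr).
\]
Using $I_T-R=H^2(I_T+H^2)^{-1}$ and the commutation of $H,R,K$, the second bracket equals
\[
 (H\tilde c-\tilde z)^T(I_T+H^2)^{-1}(H\tilde c-\tilde z)\geq0.
\]
The first bracket is the real Schur complement $\det C_t/\det C_{t,T}$. From \eqref{eq:C-block-explicit},
\[
 \det C_t=w_t^{n}(1+t^2|Du|^2)=w_t^{n+2},
 \qquad \det C_{t,T}=w_t^{n-1}(1+t^2|q|^2),
\]
so the Schur complement is $w_t^3/(1+t^2|q|^2)\geq w_t\geq1$, since $|q|\leq|Du|$. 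This gives $\alpha_t\geq1$ uniformly, with no positivity required of $M$; this is the point the paper emphasizes.

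\emph{Upper bounds.} I would first show $|\tilde c|^2=c_t^TC_{t,T}^{-1}c_t\leq w_*P^4/4$. Indeed
\[
 c_t^TC_{t,T}^{-1}c_t=\frac{w_tt^4s^2|q|^2}{1+t^2|q|^2}\leq w_*s^2|q|^2\leq\frac{w_*P^4}{4},
\]
by AM--GM, since $s^2+|q|^2=|Du|^2\leq P^2$. Also $|\tilde z|^2\leq|z|^2\leq Z^2$ because $C_{t,T}\geq I_T$. For $\alpha_t$, the exact splitting above gives $\alpha_t\leq c_{t,0}+\tilde c^T(I_T-R)\tilde c+\tilde z^TR\tilde z-2\tilde c^TK\tilde z$ after dropping $-c_t^TC_{t,T}^{-1}c_t\leq0$. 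Here $c_{t,0}\leq w_*(1+P^2)$, and the remaining terms are controlled by $|\tilde c|^2+|\tilde z|^2$ up to the cross term.

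For $\beta_t$ I would use $0\leq K\leq\frac12I_T$ and $0<R\leq I_T$. Then $|\tilde z^TK\tilde z-\tilde c^TK\tilde c|\leq\frac12\max(|\tilde c|^2,|\tilde z|^2)$, while the cross term satisfies $|2\tilde c^TR\tilde z|\leq2|\tilde c||\tilde z|$.

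\emph{Main obstacle.} The delicate step is getting the exact constant $\Xi(P,Z)$ rather than a multiple of it, because cross terms like $2\tilde c^TR\tilde z$ naively cost $|\tilde c|^2+|\tilde z|^2$ on top of the diagonal terms. I would first try to regroup each expression as a difference of two nonnegative quadratic forms in a rotated pair. For example, with $U=(I_T+H^2)^{-1/2}$,
\[
 \operatorname{Im}S_t=(U\tilde z)^T(\ldots)
\]
should reduce to a form like $\mathrm{Im}\bigl[(\tilde c+\sqrt{-1}\tilde z)^T(R-\sqrt{-1}K)(\tilde c+\sqrt{-1}\tilde z)\bigr]$ bounded by $|\tilde c+\sqrt{-1}\tilde z|^2\,\|(I_T+\sqrt{-1}H)^{-1}\|\leq|\tilde c|^2+|\tilde z|^2$, since $\|(I_T+\sqrt{-1}H)^{-1}\|\leq1$. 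This yields $|S_t|\leq|\tilde c|^2+|\tilde z|^2\leq\Xi$, which controls both $|\operatorname{Re}S_t|$ and $|\beta_t|$ by $\Xi$ at once, and $\alpha_t\leq c_{t,0}+|\operatorname{Re}S_t|$ then finishes \eqref{eq:alpha-beta-bounds}.
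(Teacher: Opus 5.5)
Your argument is correct and follows the paper's proof. After normalizing by $C_{t,T}^{1/2}$, your identity $\tilde c^T(I_T-R)\tilde c+\tilde z^TR\tilde z-2\tilde c^TK\tilde z=(H\tilde c-\tilde z)^T(I_T+H^2)^{-1}(H\tilde c-\tilde z)$ is the matrix form of the paper's componentwise bound $\operatorname{Re}S_t\le c_t^TC_{t,T}^{-1}c_t$, and your final estimate $|S_t|\le\|(I_T+\sqrt{-1}H)^{-1}\|\,(|\tilde c|^2+|\tilde z|^2)\le\Xi(P,Z)$ is a normalized version of the paper's $\|D_t^{-1}\|\le w_t^{-1}$ bound. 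In a write-up, drop the intermediate cross-term estimates and the unfinished expression involving $U$ and go straight to that operator-norm bound.
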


\begin{proof}
Put
\[
 \widehat M=C_{t,T}^{-1/2}MC_{t,T}^{-1/2},
 \qquad \widehat c=C_{t,T}^{-1/2}c_t,
 \qquad \widehat z=C_{t,T}^{-1/2}z.
\]
Here $\widehat M$ is a real symmetric nonnegative $(n-1)\times(n-1)$
matrix and $\widehat c,\widehat z\in\R^{n-1}$.  Since
$D_t=C_{t,T}^{1/2}(I+\sqrt{-1}\widehat M)C_{t,T}^{1/2}$ and
$e_t=C_{t,T}^{1/2}(\widehat c+\sqrt{-1}\widehat z)$, the identity
$S_t=e_t^TD_t^{-1}e_t$ becomes
\begin{equation*}
 S_t=(\widehat c+\sqrt{-1}\widehat z)^T
 (I+\sqrt{-1}\widehat M)^{-1}
 (\widehat c+\sqrt{-1}\widehat z).
\end{equation*}
Diagonalize $\widehat M$.  If $k_a\geq0$ are its eigenvalues, then for each component
\begin{equation*}
 \operatorname{Re}
 \frac{(\widehat c_a+\sqrt{-1}\widehat z_a)^2}
 {1+\sqrt{-1} k_a}
 =\widehat c_a^2-
 \frac{(\widehat z_a-k_a\widehat c_a)^2}{1+k_a^2}
 \leq\widehat c_a^2.
\end{equation*}
Therefore
\begin{equation*}
 \operatorname{Re}S_t\leq c_t^TC_{t,T}^{-1}c_t.
\end{equation*}
Using \eqref{eq:C-block-explicit}, a direct computation yields
\begin{align*}
 \alpha_t
 &\geq c_{t,0}-c_t^TC_{t,T}^{-1}c_t\\
 &=w_t\left(1+
  \frac{t^2s^2}{1+t^2|q|^2}\right)
 =\frac{w_t^3}{1+t^2|q|^2}
 \geq1.
\end{align*}

 The complex matrix $D_t=C_{t,T}+\sqrt{-1}M$ has real part bounded below by $w_tI$.  More precisely, for every complex vector $x$,
 \[
  w_t|x|^2\leq x^*C_{t,T}x
  =\operatorname{Re}\bigl(x^*D_tx\bigr)
  \leq |D_tx|\,|x|.
 \]
 Hence
\begin{equation*}
 \|D_t^{-1}\|\leq w_t^{-1}.
\end{equation*}
It follows that
\begin{align}
 |S_t|
 &\leq\frac{|c_t+\sqrt{-1} z|^2}{w_t}
 =w_tt^4s^2|q|^2+\frac{|z|^2}{w_t}\notag\\
 &\leq\frac{w_*P^4}{4}+Z^2=\Xi(P,Z).
 \label{eq:S-upper}
\end{align}
Finally, $c_{t,0}\leq w_*(1+P^2)$.  Combining this with \eqref{eq:S-upper} proves \eqref{eq:alpha-beta-bounds}.
\end{proof}

\begin{proof}[Proof of Theorem \ref{thm:exact-gap-intro}]
Set $\delta_t=L_t-F(A_t[u])$.  Lemma \ref{lem:exact-Schur} gives
\[
 \delta_t=\arctan\frac{\alpha_t}{r_\nu-\beta_t},
 \qquad
 \alpha_t>0,
 \qquad r_\nu-\beta_t\geq0.
\]
Thus $0<\delta_t\leq\pi/2$.  If $\delta_t<\pi/2$, taking the tangent and
solving for the double-normal entry gives the exact pointwise relation
\begin{equation}\label{eq:R-alpha-beta}
 r_\nu=\beta_t+\alpha_t\cot\delta_t.
\end{equation}
If $\delta_t=\pi/2$, the denominator in the quotient vanishes, so
$r_\nu=\beta_t$ and the same formula remains valid because
$\cot(\pi/2)=0$.  Finally, Lemma \ref{lem:alpha-beta} yields
\[
 1\leq\alpha_t\leq C(P,Z),
 \qquad |\beta_t|\leq C(P,Z),
\]
which proves every assertion of the theorem.
\end{proof}

The limiting phase for $t\leq1$ is no smaller than the original one.

\begin{lemma}\label{lem:L-monotone}
If $M\geq0$, then
\begin{equation*}
 L_t(x,s)\geq L_1(x,s)=\mathcal B(x,s)
 \qquad (0\leq t\leq1).
\end{equation*}
\end{lemma}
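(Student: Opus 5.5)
The plan is to reduce the claim to the min--max monotonicity argument already used in Lemma~\ref{lem:t-monotone}, now applied to the $(n-1)$-dimensional tangential block. Since $\arctan$ is increasing, it suffices to show
\[
 \lambda_\alpha\bigl(C_{t,T}^{-1/2}MC_{t,T}^{-1/2}\bigr)
 \geq\lambda_\alpha\bigl(C_{1,T}^{-1/2}MC_{1,T}^{-1/2}\bigr)
 \qquad(\alpha=1,\dots,n-1).
\]
Here $M=M(x,s)$ is the same matrix for every $t$. Indeed, $M$ depends only on $\varphi$, $s$, and the boundary geometry, and not on $t$; only the weight $C_{t,T}=w_t(I_T+t^2q\otimes q)$ changes, with $w_t=\sqrt{1+t^2(|q|^2+s^2)}$ and $q=D_T\varphi(x)$ fixed.

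The key step is the observation that $C_{t,T}$ is monotone nondecreasing in $t$ in the Loewner order. The scalar $w_t$ is nondecreasing in $t$, and $I_T+t^2q\otimes q$ is nondecreasing in $t$. These two positive definite matrices commute and are positive multiples of commuting positive operators, so $C_{t,T}\leq C_{1,T}$ for $0\leq t\leq1$. To avoid any commutation subtlety, one can simply diagonalize along $q/|q|$: then $C_{t,T}=w_t\diag(1+t^2|q|^2,1,\dots,1)$, and every entry is nondecreasing in $t$.

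It then remains to pass from $C_{t,T}\leq C_{1,T}$ to the eigenvalue inequality. The eigenvalues of $C^{-1/2}MC^{-1/2}$ are the generalized eigenvalues of the pencil $(M,C)$, that is, the critical values of the Rayleigh quotient $\xi^TM\xi/\xi^TC\xi$. By the Courant--Fischer min--max characterization for this quotient, together with $M\geq0$ (which makes each numerator nonnegative), a smaller denominator $\xi^TC_{t,T}\xi\leq\xi^TC_{1,T}\xi$ gives a larger quotient for every $\xi$. Hence every ordered generalized eigenvalue does not decrease, which is exactly the required inequality. Alternatively, one can copy the proof of Lemma~\ref{lem:t-monotone} verbatim: write $C_{1,T}^{-1/2}=E\,C_{t,T}^{-1/2}$ with a contraction $E$ (diagonal in the $q$-frame) and apply min--max.

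The only point needing care, and it is mild, is the use of $M\geq0$. Without it, shrinking the denominator would decrease negative Rayleigh quotients, and the inequality could fail. This hypothesis is exactly what the statement assumes, so I expect no real obstacle. Summing $\arctan$ of the ordered eigenvalues and adding $\pi/2$ gives $L_t(x,s)\geq L_1(x,s)=\mathcal B(x,s)$.
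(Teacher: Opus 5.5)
Your proposal is correct and follows the paper's own proof. The paper likewise shows that $C_{t,T}\leq C_{1,T}$ in the Loewner order, applies the generalized min--max principle to the pair $(M,C_{t,T})$ (which is where $M\geq0$ enters), and then sums the increasing function $\arctan$; your diagonalization in the $q$-frame and your check that $M(x,s)$ does not depend on $t$ just make explicit details the paper leaves implicit.
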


\begin{proof}
From \eqref{eq:C-block-explicit},
\[
 C_{t,T}=w_t(I_T+t^2q\otimes q)
\]
is nondecreasing in the Loewner order as a function of $t$.  Therefore
$C_{t,T}\leq C_{1,T}$.  The generalized min--max principle for the pair
$(M,C_{t,T})$ gives
\[
 \lambda_\alpha(C_{t,T}^{-1/2}MC_{t,T}^{-1/2})
 \geq
 \lambda_\alpha(C_{1,T}^{-1/2}MC_{1,T}^{-1/2}).
\]
Summing the increasing function $\arctan$ proves the claim.
\end{proof}

\begin{proposition}\label{prop:double-normal}
Suppose
\begin{equation*}
 D^2u\geq0,
 \qquad F(A_t[u])=\theta,
 \qquad \mathcal B(x,s)\geq\theta+\sigma,
\end{equation*}
where $s=u_\nu(x)$ and $0<\sigma<\pi/2$.  If $|Du|\leq P$ and $|z|\leq Z$, then
\begin{equation*}
 r_\nu\leq\Xi(P,Z)+
 \bigl[w_*(1+P^2)+\Xi(P,Z)\bigr]\cot\sigma.
\end{equation*}
In particular, $r_\nu\leq C(P,Z)(1+\sigma^{-1})$.
\end{proposition}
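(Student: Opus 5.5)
The proof combines the exact identity of Theorem \ref{thm:exact-gap-intro} with the monotonicity in Lemma \ref{lem:L-monotone}. At the given boundary point, apply Lemma \ref{lem:exact-Schur} and Theorem \ref{thm:exact-gap-intro} to the solution of $F(A_t[u])=\theta$. This yields
\[
 r_\nu=\beta_t+\alpha_t\cot\delta_t,
 \qquad \delta_t=L_t(x,s)-\theta\in(0,\pi/2].
\]
Note that $M=D_T^2u\geq0$ because $D^2u\geq0$, so $L_t$ is defined and Lemma \ref{lem:L-monotone} applies.

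The key step is a lower bound for the gap. By Lemma \ref{lem:L-monotone},
\[
 L_t(x,s)\geq\mathcal B(x,s)\geq\theta+\sigma,
 \qquad\text{so}\qquad \delta_t\geq\sigma.
\]
Since $\delta_t\leq\pi/2$ and $\sigma<\pi/2$, both angles lie in $(0,\pi/2]$. The cotangent is decreasing there, so $0\leq\cot\delta_t\leq\cot\sigma$.

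Next, insert the quantitative bounds of Lemma \ref{lem:alpha-beta}, namely $\beta_t\leq\Xi(P,Z)$ and $0<\alpha_t\leq w_*(1+P^2)+\Xi(P,Z)$. Because $\cot\delta_t\geq0$, the product $\alpha_t\cot\delta_t$ is at most the product of the upper bounds. This gives
\[
 r_\nu\leq\Xi(P,Z)+\bigl[w_*(1+P^2)+\Xi(P,Z)\bigr]\cot\sigma.
\]
For the final form, use $\cot\sigma\leq1/\sigma$ on $(0,\pi/2)$, which follows from $\tan\sigma\geq\sigma$. Then $r_\nu\leq C(P,Z)(1+\sigma^{-1})$.

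There is no real obstacle here. The only points needing care are the sign conditions that justify multiplying the upper bounds, and the endpoint case $\delta_t=\pi/2$. The latter is already covered by the convention $\cot(\pi/2)=0$ in Theorem \ref{thm:exact-gap-intro}.
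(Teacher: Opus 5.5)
Your proposal is correct and follows the paper's proof essentially step for step. You use Lemma \ref{lem:L-monotone} to get $\delta_t=L_t-\theta\geq\sigma$, then combine the identity \eqref{eq:R-alpha-beta}, the bounds of Lemma \ref{lem:alpha-beta}, and monotonicity of $\cot$, with the endpoint $\delta_t=\pi/2$ handled by $\cot(\pi/2)=0$; your justification of $\cot\sigma\leq\sigma^{-1}$ for the final form is a detail the paper leaves implicit.
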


\begin{proof}
By Lemma \ref{lem:L-monotone},
\[
 \delta_t=L_t-\theta\geq\mathcal B-\theta\geq\sigma.
\]
 Apply \eqref{eq:R-alpha-beta}, the upper bounds in Lemma
 \ref{lem:alpha-beta}, and the monotonicity of $\cot$ on $(0,\pi/2)$.  At
 the endpoint $\delta_t=\pi/2$, formula \eqref{eq:R-alpha-beta} gives
 $r_\nu=\beta_t$ directly.
\end{proof}

At $t=1$, let
\[
 \delta(x)=\mathcal B(x,u_\nu(x))-\theta.
\]
The exact formula also gives the lower bound
\begin{equation}\label{eq:R-two-sided}
 \cot\delta(x)-\Xi(P,Z)
 \leq u_{\nu\nu}(x)
 \leq\Xi(P,Z)+
 \bigl[w_*(1+P^2)+\Xi(P,Z)\bigr]\cot\delta(x).
\end{equation}
Indeed, $\alpha_1\geq1$, $|\beta_1|\leq\Xi$, and $\cot\delta\geq0$.

\begin{proof}[Proof of Corollary \ref{cor:boundary-hessian-intro}]
The boundary identity \eqref{eq:tangential-block-intro}, the bounds for
$u_\nu$ and $\varphi$, and the fixed boundary geometry control the tangential
block $M$.  Theorem \ref{thm:mixed-intro} controls the mixed block $z$ by a
constant $Z$ independent of $t$.  Proposition \ref{prop:double-normal} then
gives
\[
 u_{\nu\nu}\leq C_0+C_1\cot\sigma.
\]
Combining the estimates for the tangential, mixed, and double-normal blocks
yields the asserted boundary Hessian bound.  The displayed dependencies in
Theorem \ref{thm:mixed-intro} and Lemma \ref{lem:alpha-beta} show that
$C_0$ and $C_1$ are independent of both $t$ and $\sigma$.
\end{proof}

The dependence on $Z$ in Proposition \ref{prop:double-normal} is essential.  A fixed positive limiting gap does not control the double-normal entry if the mixed block is allowed to grow.

\begin{example}\label{ex:mixed-jet}
In dimension two, at $Du=0$, consider
\begin{equation*}
 X_R=
 \begin{pmatrix}
  1&\sqrt{R-1}\\
  \sqrt{R-1}&R
 \end{pmatrix},
 \qquad R>1.
\end{equation*}
Then $X_R>0$ and $\det X_R=1$.  If $\lambda_1,\lambda_2$ are its eigenvalues, the positive-branch identity for the tangent of a sum gives
\[
 \arctan\lambda_1+\arctan\lambda_2=\frac\pi2.
\]
On the other hand, keeping the tangential entry $M=1$ fixed and sending the normal entry to infinity gives the limiting phase
\[
 L=\frac\pi2+\arctan1=\frac{3\pi}{4}.
\]
Thus the gap is the fixed number $\pi/4$, while
\[
 R\to\infty,
 \qquad |z|=\sqrt{R-1}\to\infty.
\]
This is why the mixed estimate and the normal limiting-phase condition are logically separate parts of the boundary argument.
\end{example}

\section{From the boundary to global curvature control}\label{sec:global}

For convex constant-phase graphs, the boundary Hessian estimate propagates globally by a direct maximum principle for the mean curvature.  We include the calculation because it is short and keeps the compactness theorem independent of any interior estimate from the literature.

Let $h_{ij}$ be the second fundamental form of $\Gamma_u$, let
\[
 H=\sum_i\kappa_i
\]
be its mean curvature, and let $\nabla$ denote the induced covariant derivative.  We use an orthonormal frame on the graph.

The mean curvature satisfies the following boundary maximum principle.

\begin{proposition}\label{prop:H-max}
Let $\Gamma_u$ be a smooth convex graph satisfying
\begin{equation}\label{eq:curvature-phase-global}
 F(h)=\sum_i\arctan\kappa_i=\theta.
\end{equation}
Then
\begin{equation}\label{eq:H-max}
 \sup_{\Gamma_u}H\leq\sup_{\partial\Gamma_u}H.
\end{equation}
\end{proposition}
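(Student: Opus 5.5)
The plan is to show that $H$ is a subsolution of the linearized operator on the graph and then apply the weak maximum principle. Let $\mathcal F^{ij}=\partial F/\partial h_{ij}$, evaluated at $h$, and set $\mathcal L=\mathcal F^{ij}\nabla_i\nabla_j$. This is an elliptic operator on $\Gamma_u$ because $\mathcal F^{ij}=((I+h^2)^{-1})^{ij}>0$. If I can prove $\mathcal L H\geq0$, or more modestly $\mathcal L H\geq -C|\nabla H|$ with $C$ locally bounded, then the Hopf/weak maximum principle on the compact graph $\overline{\Gamma_u}$ gives \eqref{eq:H-max}.

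At a point, I choose an orthonormal frame that diagonalizes $h$, with $h_{ij}=\kappa_i\delta_{ij}$ and $f_i=(1+\kappa_i^2)^{-1}$. The computation has three steps.

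First, I differentiate \eqref{eq:curvature-phase-global} twice in the direction $e_k$ and sum over $k$:
\[
 \sum_k\mathcal F^{ij}h_{ij,kk}=-\sum_k F^{ij,rs}h_{ij,k}h_{rs,k}.
\]
The function $\arctan$ is concave on $[0,\infty)$, and the off-diagonal coefficients $(f_i-f_j)/(\kappa_i-\kappa_j)$ are $\leq0$ on the convex branch. The right-hand side is therefore the nonnegative quantity
\[
 \sum_{i,k}\frac{2\kappa_i}{(1+\kappa_i^2)^2}h_{ii,k}^2
 +\sum_{i\neq j,k}\frac{\kappa_i+\kappa_j}{(1+\kappa_i^2)(1+\kappa_j^2)}h_{ij,k}^2 .
\]

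Second, I commute derivatives. The Codazzi equations and the Gauss/Ricci commutation formula for hypersurfaces in $\R^{n+1}$ give
\[
 h_{kk,ij}=h_{ij,kk}+\kappa_i^2H\,\delta_{ij}-\kappa_i|h|^2\delta_{ij}.
\]
Hence
\[
 \mathcal L H=-\sum_kF^{ij,rs}h_{ij,k}h_{rs,k}
 +\sum_i f_i\kappa_i\bigl(\kappa_iH-|h|^2\bigr).
\]

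Third, I analyze the zeroth-order term. Symmetrizing gives
\[
 \sum_i f_i\kappa_i\bigl(\kappa_iH-|h|^2\bigr)
 =\tfrac12\sum_{i,j}\kappa_i\kappa_j(\kappa_i-\kappa_j)(f_i-f_j).
\]
This quantity is $\leq0$, since $f$ is decreasing in $\kappa$. This is the main obstacle: the curvature term has the wrong sign and must be absorbed by the nonnegative third-order term together with the fact that the phase is constant.

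My first attempt will follow the Warren--Yuan/Chen--Warren--Yuan strategy for the Hessian special Lagrangian equation. There, $\ln\sqrt{\det(I+h^2)}=\sum_i\ln\sqrt{1+\kappa_i^2}$ is shown to be subharmonic on the convex branch by using the minimal-submanifold structure. Here the analogous structure is Smith's special Lagrangian curvature: the Gauss-map lift of $\Gamma_u$ is special Lagrangian in a suitable space, and $\mathcal L$ should be comparable to its Laplacian. Concretely, I will rewrite the third-order term using Codazzi symmetry $h_{ij,k}=h_{ik,j}$, in particular the terms $h_{ii,k}$ and $h_{ik,i}$ with $i\neq k$. I will also use the first-derivative identity $\sum_i f_ih_{ii,k}=0$ from the equation to eliminate one diagonal derivative. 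The aim is to show that the remaining quadratic form dominates the negative curvature term up to a multiple of $|\nabla H|^2/H$, or up to a gradient term.

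If this cancellation turns out not to close for $H$ itself, the fallback is as follows. I would prove the subharmonicity for an auxiliary quantity $\Psi(\kappa)$ that is monotone and comparable to $H$ on the convex branch, such as $\sum_i\ln(1+\kappa_i^2)$ or $\sum_i\kappa_i$ modified along the level set. The maximum principle would then give $\sup\Psi\leq\sup_{\partial}\Psi$. At fixed phase, $H$ and $\Psi$ are controlled by one another with constants depending only on $n$ and $\theta$, and this would yield \eqref{eq:H-max}, possibly in a slightly weaker form that is still sufficient for Theorem~\ref{thm:compactness-intro}.

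The final step is routine. With $\mathcal L H\geq-C|\nabla H|$ and $\mathcal L$ uniformly elliptic on compact subsets of the smooth graph, $H$ cannot attain an interior strict maximum exceeding its boundary supremum, which proves \eqref{eq:H-max}.
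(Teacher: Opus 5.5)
\textbf{There is a genuine gap: a sign error in your second step.} Your commutation formula
\[
h_{kk,ij}=h_{ij,kk}+\kappa_i^2H\delta_{ij}-\kappa_i|h|^2\delta_{ij}
\]
has the curvature terms reversed. The Simons identity for hypersurfaces of $\R^{n+1}$ is
\[
\Delta h_{ij}=\nabla_i\nabla_jH+Hh_{im}h_{mj}-|h|^2h_{ij}.
\]
For a minimal hypersurface this gives the familiar $\tfrac12\Delta|h|^2=|\nabla h|^2-|h|^4$. The identity is cubic in $h$, so it does not depend on the choice of normal. In a diagonal frame it reads $H_{,ii}=h_{ii,kk}-\kappa_i^2H+\kappa_i|h|^2$. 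The zeroth-order term in $\mathcal LH$ is therefore
\[
\sum_i f_i\kappa_i\bigl(|h|^2-\kappa_iH\bigr)=\tfrac12\sum_{i,j}\kappa_i\kappa_j(\kappa_i-\kappa_j)(f_j-f_i)\geq0 .
\]
This is exactly the negative of what you computed. It is nonnegative because $(f_j-f_i)(\kappa_i-\kappa_j)=(\kappa_i+\kappa_j)(\kappa_i-\kappa_j)^2/\bigl((1+\kappa_i^2)(1+\kappa_j^2)\bigr)\geq0$ on the convex branch. Your first step is correct: $-F^{ij,rs}h_{ij,k}h_{rs,k}\geq0$. Combining the two gives $\mathcal LH\geq0$ directly, and the weak maximum principle proves \eqref{eq:H-max}. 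This is precisely the paper's argument; the ``main obstacle'' you identify does not exist.

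\textbf{As written, the proposal does not prove the statement.} After the sign error you only state an aim: to absorb a negative zeroth-order term using Codazzi symmetry and $\sum_i f_ih_{ii,k}=0$. You also offer a fallback that, by your own account, may yield only a weaker inequality than \eqref{eq:H-max}, and you do not establish subharmonicity of the auxiliary $\Psi$. The absorption plan cannot succeed as a pointwise algebraic argument. The jet $\nabla h=0$ satisfies the constraint $\sum_if_ih_{ii,k}=0$, and for that jet the third-order term and $\nabla H$ both vanish. Nothing would then be left to dominate a strictly negative algebraic term, so no bound $\mathcal LH\geq-C|\nabla H|$ could follow. The fix is simply to correct the sign in the Simons identity. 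Your Steps 1 and 3 plus the final maximum-principle step then form a complete proof.
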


\begin{proof}
Differentiating \eqref{eq:curvature-phase-global} once and twice gives
\begin{equation}\label{eq:F-first-second}
 F^{ij}\nabla_kh_{ij}=0,
 \qquad
 F^{ij}\Delta h_{ij}
 =-F^{ij,rs}\nabla_kh_{ij}\nabla_kh_{rs}.
\end{equation}
The Euclidean Simons identity is
\begin{equation}\label{eq:Simons}
 \nabla_i\nabla_jH
 =\Delta h_{ij}-Hh_{im}h_{mj}+\tr(h^2)h_{ij}.
\end{equation}
Contracting \eqref{eq:F-first-second} and \eqref{eq:Simons}, we obtain
\begin{align}
 F^{ij}\nabla_i\nabla_jH
 ={}&-F^{ij,rs}\nabla_kh_{ij}\nabla_kh_{rs}\notag\\
  &+F^{ij}h_{ij}\tr(h^2)
  -HF^{ij}h_{im}h_{mj}.
 \label{eq:LH}
\end{align}
The first term is nonnegative because $F$ is concave on the positive cone.
To determine the remaining algebraic term, choose an orthonormal frame in
which $h$ is diagonal and put
\[
 f_i=\frac1{1+\kappa_i^2}.
\]
Then $F^{ij}=f_i\delta_{ij}$, and the last two terms in \eqref{eq:LH}
equal
\begin{equation*}
 \begin{aligned}
 &\left(\sum_i f_i\kappa_i\right)
   \left(\sum_j\kappa_j^2\right)
 -\left(\sum_j\kappa_j\right)
   \left(\sum_i f_i\kappa_i^2\right)\\
 &\qquad
 =\sum_{i,j}f_i\kappa_i\kappa_j(\kappa_j-\kappa_i)\\
 &\qquad
 =\frac12\sum_{i,j}
 (f_j-f_i)(\kappa_i-\kappa_j)\kappa_i\kappa_j.
 \end{aligned}
\end{equation*}
Since
\begin{equation*}
 (f_j-f_i)(\kappa_i-\kappa_j)
 =\frac{(\kappa_i+\kappa_j)(\kappa_i-\kappa_j)^2}
 {(1+\kappa_i^2)(1+\kappa_j^2)}\geq0,
\end{equation*}
we have $F^{ij}\nabla_i\nabla_jH\geq0$.  Since $F^{ij}$ is positive
definite at every finite curvature matrix, the maximum principle proves
\eqref{eq:H-max}.  The calculation already allows zero principal curvatures:
all displayed expressions are continuous for $\kappa_i\geq0$, so the
semidefinite case follows directly by continuity and requires no perturbed
solution.
\end{proof}

The same argument controls the vertical homotopy uniformly.

\begin{proposition}\label{prop:homotopy-global}
Let $u$ be a convex solution of $G_t(Du,D^2u)=\theta$.  If $|Du|\leq P$, then
\begin{equation}\label{eq:homotopy-global}
 \sup_{\overline\Om}|D^2u|
 \leq C(n,P)\left(1+\sup_{\partial\Om}|D^2u|\right)
\end{equation}
uniformly for $t\in[0,1]$.
\end{proposition}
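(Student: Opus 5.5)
The idea is to reduce to Proposition \ref{prop:H-max} by the geometric interpretation of the homotopy already noted in Section \ref{sec:linearization}. For $t\in(0,1]$, the operator $G_t$ is obtained from the graph operator applied to $tu$: the eigenvalues of $A_t[u]$ are $t^{-1}$ times the principal curvatures of $\Gamma_{tu}$. Indeed,
\[
 A_1[tu]=\frac{t}{w_t(Du)}B_t(Du)D^2uB_t(Du)=tA_t[u].
\]
Thus $\kappa_i[tu]=t\lambda_i(A_t[u])$, and the equation $G_t(Du,D^2u)=\theta$ becomes
\[
 \sum_i\arctan\bigl(t^{-1}\kappa_i[tu]\bigr)=\theta
\]
on the convex graph $\Gamma_{tu}$. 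I will rerun the computation of Proposition \ref{prop:H-max} for this rescaled function $\tilde F(h)=\sum_i\arctan(\kappa_i/t)$. Equivalently, I will work with the curvature matrix $A_t$ of the scaled graph and use $f_i=t^{-1}(1+\kappa_i^2/t^2)^{-1}$.

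In this computation, concavity of $\tilde F$ on the positive cone still gives a nonnegative first term. The algebraic term becomes
\[
 \tfrac12\sum_{i,j}(f_j-f_i)(\kappa_i-\kappa_j)\kappa_i\kappa_j.
\]
Its sign is unchanged, since $f$ is still decreasing in $\kappa\geq0$. Hence the mean curvature $H_t$ of $\Gamma_{tu}$ satisfies $\sup H_t\leq\sup_{\partial}H_t$. Dividing by $t$, $\tr A_t[u]$ attains its maximum on the boundary. The case $t=0$, which is the Hessian special Lagrangian equation, should follow either by a direct analogous computation for $\Delta u$ or by letting $t\to0$. I expect the direct computation to be cleaner: differentiating $F(D^2u)=\theta$ twice gives $F^{ij}\partial_{ij}\Delta u\geq0$ by concavity.

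Next I convert between $\tr A_t$ and $|D^2u|$ with constants depending only on $n$ and $P$. Since $D^2u\geq0$, we have $|D^2u|\leq\Delta u$. Moreover $w_t\leq(1+P^2)^{1/2}$ and $(1+P^2)^{-1/2}I\leq B_t\leq I$, so
\[
 (1+P^2)^{-3/2}\Delta u\leq\tr A_t[u]\leq\Delta u\leq n|D^2u|.
\]
Interior $\tr A_t\leq\sup_{\partial\Om}\tr A_t\leq n\sup_{\partial\Om}|D^2u|$ then gives $\sup_{\overline\Om}|D^2u|\leq n(1+P^2)^{3/2}\sup_{\partial\Om}|D^2u|$, which is \eqref{eq:homotopy-global}.

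The main obstacle is checking that the Simons identity argument really transfers to the scaled graph. The maximum principle is on the surface $\Gamma_{tu}$, with $H_t$ a function there, and boundary values correspond via $x\mapsto(x,tu(x))$; this correspondence is harmless. The genuine point is uniformity in $t$: all constants above are independent of $t$, and only the sign structure of $f_i$ was used, so no degeneration as $t\to0$ occurs. I would double-check the scaling identity $A_1[tu]=tA_t[u]$ first, since the whole reduction rests on it.
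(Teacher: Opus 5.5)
Your proposal is correct and follows the paper's proof essentially step for step: you rescale to the graph of $tu$ via $A_1[tu]=tA_t[u]$, rerun the Simons-identity computation of Proposition~\ref{prop:H-max} with $f_i=t/(t^2+\kappa_i^2)$ (whose monotonicity gives the sign of the algebraic term), and convert $t^{-1}H_t=\tr A_t[u]$ to $\tr D^2u$ using $w_t^{-2}I\leq B_t^2\leq I$ and $w_t\leq(1+P^2)^{1/2}$. The only cosmetic difference is at $t=0$, where you apply the maximum principle to $\Delta u$ rather than to each $u_{\xi\xi}$ as the paper does; both follow from the same concavity identity.
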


\begin{proof}
For $t>0$, consider the graph of $U=tu$.  Its principal curvatures are
\[
 \kappa_i^t=t\lambda_i(A_t[u]),
\]
and denote its mean curvature by $H_t=\sum_i\kappa_i^t$.  Its equation is
\begin{equation*}
 f_t(\kappa^t)=\sum_i\arctan(\kappa_i^t/t)=\theta.
\end{equation*}
 The function $f_t$ is elliptic and concave for $\kappa^t\geq0$.  In fact,
 \[
  (f_t)_i=\frac{t}{t^2+(\kappa_i^t)^2},
 \]
 and the algebraic term in the mean-curvature calculation has the required sign because
 \[
  \bigl((f_t)_j-(f_t)_i\bigr)(\kappa_i^t-\kappa_j^t)
  =\frac{t(\kappa_i^t+\kappa_j^t)(\kappa_i^t-\kappa_j^t)^2}
  {[t^2+(\kappa_i^t)^2][t^2+(\kappa_j^t)^2]}
  \geq0.
 \]
 Repeating Proposition \ref{prop:H-max} therefore gives
\[
 \sup_{\Gamma_{tu}}H_t\leq\sup_{\partial\Gamma_{tu}}H_t.
\]
 Moreover,
 \[
  H_t=t\tr A_t[u]
  =\frac{t}{w_t}\tr\bigl(B_t^2D^2u\bigr),
 \]
 and, since $D^2u\geq0$ and $w_t^{-2}I\leq B_t^2\leq I$,
 \[
  \frac{t}{w_t^3}\tr D^2u
  \leq H_t
  \leq\frac{t}{w_t}\tr D^2u.
 \]
 Together with $|D^2u|\leq\tr D^2u\leq n|D^2u|$ and $w_t\leq(1+P^2)^{1/2}$, the boundary maximum principle for $H_t$, followed by division by $t$, proves \eqref{eq:homotopy-global} uniformly for $t>0$.

When $t=0$, the equation is $F(D^2u)=\theta$.  For a fixed unit vector $\xi$, twice differentiating in the $\xi$ direction gives
\[
 F^{ij}(u_{\xi\xi})_{ij}
 =-F^{ij,rs}u_{ij\xi}u_{rs\xi}\geq0.
\]
The maximum principle bounds $u_{\xi\xi}$ by its boundary maximum.  Taking the supremum over $\xi$ proves the result at $t=0$.
\end{proof}

We now prove Theorem \ref{thm:compactness-intro}.  First observe that the actual gap is always positive for a smooth finite-curvature solution.  Indeed, Lemma \ref{lem:exact-Schur} gives
\begin{equation*}
 \delta(x)=\arctan\frac{\alpha_1(x)}
 {u_{\nu\nu}(x)-\beta_1(x)}>0.
\end{equation*}
The convention at a zero denominator gives $\delta=\pi/2$.  Continuity and compactness of the boundary imply $\delta_*(u)>0$.

\begin{proof}[Proof of Theorem \ref{thm:compactness-intro}]
On the boundary,
\begin{equation*}
 M=D^2_{\partial\Om}\varphi-u_\nu\II_{\partial\Om}.
\end{equation*}
Thus the first two bounds in \eqref{eq:compact-data-intro} control the
tangential block of $D^2u$, while the third controls the mixed block.  Under
the hypotheses of Theorem \ref{thm:mixed-intro}, this third bound is supplied
by the mixed boundary estimate; in the present theorem it is included in the
definition of controlled data.  At every boundary point,
\eqref{eq:R-two-sided} gives
\begin{equation*}
 u_{\nu\nu}\leq C(1+\cot\delta_*(u)).
\end{equation*}
Therefore
\begin{equation}\label{eq:boundary-Hessian-delta}
 \sup_{\partial\Om}|D^2u|
 \leq C(1+\cot\delta_*(u)).
\end{equation}

The graph matrix relation
\begin{equation*}
 D^2u=wB(Du)^{-1}A[u]B(Du)^{-1}
\end{equation*}
and $|Du|\leq P$ give
\begin{equation}\label{eq:norm-comparison}
 |A[u]|\leq|D^2u|
 \leq(1+P^2)^{3/2}|A[u]|.
\end{equation}
Because all principal curvatures are nonnegative, $|A[u]|\leq H$.  Proposition \ref{prop:H-max}, \eqref{eq:boundary-Hessian-delta}, and \eqref{eq:norm-comparison} prove \eqref{eq:global-upper-intro}.

Choose $x_*\in\partial\Om$ where the minimum in \eqref{eq:delta-star-intro} is attained.  The lower half of \eqref{eq:R-two-sided} gives
\begin{equation*}
 u_{\nu\nu}(x_*)
 \geq\cot\delta_*(u)-C,
\end{equation*}
which proves \eqref{eq:global-lower-intro}.  The equivalences in \eqref{eq:compact-equivalence-intro} follow immediately from the upper and lower bounds and the norm comparison.
\end{proof}

Combining the mixed estimate, a normal-window gap, and Proposition \ref{prop:homotopy-global} gives the following a priori estimate.

\begin{corollary}\label{cor:homotopy-C2}
Assume the hypotheses of Theorem \ref{thm:mixed-intro}.  Suppose that every boundary normal derivative lies in a fixed compact window $J_x$ and that
\begin{equation}\label{eq:window-gap-global}
 \inf_{\substack{x\in\partial\Om,\,s\in J_x\\ M(x,s)\geq0}}
 \bigl(\mathcal B(x,s)-\theta\bigr)\geq\sigma>0.
\end{equation}
Then every convex solution of the homotopy satisfies
\begin{equation*}
 \|u\|_{C^2(\overline\Om)}
 \leq C_0+C_1\cot\sigma,
\end{equation*}
where the constants are independent of $t\in[0,1]$ and $\sigma$.
\end{corollary}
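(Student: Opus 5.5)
The plan is to assemble the three block estimates at the boundary and then propagate them inside with Proposition~\ref{prop:homotopy-global}.

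\emph{Gradient.} Because $Du(\overline\Om)\subset K$ with $K$ fixed and compact (Lemma~\ref{lem:gradient-localization} applied to the window $J_x$), the gradient satisfies $|Du|\leq P$ with $P$ depending only on $K$. Since $u=\underline u$ on $\partial\Om$, integrating $Du$ from the boundary gives $\|u\|_{C^1(\overline\Om)}\leq C$, uniformly in $t$.

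\emph{Tangential and mixed blocks.} On $\partial\Om$, the tangential block is $M(x,u_\nu)=D^2_{\partial\Om}\varphi-u_\nu\II_{\partial\Om}$. It is bounded because $u_\nu\in J_x$ and $\varphi=\underline u|_{\partial\Om}$ is fixed. Theorem~\ref{thm:mixed-intro} bounds the mixed block by a constant $Z$, independent of $t$ and of $\sigma$. This independence holds because (MRC), $K$ and $\underline u$ are fixed data, and the window gap plays no role in that argument.

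\emph{Double-normal entry.} Convexity of $u$ gives $M=D^2_Tu\geq0$, so the realized slope $s=u_\nu(x)$ lies in $J_x$ with $M(x,s)\geq0$. Hypothesis~\eqref{eq:window-gap-global} therefore gives $\mathcal B(x,u_\nu(x))\geq\theta+\sigma$ at every boundary point. If $\sigma\geq\pi/2$ we may replace it by a smaller value, since only $\min\{\sigma,\pi/4\}$ matters up to constants. Proposition~\ref{prop:double-normal}, which uses Lemma~\ref{lem:L-monotone} to handle $t<1$, then yields
\[
 0\leq u_{\nu\nu}\leq\Xi(P,Z)+\bigl[w_*(1+P^2)+\Xi(P,Z)\bigr]\cot\sigma.
\]
Combining the three blocks gives $\sup_{\partial\Om}|D^2u|\leq C_0'+C_1'\cot\sigma$, with constants independent of $t$ and $\sigma$.

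\emph{Interior.} Proposition~\ref{prop:homotopy-global} gives $\sup_{\overline\Om}|D^2u|\leq C(n,P)(1+\sup_{\partial\Om}|D^2u|)$ uniformly for $t\in[0,1]$. Together with the $C^1$ bound this proves $\|u\|_{C^2(\overline\Om)}\leq C_0+C_1\cot\sigma$.

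The only delicate point is ensuring that the window hypothesis, stated for pairs with $M(x,s)\geq0$, actually applies at the realized slope. Convexity of $u$ supplies $M\geq0$ there, and $u_\nu\in J_x$ by assumption. The remaining task is to check that every constant ($P$, $Z$, and those in Proposition~\ref{prop:homotopy-global}) is independent of $t$ and $\sigma$, which holds by their displayed dependencies.
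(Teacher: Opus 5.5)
Your proposal is correct and follows the paper's route. It assembles the boundary Hessian bound as in Corollary~\ref{cor:boundary-hessian-intro}: the tangential block comes from the boundary identity, the mixed block from Theorem~\ref{thm:mixed-intro}, and the double-normal entry from Proposition~\ref{prop:double-normal} at the realized slope, which lies in the window with $M\geq0$ by convexity. The bound is then propagated inside by Proposition~\ref{prop:homotopy-global}.

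One cosmetic point: the case $\sigma>\pi/2$ cannot occur. At the realized slope, Lemma~\ref{lem:L-monotone} and the exact identity give $\mathcal B-\theta\leq L_t-\theta=\delta_t\leq\pi/2$, so $\sigma\leq\pi/2$ automatically. This matters because your replacement argument would not give the stated form $C_0+C_1\cot\sigma$ when $\cot\sigma<0$.
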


\begin{remark}\label{rem:two-recession-conditions}
(MRC) is used only to control $u_{\xi\nu}$ for tangential unit vectors
$\xi$.  The limiting-phase gap \eqref{eq:window-gap-global} is used only
after that estimate, to control $u_{\nu\nu}$.  Example \ref{ex:mixed-jet} shows that the normal gap cannot replace (MRC), while the
formula \eqref{eq:R-alpha-beta} shows that (MRC) cannot prevent double-normal
blow-up if the actual gap collapses.
\end{remark}

\section{Sharpness models}\label{sec:radial}

This section gives two sharpness examples.  A radial family attains the optimal boundary-gap blow-up rate while all lower-order boundary data remain controlled.  A rank-loss model then shows that a strict lower subsolution does not by itself preserve strict convexity.

We now prove Theorem \ref{thm:radial-intro} by constructing a family of
solutions on one fixed ball.

\begin{proof}[Proof of Theorem \ref{thm:radial-intro}]
\medskip
\noindent\emph{Step 1: the radial equation.}

Let $u(x)=f(r)$, where $r=|x|$, and put
\begin{equation*}
 q(r)=f'(r),
 \qquad
 s(r)=\frac{q(r)}{\sqrt{1+q(r)^2}},
 \qquad
 \zeta(r)=\frac{s(r)}r.
\end{equation*}
The radial and tangential principal curvatures are
\begin{equation}\label{eq:radial-curvatures}
 \kappa_r=s'(r)=\zeta+r\zeta',
 \qquad
 \kappa_T=\frac{s(r)}r=\zeta,
\end{equation}
where $\kappa_T$ has multiplicity $n-1$.  Thus the equation becomes
\begin{equation*}
 \arctan(\zeta+r\zeta')+(n-1)\arctan\zeta=\theta.
\end{equation*}

Assume
\[
 \frac\pi2\leq\theta<\frac{n\pi}{2}
\]
and define the endpoint angle and the two corresponding slopes by
\begin{equation*}
 \vartheta_*=\frac{\theta-\pi/2}{n-1},
 \qquad \zeta_*=\tan\vartheta_*,
 \qquad \zeta_0=\tan(\theta/n).
\end{equation*}
Since
\begin{equation*}
 \frac\theta n-\vartheta_*
 =\frac{n\pi/2-\theta}{n(n-1)}>0,
\end{equation*}
we have $0\leq\zeta_*<\zeta_0$.

Fix $0<R_0<R_1$ so small that
\begin{equation}\label{eq:R1-z0}
 R_1\zeta_0<1.
\end{equation}
For
\begin{equation*}
 0<\delta<\delta_0<\frac\pi2-\frac\theta n,
\end{equation*}
let $\zeta_\delta$ solve
\begin{equation}\label{eq:z-ODE}
 r\zeta_\delta'
 =\tan\bigl(\theta-(n-1)\arctan\zeta_\delta\bigr)-\zeta_\delta,
\end{equation}
with initial value
\begin{equation*}
 \zeta_\delta(R_0)
 =\tan\left(\vartheta_*+\frac\delta{n-1}\right).
\end{equation*}
The right hand side of \eqref{eq:z-ODE}, viewed as a function of $\zeta$,
is positive on $(\zeta_*,\zeta_0)$ and vanishes at $\zeta_0$.
Uniqueness for the autonomous equation in
logarithmic time gives
\begin{equation*}
 \zeta_\delta(R_0)<\zeta_\delta(r)<\zeta_0,
 \qquad \zeta_\delta'(r)>0
 \quad (R_0\leq r\leq R_1).
\end{equation*}

Set
\begin{equation*}
 f_\delta'(r)=
 \frac{r\zeta_\delta(r)}{\sqrt{1-r^2\zeta_\delta(r)^2}},
 \qquad
 f_\delta(R_0)=0.
\end{equation*}
Condition \eqref{eq:R1-z0} makes the first derivatives uniformly bounded.  Equations \eqref{eq:z-ODE} and \eqref{eq:radial-curvatures} give
\begin{equation*}
 \kappa_r
 =\tan\bigl(\theta-(n-1)\arctan\zeta_\delta\bigr),
 \qquad \kappa_T=\zeta_\delta.
\end{equation*}
Hence $u_\delta(x)=f_\delta(|x|)$ is a smooth strictly convex solution of
$F(A[u_\delta])=\theta$ on $R_0<|x|<R_1$.  At $r=R_0$,
\begin{equation*}
 \kappa_r(R_0)=\cot\delta.
\end{equation*}
Moreover, since
\begin{equation*}
 f_\delta''
 =\frac{\kappa_r}{(1-r^2\zeta_\delta^2)^{3/2}},
\end{equation*}
we have
\begin{equation}\label{eq:f-second-blow}
 f_\delta''(R_0)\asymp\cot\delta\asymp\delta^{-1}.
\end{equation}

\medskip
\noindent\emph{Step 2: the boundary gap under a rank-one escape.}

We shall restrict the radial solutions to a fixed ball tangent to the sphere $\{|x|=R_0\}$.  The following rank-one formula identifies the limiting gap at an arbitrary boundary point.

\begin{lemma}\label{lem:rank-one}
Let $A>0$ and $v\neq0$.  Put
\begin{equation*}
 \mathcal N_v=\ip{(I+A^2)^{-1}v}{v},
 \qquad
 \mathcal D_v=\ip{A(I+A^2)^{-1}v}{v}.
\end{equation*}
Then
\begin{equation}\label{eq:rank-one-limit}
 \lim_{T\to\infty}
 \bigl(F(A+Tv\otimes v)-F(A)\bigr)
 =\arctan\frac{\mathcal N_v}{\mathcal D_v}.
\end{equation}
If $\kappa_{\min}$ and $\kappa_{\max}$ are the extreme eigenvalues of $A$, then
\begin{equation*}
 \frac1{\kappa_{\max}}
 \leq\frac{\mathcal N_v}{\mathcal D_v}
 \leq\frac1{\kappa_{\min}}.
\end{equation*}
\end{lemma}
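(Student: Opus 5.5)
I will prove \eqref{eq:rank-one-limit} with the complex-determinant phase, the same device used in Lemma~\ref{lem:exact-Schur}. Since $A>0$ and $Tv\otimes v\geq0$, all eigenvalues stay nonnegative along $T\mapsto A+Tv\otimes v$. On this branch $F(A+Tv\otimes v)$ equals the continuously lifted argument of $\det(I+\sqrt{-1}(A+Tv\otimes v))$, so the phases add with no $2\pi$ ambiguity. Put $Z=I+\sqrt{-1}A$, which is invertible. The matrix determinant lemma gives
\[
 \det\bigl(Z+\sqrt{-1}T\,v\otimes v\bigr)
 =\det Z\,\bigl(1+\sqrt{-1}T\,v^TZ^{-1}v\bigr).
\]
Hence $F(A+Tv\otimes v)-F(A)=\arg\bigl(1+\sqrt{-1}T\,\omega\bigr)$, where $\omega=v^TZ^{-1}v$ and the argument is the lift starting from $0$ at $T=0$.

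Next I compute $\omega$. Because $A$ is real symmetric, $Z^{-1}=(I-\sqrt{-1}A)(I+A^2)^{-1}$, and therefore
\[
 \omega=\mathcal N_v-\sqrt{-1}\,\mathcal D_v.
\]
This gives $1+\sqrt{-1}T\omega=(1+T\mathcal D_v)+\sqrt{-1}T\mathcal N_v$. Since $\mathcal N_v>0$ and $\mathcal D_v>0$ (because $A>0$ and $v\neq0$), this number stays in the open first quadrant for all $T\geq0$. So the lifted argument is simply $\arctan\frac{T\mathcal N_v}{1+T\mathcal D_v}$. Letting $T\to\infty$ gives $\arctan(\mathcal N_v/\mathcal D_v)$, which is \eqref{eq:rank-one-limit}.

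For the bounds I diagonalize $A$ with eigenvalues $\kappa_i>0$ and write $v=\sum v_ie_i$. Then
\[
 \mathcal N_v=\sum_i\frac{v_i^2}{1+\kappa_i^2},
 \qquad
 \mathcal D_v=\sum_i\frac{\kappa_iv_i^2}{1+\kappa_i^2}.
\]
Each term of $\mathcal D_v$ equals $\kappa_i$ times the matching term of $\mathcal N_v$. It follows that $\kappa_{\min}\mathcal N_v\leq\mathcal D_v\leq\kappa_{\max}\mathcal N_v$, and dividing gives the stated two-sided bound.

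The only delicate step is the branch bookkeeping: justifying that the difference of the arctan sums equals the argument of the scalar factor, and not that argument shifted by a multiple of $2\pi$. I would handle it as in Lemma~\ref{lem:exact-Schur}. Both sides depend continuously on $T$ and vanish at $T=0$. The scalar factor never crosses the negative real axis, since it stays in the first quadrant. Therefore the lifted phases agree for all $T\geq0$.
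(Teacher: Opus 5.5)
Your proposal is correct and follows the paper's proof essentially step for step. Both use the matrix determinant lemma for $I+\sqrt{-1}(A+T\,v\otimes v)$, the identity $(I+\sqrt{-1}A)^{-1}=(I-\sqrt{-1}A)(I+A^2)^{-1}$, the observation that the scalar factor stays in the open first quadrant so the lifted phase has no $2\pi$ ambiguity, and the weighted-average argument in an eigenbasis for the bounds; your finite-$T$ formula $\arctan\bigl(T\mathcal N_v/(1+T\mathcal D_v)\bigr)$ just makes the paper's limiting argument explicit.
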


\begin{proof}
The matrix determinant lemma gives
\begin{equation}\label{eq:rank-one-det}
 \det(I+\sqrt{-1}(A+Tv\otimes v))
 =\det(I+\sqrt{-1} A)
 \left(1+\sqrt{-1} T v^T(I+\sqrt{-1} A)^{-1}v\right).
\end{equation}
Since
\[
 (I+\sqrt{-1} A)^{-1}=(I-\sqrt{-1} A)(I+A^2)^{-1},
\]
 take the continuous lift of the determinant phase along $T\geq0$, starting at $T=0$.  The last factor in \eqref{eq:rank-one-det} lies in the open first quadrant and has limiting argument
 $\arctan(\mathcal N_v/\mathcal D_v)$.  This proves \eqref{eq:rank-one-limit}.  In an eigenbasis of $A$,
\begin{equation*}
 \frac{\mathcal D_v}{\mathcal N_v}
 =\frac{\displaystyle\sum_i
 \frac{v_i^2\kappa_i}{1+\kappa_i^2}}
 {\displaystyle\sum_i\frac{v_i^2}{1+\kappa_i^2}},
\end{equation*}
which is a weighted average of the $\kappa_i$.  The bounds follow.
\end{proof}

Increasing the Euclidean double-normal Hessian entry changes the curvature matrix by a positive rank-one term.  Thus Lemma \ref{lem:rank-one} computes exactly $\mathcal B-F(A)$ for any chosen boundary normal.

\medskip
\noindent\emph{Step 3: restriction to a fixed ball.}

Choose $\rho>0$ so that
\begin{equation*}
 R_0+2\rho<R_1,
\end{equation*}
and set
\begin{equation*}
 \Om=B_\rho\bigl((R_0+\rho)e_1\bigr).
\end{equation*}
The geometry is summarized in Figure \ref{fig:fixed-ball}.
\begin{figure}[H]
\centering
\begin{tikzpicture}[scale=0.88,>=stealth]
 \coordinate (O) at (0,0);
 \coordinate (X) at (3,0);
 \coordinate (C) at (4.4,0);
 \draw[->] (-0.35,0)--(6.15,0) node[right] {$e_1$};
 \draw[gray,dashed] (X) arc[start angle=0,end angle=35,radius=3];
 \draw[gray,dashed] (X) arc[start angle=0,end angle=-35,radius=3];
 \node[gray] at (2.25,1.55) {$|x|=R_0$};
 \draw[thick] (C) circle (1.4);
 \draw[<->] (C)--(4.4,1.4) node[midway,right] {$\rho$};
 \node at (5.15,-0.55) {$\Omega$};
 \fill (O) circle (1.5pt) node[below=3pt] {$0$};
 \fill (X) circle (1.7pt) node[below left=3pt] {$x_0=R_0e_1$};
 \fill (C) circle (1.5pt);
\end{tikzpicture}
\caption{The fixed ball tangent to the sphere $|x|=R_0$ at $x_0$.}
\label{fig:fixed-ball}
\end{figure}
Then $\overline\Om\subset\{R_0\leq|x|\leq R_1\}$, and
\begin{equation*}
 x_0=R_0e_1
\end{equation*}
is the unique point of $\overline\Om$ with radius $R_0$.  The inward unit normal of $\partial\Om$ at $x_0$ is $e_1$, the radial direction.

The radial curvature $\kappa_r$ is decreasing in $r$, because
$\zeta_\delta$ is increasing and
\begin{equation*}
 \frac{d}{d\zeta}
 \tan\bigl(\theta-(n-1)\arctan\zeta\bigr)
 =-(n-1)\frac{1+\kappa_r^2}{1+\zeta^2}<0.
\end{equation*}
By Step 1, $\kappa_T=\zeta_\delta<\zeta_0$.  After reducing
$\delta_0$ if necessary,
\[
 \zeta_0<\cot\delta.
\]
Moreover, the monotonicity of $\kappa_r$ gives
$\kappa_r(r)\leq\kappa_r(R_0)=\cot\delta$.  Combining the radial and
tangential estimates yields
\begin{equation}\label{eq:kappa-max-radial}
 \max_i\kappa_i(x)\leq\cot\delta
 \qquad (x\in\overline\Om),
\end{equation}
with equality at $x_0$ in the radial direction.

At any boundary point, Lemma \ref{lem:rank-one} and
\eqref{eq:kappa-max-radial} give
\[
 \frac{\mathcal N_v}{\mathcal D_v}
 \geq\frac1{\kappa_{\max}}\geq\tan\delta.
\]
Hence
\begin{equation}\label{eq:gap-lower-radial}
 \mathcal B(x,u_{\delta,\nu}(x))-\theta
 \geq\arctan(\tan\delta)=\delta.
\end{equation}
At $x_0$, the escaping rank-one direction is the radial principal direction and $\kappa_r(x_0)=\cot\delta$.  Equality holds in \eqref{eq:gap-lower-radial}.  Hence
\begin{equation}\label{eq:gap-equality-radial}
 \delta_*(u_\delta)=\delta.
\end{equation}
Since we use the operator norm for symmetric matrices, \eqref{eq:kappa-max-radial} also gives
\begin{equation}\label{eq:A-equality-radial}
 \sup_{\overline\Om}|A[u_\delta]|=\cot\delta.
\end{equation}

\medskip
\noindent\emph{Step 4: uniform boundary data and mixed derivatives.}

It remains to verify that the quantities other than the double-normal derivative stay uniformly bounded.  Define
\begin{equation*}
 \eta(r)=\frac\pi2+(n-1)\arctan\zeta_\delta(r)-\theta.
\end{equation*}
Then $\eta(R_0)=\delta$, $\kappa_r=\cot\eta$, and the ODE gives
\begin{equation}\label{eq:eta-ODE}
 r\eta'
 =\frac{n-1}{1+\zeta_\delta^2}(\cot\eta-\zeta_\delta).
\end{equation}
 Choose $\bar\eta>0$ so small that, whenever $0<\eta\leq\bar\eta$
 and $\zeta\in[\zeta_*,\zeta_0]$, the right hand side of
 \eqref{eq:eta-ODE} is bounded above and below by fixed positive multiples
 of $1/\eta$.  Take $\delta_0<\bar\eta/2$ and then decrease $R_1-R_0$ so that
 \[
  \delta_0^2+2C(R_1-R_0)<\bar\eta^2,
 \]
 where $C$ is the upper comparison constant.  A first-exit argument, applied to $(\eta^2)'=2\eta\eta'$, shows that $\eta$ cannot reach $\bar\eta$ on $[R_0,R_1]$.  Consequently all remaining factors stay in fixed compact intervals and
\begin{equation*}
 \frac c\eta\leq\eta'\leq\frac C\eta.
\end{equation*}
Integration yields
\begin{equation*}
 c(\delta^2+r-R_0)
 \leq\eta(r)^2
 \leq C(\delta^2+r-R_0).
\end{equation*}
 Differentiating \eqref{eq:eta-ODE} once and using
 $\eta'\asymp\eta^{-1}$ gives $|\eta''|\leq C\eta^{-3}$.
 The $\zeta$-equation also gives
 $|\zeta_\delta'|\leq C\eta^{-1}$ and
 $|\zeta_\delta''|\leq C\eta^{-3}$.
 To make the derivative calculation explicit, put
 \[
  \Pi(r,\zeta)=(1-r^2\zeta^2)^{-3/2}.
 \]
 Condition \eqref{eq:R1-z0} keeps $\Pi$ and its first two derivatives
 uniformly bounded on the relevant compact set.  Since
 $f_\delta''=\Pi(r,\zeta_\delta)\cot\eta$, differentiation gives
 \[
  f_\delta'''=
  (\Pi_r+\Pi_\zeta\zeta_\delta')\cot\eta
  -\Pi\csc^2\eta\,\eta'.
 \]
 Using $\cot\eta\leq C\eta^{-1}$,
 $\csc^2\eta\leq C\eta^{-2}$, and the preceding estimates yields
 $|f_\delta'''|\leq C\eta^{-3}$.  Differentiating once more produces only
 terms containing bounded derivatives of $\Pi$ multiplied by
 $\zeta_\delta'$, $\zeta_\delta''$, $\eta'$, and $\eta''$; the worst terms
 have order $\eta^{-5}$.  Together with
 $\eta^2\asymp\delta^2+r-R_0$, this proves
\begin{align}
 |f_\delta''(r)|
 &\leq\frac C{\sqrt{\delta^2+r-R_0}},
 & |f_\delta'''(r)|
 &\leq\frac C{(\delta^2+r-R_0)^{3/2}},\notag\\
 |f_\delta^{(4)}(r)|
 &\leq\frac C{(\delta^2+r-R_0)^{5/2}}.
 \label{eq:f-derivative-radial-bounds}
\end{align}

Use local tangential coordinates $y\in\R^{n-1}$ on $\partial\Om$ near $x_0$.  The radius $r=r(y)$ satisfies
\begin{equation}\label{eq:r-y-geometry}
 r(y)-R_0\asymp|y|^2,
 \qquad |D_yr(y)|\leq C|y|.
\end{equation}
For completeness, the first three tangential derivatives of the boundary
trace are
\[
 \varphi_\delta(y)=f_\delta(r(y))
\]
and
\begin{align*}
 D\varphi_\delta
 &=f_\delta'Dr,\\
 D^2\varphi_\delta
 &=f_\delta''Dr\otimes Dr+f_\delta'D^2r,\\
D^3\varphi_\delta
&=f_\delta'''Dr^{\otimes3}
 +3f_\delta''\operatorname{sym}(D^2r\otimes Dr)
 +f_\delta'D^3r.
\end{align*}
Here $\operatorname{sym}$ denotes full symmetrization of the tensor product.
The derivatives of $r(y)$ of order at least two are uniformly bounded.
Using \eqref{eq:f-derivative-radial-bounds}, the only potentially singular
terms are bounded by
\begin{equation*}
 |f_\delta''|\,|D_yr|
 \leq C\frac{|y|}{\sqrt{\delta^2+|y|^2}}\leq C,
\end{equation*}
and
\begin{equation*}
 |f_\delta'''|\,|D_yr|^3
 \leq C\frac{|y|^3}{(\delta^2+|y|^2)^{3/2}}\leq C.
\end{equation*}
All other terms are easier, and points away from $x_0$ are uniformly regular.  Thus
\begin{equation*}
 \sup_{0<\delta<\delta_0}
 \|u_\delta|_{\partial\Om}\|_{C^3(\partial\Om)}<\infty.
\end{equation*}

Let $e_r=x/|x|$ be the radial unit vector.  The Euclidean radial Hessian is
\begin{equation*}
 D^2u_\delta
 =f_\delta''e_r\otimes e_r
 +\frac{f_\delta'}r(I-e_r\otimes e_r).
\end{equation*}
For a unit tangent vector $\xi$ and the inward normal $\nu$ of the fixed ball,
\begin{equation*}
 D^2u_\delta(\xi,\nu)
 =\left(f_\delta''-\frac{f_\delta'}r\right)
 (\xi\cdot e_r)(\nu\cdot e_r).
\end{equation*}
Near $x_0$, $|\xi\cdot e_r|\leq C|y|$.  Equations \eqref{eq:f-derivative-radial-bounds} and \eqref{eq:r-y-geometry} imply
\begin{equation*}
 \sup_{0<\delta<\delta_0}
 \sup_{x\in\partial\Om}
 \sup_{\substack{\xi\in T_x\partial\Om\\ |\xi|=1}}
 |D^2u_\delta(\xi,\nu)|<\infty.
\end{equation*}
The $C^1$ bound follows directly from
$r\zeta_\delta(r)\leq R_1\zeta_0<1$.

\medskip
\noindent\emph{Step 5: the sharp $C^4$ growth.}
Restrict to any tangential coordinate axis through $y=0$.  In the ball
geometry one has
\[
 r_y(0)=0,
 \qquad r_{yy}(0)=\frac1\rho+\frac1{R_0}>0.
\]
The fourth derivative satisfies
\begin{equation*}
 \partial_y^4\varphi_\delta(0)
 =3f_\delta''(R_0)r_{yy}(0)^2
   +f_\delta'(R_0)r_{yyyy}(0),
\end{equation*}
 and hence is bounded below by a positive multiple of $\delta^{-1}$.  Conversely, the fourth-order chain rule and \eqref{eq:r-y-geometry} reduce all possibly singular terms to
 \[
  |f_\delta^{(4)}|\,|D_yr|^4,
  \quad |f_\delta'''|\,|D_yr|^2,
  \quad |f_\delta''|\bigl(1+|D_yr|\bigr),
 \]
 each of which is bounded by $C\delta^{-1}$ using
 \eqref{eq:f-derivative-radial-bounds}.  Thus the boundary $C^4$ norm has
 exact order $\delta^{-1}$.  Equations \eqref{eq:f-second-blow},
 \eqref{eq:gap-equality-radial}, and \eqref{eq:A-equality-radial}, together
 with the uniform $C^1$, boundary $C^3$, and mixed derivative bounds proved
 above, establish all assertions of Theorem \ref{thm:radial-intro}.
\end{proof}

\begin{remark}\label{rem:annulus}
The same functions on the fixed annulus $\{R_0<|x|<R_1\}$ have constant boundary traces on each component, and therefore uniformly bounded boundary data in every $C^m$ norm.  The curvature and gap still satisfy $\max|A|=\cot\delta$ and $\min(\mathcal B-\theta)=\delta$ at the inner boundary.  This gives a simple geometric sharpness model, although the annulus is not a convex domain.
\end{remark}

\begin{remark}\label{rem:low-phase}
If $0<\theta<\pi/2$ and $u$ is convex, then $M\geq0$ on the boundary and
\[
 \mathcal B(x,u_\nu)\geq\frac\pi2.
\]
Therefore
\[
 \delta_*(u)\geq\frac\pi2-\theta>0.
\]
The convex double-normal gap cannot collapse in this range.  The high-phase radial family above begins exactly at $\theta=\pi/2$.
\end{remark}

The closed convex branch is essential here: a strict convex lower subsolution does not force every solution with the same boundary data to be strictly convex.  The following model gives an explicit obstruction.

\begin{equation}\label{eq:cylinder-problem}
 F(A[u])=\theta\quad\text{in }\Om,
 \qquad u=\varphi\quad\text{on }\partial\Om.
\end{equation}
For this problem, a \emph{strict lower subsolution} is a strictly convex function $\underline u$ satisfying $\underline u=\varphi$ on $\partial\Om$ and $F(A[\underline u])>\theta$ on $\overline\Om$.

\begin{proposition}\label{prop:cylinder}
There are smooth data $(\Om,\varphi,\theta)$, with $\Om\subset\R^2$ uniformly convex and $0<\theta<\pi/2$, for which a strict lower subsolution exists but the unique smooth convex solution of \eqref{eq:cylinder-problem} is not strictly convex.
\end{proposition}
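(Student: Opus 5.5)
The plan is to take a one-dimensional (cylindrical) solution in dimension two, whose Hessian has a kernel in one direction. Then we perturb it by a small multiple of the defining function of a small disk to get a strict lower subsolution. Fix $0<\theta<\pi/2$ and put $k=\tan\theta>0$, $R=1/k$. Let
\[
 g(x_1)=-\sqrt{R^2-x_1^2},\qquad u(x)=g(x_1),
\]
defined for $|x_1|<R$. The graph of $u$ is a piece of a round cylinder of radius $R$. Its principal curvatures are $\kappa_1=g''/(1+g'^2)^{3/2}=1/R=k$ and $\kappa_2=0$, so $F(A[u])=\arctan k+\arctan 0=\theta$. Moreover $D^2u=\diag(g'',0)\geq0$ is convex but has rank one everywhere, so $u$ is not strictly convex. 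We take $\Om=B_\rho(0)$ with $\rho<R/2$, which is uniformly convex, and set $\varphi=u|_{\partial\Om}$. On $\overline\Om$, $u$ is smooth with $|Du|\leq P$ for a fixed $P$.

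For the subsolution, let $\psi(x)=|x|^2-\rho^2$, so $\psi=0$ on $\partial\Om$ and $D^2\psi=2I$. Set $\underline u=u+\eps\psi$. Then $D^2\underline u=D^2u+2\eps I>0$, so $\underline u$ is strictly convex with boundary value $\varphi$. Expanding $\eps\mapsto F(A[u+\eps\psi])$ at $\eps=0$, the first derivative is the linearized operator applied to $\psi$:
\[
 a:D^2\psi+b\cdot D\psi=2\tr a+b\cdot D\psi\geq 2c_0-2\rho C_b,
\]
using \eqref{eq:trace-a-bounds} and \eqref{eq:b-bound} with constants depending only on $\theta$ and $P$. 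Since $P$ and hence $c_0$ and $C_b$ stay uniformly controlled as $\rho$ decreases (we keep $|x_1|\leq R/2$), we can choose $\rho$ small so that this is at least $c_0$. The second $\eps$-derivative is uniformly bounded on $\overline\Om$ for $\eps\in[0,1]$, since all jets stay in a compact region of finite curvature. Hence $F(A[\underline u])\geq\theta+c_0\eps/2$ on $\overline\Om$ for small $\eps>0$, and $\underline u$ is a strict lower subsolution.

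It remains to prove uniqueness of the smooth convex solution, which is the step needing care: Lemma \ref{lem:comparison} only handles strict subsolutions, and the operator depends on $Du$. Let $u,v$ be smooth convex solutions with the same boundary data, and set $u^\lambda=\lambda u+(1-\lambda)v$. Then
\[
 0=G_1(Du,D^2u)-G_1(Dv,D^2v)=\int_0^1\frac{d}{d\lambda}G_1(Du^\lambda,D^2u^\lambda)\,d\lambda
 =\tilde a^{ij}(u-v)_{ij}+\tilde b^k(u-v)_k.
\]
The jets $(Du^\lambda,D^2u^\lambda)$ have $D^2u^\lambda\geq0$ and lie in a compact set, so by \eqref{eq:a-formula} each $G_{1,X}$ is positive definite with bounded entries. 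Hence $\tilde a$ is uniformly elliptic on $\overline\Om$ and $\tilde b$ is bounded, and the classical maximum principle gives $u=v$. Therefore the cylinder $u$ is the unique smooth convex solution, and it is not strictly convex, which proves the proposition.
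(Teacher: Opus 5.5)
Your proposal is correct and follows the paper's proof: the same cylinder $-\sqrt{R^2-x_1^2}$, the same perturbation $u+\eps(|x|^2-\rho^2)$, and the same uniqueness argument (integrate the full linearization along the segment between two convex solutions, then apply the maximum principle). The only difference is how positivity of the first variation is checked. The paper computes it explicitly as $\frac{2s}{R(R^2+1)}(2R^2+1-4x_1^2)>0$ on $\overline B_1$ with $R>2$, which gives phases $\theta=\arctan(1/R)<\arctan\frac12$; you instead bound it by $2c_0-2\rho C_b$ via Lemma \ref{lem:coefficient-bounds} and shrink the ball, which covers every $\theta\in(0,\pi/2)$ (the paper's explicit formula shows that $\rho<R/2$ already suffices).
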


\begin{proof}
Let $\Om=B_1(0)\subset\R^2$, choose $R>2$, and define
\begin{equation*}
 u_0(x_1,x_2)=-\sqrt{R^2-x_1^2}.
\end{equation*}
Put $s=(R^2-x_1^2)^{1/2}$.  Then
\[
 Du_0=(x_1/s,0),
 \qquad
 D^2u_0=\diag(R^2/s^3,0).
\]
The graph is a piece of a circular cylinder.  Its principal curvatures are
\begin{equation*}
 \kappa_1=\frac1R,
 \qquad \kappa_2=0.
\end{equation*}
Thus $u_0$ is a smooth convex solution of \eqref{eq:cylinder-problem} with
\begin{equation*}
 \theta=\arctan(1/R),
 \qquad \varphi=u_0|_{\partial B_1}.
\end{equation*}

For $\eps>0$, set
\begin{equation*}
 \underline u_\eps=u_0+\eps(|x|^2-1).
\end{equation*}
 It has the same boundary value, satisfies $\underline u_\eps<u_0$ in $B_1$, and has $D^2\underline u_\eps>0$.  A direct differentiation of the graph phase at $\eps=0$ gives
\begin{equation*}
 \left.\frac{d}{d\eps}
 F(A[\underline u_\eps])\right|_{\eps=0}
 =\frac{2s}{R(R^2+1)}
 \bigl(2R^2+1-4x_1^2\bigr)>0
 \qquad\text{on }\overline B_1.
\end{equation*}
The positivity is uniform.  Consequently, for every sufficiently small
$\eps>0$, the function $\underline u_\eps$ is smooth and strictly convex,
and it is a strict lower subsolution because
\[
 F(A[\underline u_\eps])>\theta
 \quad\text{on }\overline B_1.
\]

It remains to note uniqueness on the convex branch.  If $v$ is another smooth convex solution with the same boundary value, integrate the full linearization of $G_1$ along
$u_s=(1-s)u_0+sv$.  With
\[
 \bar a^{ij}=\int_0^1G_{1,X_{ij}}(Du_s,D^2u_s)\,ds,
 \qquad
 \bar b^k=\int_0^1G_{1,p_k}(Du_s,D^2u_s)\,ds,
\]
the difference $v-u_0$ satisfies the linear equation
\[
 \bar a^{ij}(v-u_0)_{ij}+\bar b^k(v-u_0)_k=0
\]
with a uniformly positive definite coefficient matrix $\bar a$; smoothness and compactness supply uniform ellipticity.  The maximum principle and the zero boundary value give $v=u_0$.  Since $D^2u_0$ has rank one, the unique convex solution is not strictly convex.
\end{proof}

\begin{remark}\label{rem:rank-loss-consequence}
Proposition \ref{prop:cylinder} is independent of the boundary recession estimates.  It shows that an existence theorem which insists on strict convexity needs a separate rank-preservation or quantitative lower-curvature argument.  A uniform upper Hessian estimate, even together with a strict lower subsolution, does not provide such a lower bound.
\end{remark}

\appendix

\section{Four model situations satisfying (MRC)}
\label{app:MRC-examples}

We record four simple mechanisms which imply the mixed recession
condition.  They are intended to illustrate the meaning of
(MRC), rather than to provide the most general verification
criteria.

\begin{example}
	\label{ex:MRC-Hessian-endpoint}
	Consider the fixed operator
	\[
	G_0(p,X)=F(X)=\sum_{i=1}^n\arctan\lambda_i(X)
	\]
	on the unit ball $\Om=B_1(0)$, with zero boundary value.  Let
	\[
	\underline u(x)=\frac A2\bigl(|x|^2-1\bigr),
	\qquad
	A>\tan\frac{\theta}{n}.
	\]
	Then
	\[
	F(D^2\underline u)=n\arctan A>\theta.
	\]
	At $t=0$ one has $b=0$.  If $X\geq0$ and $F(X)=\theta$, the concavity
	of $F$ gives
	\[
	\cJ
	=F_X(X):(AI-X)
	\geq F(AI)-F(X)
	=n\arctan A-\theta
	=:\eps_0>0.
	\]
	Moreover, $\II_{\partial B_1}=I_T$, and hence
	\[
	g=-\ell=a_T:\II_{\partial B_1}=\tr a_T\geq0.
	\]
	Thus every recession pair satisfies
	\[
	j+\tau g\geq\eps_0
	\]
	for every $\tau>0$.  Hence the fixed-operator form of (MRC)
	holds.  This is the usual strict-subsolution mechanism without any
	gradient drift.
\end{example}

\begin{example}
	\label{ex:MRC-boundary-curvature}
	Suppose that, on the relevant level jets,
	\[
	\tr a\geq c_0>0,
	\qquad
	|b|\leq C_b,
	\]
	and assume that near $x_0$,
	\[
	\II_{\partial\Om}\geq\kappa_0 I_T,
	\qquad
	\kappa_0c_0>C_b.
	\]
	Along a doubly degenerate recession sequence,
	$a^{(m)}(Dd,Dd)\to0$.  Since $Dd(x_m)\to\nu(x_0)$ and $a^{(m)}$ is uniformly
	bounded,
	\[
	\tr(a^{(m)})_{T_{x_0}}
	=\tr a^{(m)}-a^{(m)}(\nu(x_0),\nu(x_0))
	\geq c_0-o(1).
	\]
	Consequently,
	\[
	\begin{aligned}
		g^{(m)}
		&=(a^{(m)})_{T_{x_0}}:\II_{\partial\Om}(x_0)
		-b^{(m)}\cdot\nu(x_0)+o(1)\\
		&\geq \kappa_0c_0-C_b-o(1).
	\end{aligned}
	\]
	Hence every $(j,g)\in\cR_{x_0}^*$ satisfies
	$g\geq\gamma_0$ for some $\gamma_0>0$.  Since $j$ is uniformly
	bounded below, choosing $\tau>0$ sufficiently large proves
	(MRC).
	
	In particular, for $\Om=B_R(0)$ one has $\kappa_0=R^{-1}$.
	Thus, for a fixed compact gradient set, every sufficiently small ball
	satisfying
	\[
	\frac{c_0}{R}>C_b
	\]
	has the required drift-corrected boundary convexity.
\end{example}

\begin{example}
	\label{ex:MRC-full-jet}
	Assume that there is $\eta>0$ such that
	\[
	G_t\bigl(p,D^2\underline u(x)\bigr)
	\geq\theta+\eta
	\]
	for all $x$ near $x_0$, $t\in[0,1]$, and $p\in K$.  Put
	\[
	\begin{aligned}
	 C_b&=\sup\bigl\{|G_{t,p}(p,X)|:
	 (t,p,X)\text{ is a level jet with }p\in K\bigr\},\\
	 D_0&=\sup_{\substack{x\ \mathrm{near}\ x_0\\ p\in K}}
	 |D\underline u(x)-p|.
	\end{aligned}
	\]
	and suppose that
	\[
	\eta>C_bD_0.
	\]
	For every level jet, concavity in the Hessian variable gives
	\[
	\begin{aligned}
		\cJ
		&=
		a:(D^2\underline u-X)
		+b\cdot(D\underline u-p)\\
		&\geq
		G_t(p,D^2\underline u)-G_t(p,X)-C_bD_0\\
		&\geq\eta-C_bD_0
		=:\eps_0>0.
	\end{aligned}
	\]
	The quantity $g=-\ell$ is uniformly bounded on the recession set.
	Therefore, after choosing $\tau>0$ sufficiently small,
	\[
	j+\tau g\geq\frac{\eps_0}{2}
	\qquad
	\text{for every }(j,g)\in\cR_{x_0}^*.
	\]
	Thus (MRC) holds.  This example shows that a sufficiently strong
	phase subsolution absorbs the possible loss caused by the gradient
	drift.
\end{example}

\begin{example}
	\label{ex:MRC-empty}
	Suppose that there are $R<\infty$ and $c>0$ such that every level jet
	near $x_0$ with $|X|\geq R$ satisfies at least one of
	\[
	a(Dd,Dd)\geq c,
	\qquad
	Q\geq c.
	\]
	Then no sequence can satisfy simultaneously
	\[
 |X_m|\longrightarrow\infty,
 \qquad
 a^{(m)}(Dd,Dd)\longrightarrow0,
 \qquad
 Q^{(m)}\longrightarrow0.
	\]
	Hence
	\[
	\cR_{x_0}^*=\varnothing.
	\]
	By the convention that the infimum over the empty set is $+\infty$,
	condition (MRC) holds automatically.  In this situation one of
	the two standard quadratic terms remains coercive, so no additional
	recession compatibility is needed.
\end{example}

\section*{Declarations}

\noindent\textbf{Data availability:} Data availability is not applicable to this article as no new data were created or analyzed in this study.

\medskip

\noindent\textbf{Conflict of interest:} The authors declare that they have no conflicts of interests.

\end{document}